\newcommand{\longhookrightarrow}{\mathrel{\lhook\joinrel\relbar\joinrel\rightarrow}}
\newcommand{\Op}{\mathrm{Op}}
\newcommand{\jbk}[1]{\left\langle {#1} \right\rangle}
\newcommand{\pprime}{{\prime\prime}}
\newcommand{\rmop}[1]{\mathop{\mathrm{#1}}}
\newcommand{\e}{\mathrm{e}}
\newcommand{\iu}{\mathrm{i}}
\newcommand{\df}{\mathrm{d}}
\newcommand{\nv}{\nu}
\renewcommand{\Re}{\rmop{Re}}
\newcommand{\p}{\partial}
\newcommand{\SL}[1][\partial\Omega]{\mathcal{S}_{#1}}
\newcommand{\NP}[1][\partial\Omega]{\mathcal{K}_{#1}}
\newcommand{\Cbb}{\mathbb{C}}
\newcommand{\Hbb}{\mathbb{H}}
\newcommand{\Nbb}{\mathbb{N}}
\newcommand{\Rbb}{\mathbb{R}}
\newcommand{\Zbb}{\mathbb{Z}}
\newcommand{\Dscr}{\mathscr{D}}
\newcommand{\Sscr}{\mathscr{S}}
\newcommand{\Acal}{\mathcal{A}}
\newcommand{\Bcal}{\mathcal{B}}
\newcommand{\Fcal}{\mathcal{F}}
\newcommand{\Hcal}{\mathcal{H}}
\newcommand{\Kcal}{\mathcal{K}}
\newcommand{\Lcal}{\mathcal{L}}
\newcommand{\Rcal}{\mathcal{R}}
\newcommand{\Scal}{\mathcal{S}}
\newcommand{\Tcal}{\mathcal{T}}
\newcommand{\Ucal}{\mathcal{U}}
\newcommand{\GS}{\Sigma}
\numberwithin{equation}{section}
\newtheorem{prop}{Proposition}[section]
\newtheorem{theo}[prop]{Theorem}
\newtheorem{coro}[prop]{Corollary}
\newtheorem{lemm}[prop]{Lemma}
\theoremstyle{definition}
\newtheorem*{note*}{Note}
\newtheorem*{claim*}{Claim}
\newtheorem*{exam*}{Example}
\newtheorem*{rema*}{Remark}
\newtheorem*{exer*}{Exercise}
\renewcommand{\nv}{n}
\renewcommand{\iu}{i}
\renewcommand{\e}{e}
\renewcommand{\df}{d}
\newcommand{\Kell}{\mathbf{K}}
\newcommand{\Ell}{\mathbf{E}}
\author{Shota Fukushima\thanks{Department of Mathematics and Institute of Applied Mathematics, Inha University, Incheon 22212, S. Korea. \\
E-mail: \nolinkurl{shota.fukushima.math@gmail.com} (S. Fukushima), \nolinkurl{hbkang@inha.ac.kr} (H. Kang)} \and Hyeonbae Kang\footnotemark[2]}
\title{Spectral structure of the Neumann-Poincar\'e operator on axially symmetric functions\thanks{This work was supported by NRF of S. Korea grant No. 2022R1A2B5B01001445.}}
\begin{document}

\maketitle

\begin{abstract}
    We consider the Neumann-Poincar\'e operator on a three-dimensional axially symmetric domain which is generated by rotating a planar domain around an axis which does not intersect the planar domain. We investigate its spectral structure when it is restricted to axially symmetric functions. If the boundary of the domain is smooth, we show that there are infinitely many axially symmetric eigenfunctions and derive Weyl-type asymptotics of the corresponding eigenvalues. We also derive the leading order terms of the asymptotic limits of positive and negative eigenvalues. The coefficients of the leading order terms are related to the convexity and concavity of the domain. If the boundary of the domain is less regular, we derive decay estimates of the eigenvalues. The decay rate depends on the regularity of the boundary. We also consider the domains with corners and prove that the essential spectrum of the Neumann-Poincar\'e operator on the axially symmetric three-dimensional domain is non-trivial and contains that of the planar domain.
\end{abstract}

\noindent{\footnotesize \textbf{Keywords. }Neumann-Poincar\'e operators, Spectrum, Axially symmetric domains, Pseudodifferential operators}

\noindent{\footnotesize \textbf{MSC 2020. }Primary 47A10; Secondary 47G10}

\tableofcontents

\section{Introduction}

This is a study of the spectral structure of the Neumann-Poincar\'e operator on axially symmetric domains when the operator is restricted on the axially symmetric functions. This study is motivated by the search of three-dimensional domains where the cloaking by anomalous localized resonance takes place.

Let $\Omega\subset \Rbb^d$ ($d=2, 3$) be a bounded Lipschitz domain. The Neumann-Poincar\'e operator $\NP^*$ (abbreviated to NP operator) is the integral operator on $\partial \Omega$ defined by
\begin{equation}\label{eq_nps_defi}
    \NP^*[f](X):=\frac{1}{\omega_d}\rmop{p.v.}\int_{\partial\Omega} \frac{(X-Y)\cdot \nv_X}{|X-Y|^d}f(Y)\, \df \sigma (Y) \quad (X\in \partial\Omega),
\end{equation}
where $\omega_d$ is the area of the unit sphere in $\Rbb^d$, namely, $2\pi$ if $d=2$ and $4\pi$ if $d=3$.
This operator is realized as a self-adjoint operator on $H^{-1/2}(\partial \Omega)$ ($L^2$-Sobolev space of order $-1/2$) by introducing an inner product defined in terms of the single layer potential (see \eqref{eq_inner_product}). Thus its spectrum consists of the essential spectrum and eigenvalues of finite multiplicities.

Many significant results on the spectral properties of the NP operator have been obtained in recent years. When the boundary $\partial\Omega$ is $C^{1,\alpha}$ for some $\alpha>0$, then $\NP^*$ is compact and the spectrum consists of eigenvalues accumulating at $0$. If $d=3$ and $\partial \Omega$ is $C^{2,\alpha}$, the Weyl-type asymptotics of the eigenvalue decay is obtained in \cite{Miyanishi22}. In particular, the decay rate is $j^{-1/2}$ when eigenvalues $\{\lambda_j=\lambda_j (\NP^*)\}$ are enumerated in such a way that $|\lambda_1| \ge |\lambda_2| \ge \ldots$. If $\partial \Omega$ is $C^\infty$, then the Weyl-type asymptotics of negative and positive eigenvalues are obtained in \cite{Miyanishi-Rozenblum19}. If $d=3$ and $\partial \Omega$ is $C^{1,\alpha}$, the decay rate of $j^{-\alpha/2+\varepsilon}$ for any $\varepsilon>0$ is proved in \cite{FKMdr}. It is shown in \cite{HP1, HP2} that some three-dimensional domains with Lipschitz boundaries have nontrivial essential spectrum. The spectral structure of the NP operator for the two-dimensional case is quite different from that for the three-dimensional case. It is proved in \cite{FKMdr} that if $\Omega$ is a bounded domain in $\Rbb^{2}$ with $C^{k, \alpha}$ boundary, then
    \begin{equation}\label{1-700}
        |\lambda_j (\NP^*)|=o(j^{-k+1 -\alpha+\varepsilon}) \quad (j\to \infty)
    \end{equation}
for any $\varepsilon>0$. We emphasize for comparison with the three-dimensional case that the decay gets faster as $k$ gets larger. It is proved in \cite{Perfekt-Putinar17} that if $\Omega$ is a curvilinear polygon with the interior angle(s) $\alpha_1, \ldots, \alpha_N\in (0, 2\pi)$ and 
\begin{equation}\label{eq_b}
    b:=\max \left\{ \left| \frac{1}{2}-\frac{\alpha_1}{2\pi}\right|, \ldots, \left| \frac{1}{2}-\frac{\alpha_N}{2\pi}\right|\right\}, 
\end{equation}
then
    \begin{equation}\label{1-800}
        \sigma_\mathrm{ess}(\NP^*) =[-b, b].
    \end{equation}

In this paper we deal with the NP operator on the axially symmetric domains which are generated by rotating a planar domain around an axis which does not intersect the domain. On such domains, the NP operator is decomposed in terms of the Fourier modes. We investigate the spectral properties of the zeroth mode operator which is the NP operator restricted to axially symmetric functions.

The investigation of this paper is strongly motivated by the study of the cloaking by anomalous localized resonance (CALR). To make the presentation short, we do not include in this paper the mathematical formulation and the physical meaning of CALR and simply refer to \cite{ACKLM13, MM} for them. It is known that CALR occurs on annuli \cite{ACKLM13, KLSW, MN_06} and ellipses \cite{AK} in two dimensions, and it does not occur on balls \cite{ACKLM13} and on strictly convex domains in three dimensions \cite{AKMN21}. Not a single three-dimensional domain where CALR occurs has been discovered. The occurrence (and nonoccurrence) of CALR is determined by two spectral properties of the NP operator: decay rate of eigenvalues and localization of surface plasmons which are single layer potentials of eigenfunctions. In three dimensions the decay rate of NP eigenvalues is $j^{-1/2}$ if the boundary of the domain is $C^{2,\alpha}$ as mentioned before. It is proved in \cite{AKMN21} that if the domain is strictly convex, then the single layer potentials of eigenfunctions decays too fast outside the surface resulting in nonoccurrence of CALR. In the same paper, it is observed by numerical computations that something different occurs on the torus. The single layer potentials of axially symmetric eigenfunctions are relatively large and  they show a clear wave pattern outside the domain different from that of non-axially symmetric eigenfunctions. Motivated by these observations related with CALR on three dimensions, we investigate, as the starting point of the investigation on the possibility of CALR on three-dimensional domains, the decay rate or asymptotic behavior of eigenvalues of the NP operator restricted to axially symmetric functions on tori or more generally on axially symmetric domains.

After rotation and translation if necessary, we assume with no loss of generality that the axis of the axially symmetric domain $\Omega\subset \Rbb^3$ is $x$-axis. Then $\Omega$ is of the form
\begin{equation}\label{Omega_def}
  \Omega=\{ (x, y\cos \varphi, y\sin \varphi)\in \Rbb^3 \mid (x, y)\in \Sigma, \ \varphi \in S^1 \}
\end{equation}
where $\Sigma\subset \Rbb^2$ is a bounded domain with the Lipschitz boundary such that $\overline{\Sigma}\subset \Rbb \times (0, \infty)$ and $S^1=\Rbb/2\pi \Zbb$ is the circle. For example, if $\Sigma$ is a disc, then $\Omega$ is a solid torus.
Figure \ref{fg_omega_torus} shows a typical example of the axially symmetric domain.

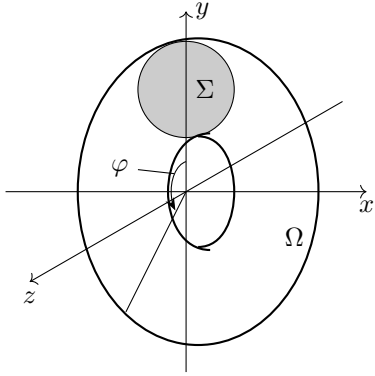
\begin{figure}[htb]
    \centering
    \tikzmath{\rad=0.8;}
\begin{tikzpicture}[scale=0.8]
    \coordinate (centerSigma) at (0, 1.7);
    \coordinate (centerTorus) at (0.2, 0);
    \node (x) at (3, 0) [below] {$x$};
    \node (y) at (0, 3) [right] {$y$};
    \node (z) at (-2.6, -1.5) [below] {$z$};
    \node (Omega) at (1.8, -0.75) {$\Omega$};
    \fill[black!20] (centerSigma) circle [radius=\rad];
    \draw (centerSigma) circle [radius=\rad];
    \node (Sigma) at (centerSigma) [right] {$\Sigma$};
    \draw[thick] (centerTorus) circle [x radius=2, y radius=2.55];
    \draw[thick] (0.4, 0.97) to [out=180, in=90] (-0.3, 0);
    \draw[thick] (0.4, -0.97) to [out=180, in=270] (-0.3, 0);
    \draw[thick] (0.2, 0.92) to [out=0, in=90] (0.8, 0);
    \draw[thick] (0.2, -0.92) to [out=0, in=270] (0.8, 0);
    \draw (0, 0)--(-1, -2);
    \draw[->, >=Stealth] (0, 0.5) arc [start angle=100, end angle=220, x radius=0.3, y radius=0.5];
    \node (eta) at (-0.8, 0.4) [left] {$\varphi$};
    \draw (eta)--(-0.2, 0.3);
    \draw[->] (-3, 0)--(3, 0);
    \draw[->] (0, -3)--(0, 3);
    \draw[->] (2.6, 1.5)--(-2.6, -1.5);
\end{tikzpicture}
\caption{The axially symmetric domain $\Omega$ generated by rotating $\Sigma$}
\label{fg_omega_torus}
\end{figure}

We say that a function $f$ on $\partial\Omega$ is axially symmetric if $f(x, y\cos \varphi, y\sin \varphi)$
is independent of $\varphi\in S^1$. We investigate the spectral properties of the NP operator $\NP^*$ on such functions.
We deal with not only domains with smooth boundaries but also Lipschitz boundaries in relation to the essential spectrum.

The first result of this paper is that there are infinitely many NP eigenvalues with axially symmetric eigenfunctions if the boundary of the domain is smooth enough as the following theorem states:

\begin{theo}\label{theo_self_adjoint_zeroth}
    If $\partial\Sigma$ is $C^{1, \alpha}$ for some $\alpha>0$, then $\NP^*$ has infinitely many real eigenvalues with axially symmetric eigenfunctions in $H^{-1/2}(\partial\Omega)$.
\end{theo}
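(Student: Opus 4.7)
My plan is as follows. Since $\partial\Sigma$ is $C^{1,\alpha}$ and $\overline\Sigma\subset\Rbb\times(0,\infty)$ avoids the axis of revolution, the surface of revolution $\partial\Omega$ is itself of class $C^{1,\alpha}$. Hence $\NP^*$ is compact on $H^{-1/2}(\partial\Omega)$ and self-adjoint with respect to the single-layer inner product. Since the $S^1$-action by rotations around the $x$-axis is an isometry of $\partial\Omega$ that commutes with $\NP^*$, the closed axially symmetric subspace $\Hcal_0\subset H^{-1/2}(\partial\Omega)$ is invariant, and $\Tcal:=\NP^*|_{\Hcal_0}$ is compact and self-adjoint. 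By the spectral theorem for compact self-adjoint operators, it therefore remains to show that $\Tcal$ has infinite rank.

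To this end I would realize $\Tcal$ as an explicit integral operator on the curve $\partial\Sigma$. Using coordinates $X=(x,\xi\cos\varphi,\xi\sin\varphi)$ and $Y=(y,\eta\cos\psi,\eta\sin\psi)$ with $(x,\xi),(y,\eta)\in\partial\Sigma$, the identities $(X-Y)\cdot\nu_X=(x-y)\nu^x+\nu^\perp(\xi-\eta\cos(\varphi-\psi))$, $|X-Y|^2=(x-y)^2+\xi^2+\eta^2-2\xi\eta\cos(\varphi-\psi)$, and $d\sigma(Y)=\eta\,ds(y,\eta)\,d\psi$ (where $(\nu^x,\nu^\perp)$ denotes the outward unit normal to $\partial\Sigma$ at $(x,\xi)$) reduce the definition of $\NP^*$ applied to $f(Y)=g(y,\eta)$ to
\begin{equation*}
\Tcal[g](x,\xi)=\int_{\partial\Sigma} K(x,\xi;y,\eta)\,g(y,\eta)\,ds(y,\eta),
\end{equation*}
where the angular integral defining $K$ can be evaluated in closed form through the complete elliptic integrals $\Kell$ and $\Ell$ of a modulus $k=k(x,\xi;y,\eta)\in(0,1)$ that tends to $1$ on the diagonal.

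The analytic crux is the diagonal expansion of $K$. Because $\Kell(k)\sim\log\frac{1}{1-k}$ as $k\uparrow 1$ and $1-k\asymp|(x,\xi)-(y,\eta)|^2$ near the diagonal, a careful local expansion yields a decomposition
\begin{equation*}
K(x,\xi;y,\eta)=c(x,\xi)\,\log\frac{1}{|(x,\xi)-(y,\eta)|}+\tilde K(x,\xi;y,\eta),
\end{equation*}
with an explicit, generically nonvanishing coefficient $c(x,\xi)$ and a remainder $\tilde K$ of strictly smaller singularity. Such a kernel cannot be that of a finite-rank operator: were $\Tcal$ of rank $N$ and self-adjoint, its Schwartz kernel would necessarily be a finite tensor sum $\sum_{j=1}^{N}\lambda_j\,e_j(x,\xi)\,\overline{e_j(y,\eta)}$ of $L^2\otimes L^2$ factors, whereas the logarithmic term generates the infinite-rank logarithmic potential on $\partial\Sigma$ and does not admit any such finite representation. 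Hence $\Tcal$ has infinite rank, and thus infinitely many nonzero real eigenvalues. The hardest step, I expect, is the singularity analysis of $K$ under only $C^{1,\alpha}$ regularity of $\partial\Sigma$: one must justify the log-expansion of the elliptic-integral kernel uniformly along $\partial\Sigma$ and, crucially, verify that the coefficient $c(x,\xi)$ is genuinely nonzero so that the logarithmic principal part is not annihilated by cancellations---a check that cannot appeal to the smooth pseudodifferential calculus available only for more regular boundaries.
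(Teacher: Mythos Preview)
Your reduction to a compact self-adjoint operator on the axially symmetric subspace is correct and essentially parallels the paper's approach. The paper carries out the same reduction via an explicit Fourier-mode decomposition, writing $\Ucal\NP^*\Ucal^{-1}[f\otimes e_k]=\Kcal_k^*[f]\otimes e_k$ and transferring the Plemelj relation $\SL\NP^*=\NP\SL$ to each mode to obtain self-adjointness of $\Kcal_0^*$ on $H^{-1/2}(\partial\Sigma)$ with respect to $\langle f,g\rangle_0:=-\langle\Scal_0 f,g\rangle$; compactness then follows from the $C^{1,\alpha}$ regularity. Your route via the $S^1$-action commuting with $\NP^*$ and $\SL$ reaches the same conclusion.

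The real divergence is what happens next. The paper's proof \emph{stops here}: once $\Kcal_0^*$ is compact and self-adjoint on the infinite-dimensional space $H^{-1/2}(\partial\Sigma)$, the spectral theorem furnishes an infinite orthonormal system $\{f_j\}$ of eigenfunctions, and the lifts $\Ucal^{-1}[f_j\otimes 1]$ are the desired axially symmetric eigenfunctions of $\NP^*$. No infinite-rank argument is made; eigenvalues are counted with multiplicity and $0$ is not excluded. Your elliptic-integral expansion, the logarithmic diagonal singularity, and the degenerate-kernel contradiction are all aimed at the stronger conclusion that $\Kcal_0^*$ has infinitely many \emph{nonzero} eigenvalues. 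The paper eventually establishes that via the singular-value asymptotics (Theorem~\ref{theo_eigenvalue_C1a_C2a}) and the Weyl law (Theorem~\ref{theo_np_ev_rot}), not as part of this theorem.

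If you nonetheless want to push through the infinite-rank argument here, two wrinkles deserve care. First, in the $C^{1,\alpha}$ regime with $\alpha<1$ the two-dimensional NP kernel $K^*_{\partial\Sigma}(p,p')$ on the generating curve is $O(|p-p'|^{-1+\alpha})$ and hence can be \emph{more} singular than the $\log|p-p'|$ term; the paper's decomposition $\Kcal_0^*=\Kcal_{\partial\Sigma}^*+\tfrac{v_p}{2y}\SL[\partial\Sigma]+\Rcal^*$ (with bounded remainder $R^*$) makes this explicit, so the logarithm is not the principal part in the sense you assume. Second, the step ``a finite-rank kernel is a finite tensor sum of $L^2$ factors, hence cannot match a singular kernel'' needs the eigenfunctions to be regular enough (e.g.\ continuous) to force continuity of the degenerate kernel and contradict the diagonal blow-up; this holds, but must be invoked rather than asserted. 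Neither point is fatal, but both explain why the paper defers the nontriviality of the spectrum to the later quantitative theorems.
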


We enumerate all eigenvalues $\{\rho_j(\NP^*)\}_{j=1}^\infty$ with axially symmetric eigenfunctions in the descending order
\[
    |\rho_1(\NP^*)|\geq |\rho_2(\NP^*)|\geq |\rho_3(\NP^*)|\geq \cdots \to 0.
\]
Then, we can estimate the decay rate of such eigenvalues from above. The decay rates are different depending on regularity of the boundary.
\begin{theo}
    \label{theo_eigenvalue_C1a_C2a}
    \begin{enumerate}[label=(\roman*)]
        \item \label{enum_eigenvalue_C1a}If $\partial\Sigma$ is $C^{1, \alpha}$ for some $\alpha>0$, then it holds that
        \begin{equation}\label{1-100}
            |\rho_j (\NP^*)|=o(j^{-\alpha+\varepsilon}) \quad (j\to \infty)
        \end{equation}
        for any $\varepsilon>0$.
        \item \label{enum_eigenvalue_C2a}If $\partial\Sigma$ is $C^{2, \alpha}$ for some $\alpha>0$, then it holds that
        \begin{equation}\label{1-200}
            |\rho_j (\NP^*)|=O(j^{-1}) \quad (j\to \infty).
        \end{equation}
    \end{enumerate}
\end{theo}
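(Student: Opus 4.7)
The plan is to exploit the axial symmetry to reduce the three-dimensional NP operator to a one-dimensional integral operator on $\partial\Sigma$, and then bound the decay of its singular values by the regularity of its kernel. Since $\Omega$ is invariant under rotations about the $x$-axis, $\NP^*$ commutes with such rotations and decomposes orthogonally into Fourier modes in $\varphi$. The axially symmetric functions are precisely those in the zeroth mode, so the restriction of $\NP^*$ to axially symmetric functions is unitarily equivalent to an operator $\Kcal_0^*$ on $\partial\Sigma$ whose kernel is obtained by integrating the angular variable out of \eqref{eq_nps_defi}: writing $P = (x, y)$, $P' = (x', y') \in \partial\Sigma$ and $\theta = \varphi - \varphi'$, the kernel has the form
\[
    K_0(P, P') = \frac{y'}{4\pi^2} \int_0^{2\pi} \frac{(x - x')\nv_x + y\nv_y - y'\nv_y \cos\theta}{\left[(x - x')^2 + y^2 + y'^2 - 2yy'\cos\theta\right]^{3/2}}\, \df\theta,
\]
where $(\nv_x, \nv_y)$ is the outward unit normal to $\partial\Sigma$ at $P$. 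This integral is expressible in terms of the complete elliptic integrals $\Kell$ and $\Ell$.

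The next step is to analyze the behavior of $K_0(P, P')$ as $|P - P'| \to 0$. When $|P - P'|$ is small, the denominator nearly vanishes at $\theta = 0$, producing a logarithmic singularity: one expects an expansion
\[
    K_0(P, P') = a(P, P') \log |P - P'| + b(P, P'),
\]
with $a$ and $b$ as regular as $\partial\Sigma$ permits. Thus $\Kcal_0^*$ is of logarithmic type, i.e., a pseudodifferential operator of order $-1$ on $\partial\Sigma$ modulo a smoother remainder. This is an important qualitative difference from the planar two-dimensional NP operator, whose kernel is only of order $0$; it explains why the exponent in \eqref{1-200} is capped at $1$.

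For \ref{enum_eigenvalue_C2a}, when $\partial\Sigma \in C^{2,\alpha}$ the coefficients $a$ and $b$ are regular enough that $\Kcal_0^*$ is bounded $L^2(\partial\Sigma) \to H^1(\partial\Sigma)$. Combined with Weyl's law for the embedding $H^1(\partial\Sigma) \hookrightarrow L^2(\partial\Sigma)$ on the compact one-dimensional manifold $\partial\Sigma$ (singular values decaying like $j^{-1}$), this yields \eqref{1-200}. For \ref{enum_eigenvalue_C1a}, under only $C^{1,\alpha}$ regularity, following the strategy used in \cite{FKMdr} for the planar analogue \eqref{1-700}, one shows instead $\Kcal_0^* : L^2(\partial\Sigma) \to H^{\alpha - \varepsilon/2}(\partial\Sigma)$ for every $\varepsilon > 0$, and the corresponding Sobolev embedding gives \eqref{1-100}.

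The main obstacle is the kernel analysis itself: extracting the logarithmic singularity from the angular integral above with sharp control of the regularity of $a$ and $b$ in terms of that of $\partial\Sigma$. The presence of complete elliptic integrals makes the expansion less transparent than in the planar case \eqref{1-700}, and under only $C^{1,\alpha}$ regularity no full pseudodifferential calculus is available, so the mapping properties of the remainder must be established by direct estimation rather than invoking a symbolic calculus.
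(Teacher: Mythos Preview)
Your reduction to the zeroth-mode operator $\Kcal_0^*$ on $\partial\Sigma$ and the identification of a logarithmic singularity in its kernel are correct and match the paper. Two points, however, differ from the paper's argument, and the first is a genuine gap.

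The kernel expansion you propose, $K_0 = a\log|P-P'| + b$ with $a,b$ ``as regular as $\partial\Sigma$ permits,'' is too coarse. The paper's key structural step is the \emph{three}-term splitting \eqref{eq_operator_decomposition},
\[
\Kcal_0^* \;=\; \Kcal_{\partial\Sigma}^* \;+\; \frac{v_p}{2y}\,\SL[\partial\Sigma] \;+\; \Rcal^*,
\]
which explicitly separates out the two-dimensional NP operator on the section; only $\Rcal^*$ has a bounded kernel. In the $C^{1,\alpha}$ case this matters: $K_{\partial\Sigma}^*(p,p')$ is of size $|p-p'|^{\alpha-1}$, a stronger singularity than the logarithm, and it is precisely this piece that drives the rate in \eqref{1-100}. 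Your remainder $b$ would absorb it, so $b$ is not regular and cannot be handled as you describe. The explicit separation is what allows the paper to invoke the Schatten estimate $\Kcal_{\partial\Sigma}^*\in\Scal^{p}$ for $p>1/(k+\alpha-1)$ from \cite{FKMdr} as a black box.

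The second difference is methodological. The paper does not argue via Sobolev mapping properties such as $\Kcal_0^*\colon L^2\to H^1$; it works in Schatten classes throughout. The remainder $\Rcal^*$ is placed in $\Scal^{2/3+\varepsilon}$ by the Delgado--Ruzhansky kernel criterion; the single-layer term is written (already under $C^{1,\alpha}$ regularity) as the order~$-1$ $\Psi$DO $\Op(-v_p/4y|\xi|)$ plus a Schatten remainder, and Grubb's non-smooth Weyl law gives $s_j\sim C_0 j^{-1}$ for the $\Psi$DO part. For \ref{enum_eigenvalue_C1a} all three pieces then lie in $\Scal^{1/\alpha+\varepsilon}$; for \ref{enum_eigenvalue_C2a} the non-$\Psi$DO pieces are $o(j^{-1})$ and the Ky Fan theorem gives $s_j(\Kcal_0^*)\sim C_0 j^{-1}$, whence $|\rho_j|=O(j^{-1})$. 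Your $L^2\to H^1$ route for \ref{enum_eigenvalue_C2a} may be salvageable, but it would require establishing that mapping for each of the three pieces separately (a Calder\'on--Zygmund bound for the derivative of the single layer, and a first-variable Lipschitz estimate for $\Rcal^*$, neither of which you address), and it offers no shortcut over the Schatten bookkeeping the paper uses.
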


The decay rate in \eqref{1-100} is like that of the NP eigenvalues on two-dimensional domains with $C^{1, \alpha}$ boundaries as described in \eqref{1-700}. This is so because the NP operator restricted to axially symmetric functions behaves like the NP operator on the section $\partial\Sigma$ on $C^{1, \alpha}$ domains. In fact, the NP operator restricted to axially symmetric functions is decomposed into three pieces: one is the NP operator on the section $\partial\Sigma$, another one is essentially the single layer potential on $\partial\Sigma$, and the last one is a remainder term which is smoothing (see \eqref{eq_operator_decomposition}). On $C^{1,\alpha}$-domain, the integral kernel of the NP operator on $\partial\Sigma$ has stronger singularity, namely, $|p-p^\prime|^{-1+\alpha}$, than that of the single layer potential on $\partial\Sigma$, namely, $\log |p-p^\prime|$. Thus, the NP operator on $\partial\Sigma$ is responsible for the spectral properties of the NP operator on $\partial\Omega$ yielding the decay rate \eqref{1-100}. If $\partial\Omega$ is $C^{2, \alpha}$, this relation becomes opposite and the principal term for the spectral properties is the single layer potential term. This difference causes the difference of the decay rates in \eqref{1-100} and \eqref{1-200}.

If $\partial\Omega$ is $C^\infty$, then we can improve the decay estimate in \eqref{1-200}. Moreover, we can find the leading order terms of the asymptotic behavior of the positive and negative eigenvalues. We enumerate all positive eigenvalues $\rho^+_j(\NP^*)>0$ and negative eigenvalues $-\rho^-_j(\NP^*)<0$ with axially symmetric eigenfunctions as
\[
    \rho^\pm_1(\NP^*)\geq \rho^\pm_2(\NP^*)\geq \rho^\pm_3(\NP^*)\geq \cdots.
\]
The following theorem exhibits the asymptotic behaviors of these eigenvalues. In what follows, we denote the outward unit normal vector at $p\in \partial\Sigma$ by ${\widetilde\nv}_p$. We then define
\begin{equation}\label{def_vp}
v_p:= -{\widetilde\nv}_{p,2}
\end{equation}
where ${\widetilde\nv}_{p,2}$ is the second component of ${\widetilde\nv}_p$. It is worth mentioning that $v_p$ is the first component of the unit tangential vector with the positive orientation at $p\in \partial\Sigma$.
\begin{theo}
    \label{theo_np_ev_rot}
    If $\partial\Sigma$ is $C^\infty$, then $\NP^*$ has infinitely many positive and negative eigenvalues with axially symmetric eigenfunctions. Moreover, the absolute values of eigenvalues and positive and negative eigenvalues have the asymptotic behaviors
    \begin{equation}
        \label{eq_asymptotic_rot}
        |\rho_j(\NP^*)|\sim C_0 j^{-1}, \quad
        \rho^\pm_j (\NP^*)\sim C^\pm_0 j^{-1}
    \end{equation}
    where $C_0$ and $C^\pm_0$ are defined by
    \begin{align}
        C^\pm_0 = \mp \frac{1}{4\pi}\int_{\{ p=(x,y) \in \partial\Sigma \mid \mp v_p >0 \}} \frac{v_p}{y}\, \df \sigma (p) \label{eq_Cpm0}
    \end{align}
    and
    \begin{equation}\label{eq_C0}
    C_0=C^+_0+C^-_0 .
    \end{equation}
\end{theo}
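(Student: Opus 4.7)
My plan is to reduce the theorem to a Weyl-type asymptotic for a signed classical pseudodifferential operator of order $-1$ on the compact one-dimensional manifold $\partial\Sigma$. The starting point is the operator decomposition \eqref{eq_operator_decomposition} established earlier, which writes the restriction of $\NP^*$ to axially symmetric functions as $\NP[\partial\Sigma]^* + T + R$: the planar NP operator on the section, an operator $T$ essentially given by the planar single layer potential $\SL[\partial\Sigma]$ weighted by factors arising from the rotational averaging, and a smoothing remainder $R$. When $\partial\Sigma$ is $C^\infty$, the planar NP operator $\NP[\partial\Sigma]^*$ is itself smoothing, which is the limiting case $k\to\infty$ of the estimate \eqref{1-700}, so both $\NP[\partial\Sigma]^*$ and $R$ contribute only eigenvalues decaying faster than any polynomial, and the $j^{-1}$ asymptotics is entirely controlled by $T$.

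The second and central step is the computation of the principal symbol of $T$ as an order $-1$ pseudodifferential operator on $\partial\Sigma$. Starting from the integral representation of $\NP^*[f]$ for axially symmetric $f$ at a point $(x, y\cos\varphi, y\sin\varphi)\in \partial\Omega$, I would carry out the integration in the rotation variable in terms of complete elliptic integrals, expand near the logarithmic singularity of the resulting reduced kernel on the diagonal $p=p'$, and combine this with the expansion of $(X-Y)\cdot n_X$. Isolating the part that survives as an order $-1$ pseudodifferential operator (as opposed to the stronger diagonal singularity that gets absorbed into $\NP[\partial\Sigma]^*$) should yield a principal symbol of the form
\begin{equation*}
    \sigma_{-1}(T)(p,\xi) = -\frac{v_p}{4\,y\,|\xi|},\qquad (p,\xi)\in T^*\partial\Sigma,
\end{equation*}
with $v_p$ as in \eqref{def_vp}. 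The essential feature is that this symbol is genuinely signed, its sign being controlled by the tangential component $v_p$, and this is what forces different leading coefficients for the positive and negative eigenvalues.

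Third, I would apply the Birman--Solomyak Weyl asymptotic theorem for self-adjoint classical pseudodifferential operators of negative order on a compact manifold, in the form used in \cite{Miyanishi-Rozenblum19}. For order $-1$ on a one-manifold this reads
\begin{equation*}
    N^\pm(\lambda;T) \sim \frac{1}{2\pi}\,\vol\bigl\{(p,\xi)\in T^*\partial\Sigma \mid \pm\sigma_{-1}(T)(p,\xi)>\lambda\bigr\},\qquad \lambda\to 0^+,
\end{equation*}
and performing the $\xi$-integration and restricting $p$ to $\{v_p<0\}$ (resp.\ $\{v_p>0\}$) reproduces exactly the constants $C^\pm_0$ in \eqref{eq_Cpm0}. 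Hence the positive and negative eigenvalues of $T$ satisfy $\mu^\pm_j(T)\sim C^\pm_0 j^{-1}$; and since $\NP^*|_{\text{axial}} - T = \NP[\partial\Sigma]^* + R$ has eigenvalues $o(j^{-N})$ for every $N$, the Ky Fan / min--max inequalities transfer the asymptotics to the full operator, yielding $\rho^\pm_j(\NP^*)\sim C^\pm_0 j^{-1}$. The combined statement $|\rho_j(\NP^*)|\sim C_0 j^{-1}$ then follows from $N(\lambda) = N^+(\lambda) + N^-(\lambda)$.

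The main obstacle will be the symbol computation in the second step: extracting the logarithmic singularity from the elliptic-integral representation of the rotationally reduced kernel, separating the regular piece that gets absorbed into $\NP[\partial\Sigma]^*$ from the signed piece driving the $j^{-1}$ asymptotics, and tracking both the sign $-v_p$ and the coefficient $1/(4y)$ precisely enough that the Birman--Solomyak formula reproduces \eqref{eq_Cpm0} verbatim. A priori it is not obvious that a tangential quantity like $v_p$ should appear in the leading coefficient rather than a purely $y$-dependent weight; its emergence is the core of the phenomenon and requires a careful analysis of how the numerator $(X-Y)\cdot n_X$ interacts with the integration in $\varphi$.
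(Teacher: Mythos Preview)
Your overall strategy---reducing to a Birman--Solomyak Weyl law for an order $-1$ classical $\Psi$DO on $\partial\Sigma$ with principal symbol $-v_p/(4y|\xi|)$---matches the paper's, and your identification of that symbol is correct. There is, however, a genuine gap where you invoke Birman--Solomyak: the theorem requires a \emph{self-adjoint} $\Psi$DO on $L^2(\partial\Sigma)$, and neither your operator $T=(v_p/2y)\SL[\partial\Sigma]$ nor the full zeroth-mode operator $\Kcal_0^*$ is self-adjoint there. The kernel of $T$ is visibly asymmetric because the factor $v_p/y$ depends only on the first variable, and $\Kcal_0^*$ is self-adjoint only on $H^{-1/2}(\partial\Sigma)$ equipped with the inner product $\langle\cdot,\cdot\rangle_0$. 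Consequently your sentence ``the positive and negative eigenvalues of $T$ satisfy $\mu^\pm_j(T)\sim C^\pm_0 j^{-1}$'' is not justified as written: $T$ need not have real eigenvalues at all, and the subsequent Ky Fan/min--max transfer of \emph{signed} eigenvalue asymptotics requires both operators to be self-adjoint on the same Hilbert space.

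The paper closes this gap with a symmetrization. Since $-\Scal_0$ is positive on $L^2(\partial\Sigma)$, one forms $\widehat{\Kcal}_0:=(-\Scal_0)^{1/2}\Kcal_0^*(-\Scal_0)^{-1/2}$; the Plemelj-type relation $\Scal_0\Kcal_0^*=\Kcal_0\Scal_0$ makes $\widehat{\Kcal}_0$ genuinely self-adjoint on $L^2$, it is unitarily equivalent to $\Kcal_0^*$ acting on $\Hcal_0^*$ and hence has the same (real) eigenvalues, and by symbol calculus it keeps the principal symbol $-v_p/(4y|\xi|)$ because $(-\Scal_0)^{\pm 1/2}$ are $\Psi$DOs of order $\mp 1/2$ with scalar principal symbols. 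Birman--Solomyak then applies directly to $\widehat{\Kcal}_0$, with no perturbative transfer step needed. Inserting this symmetrization where you invoke the Weyl law would complete your argument.
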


To clarify the geometric meaning of $C^\pm_0$, let us assume that $\GS$ is convex. Because of \eqref{def_vp}, if $v_p>0$, then ${\widetilde\nv}_p$ is downward, and hence $(x, y\cos \varphi, y\sin \varphi)$ is a concave point on $\partial\Omega$. So, $C^-_0$ is the integration over the concave part of $\GS$; $C^+_0$ over the convex part. The connection between negative NP eigenvalues and concavity of the domain has been discovered in \cite{JK, Miyanishi-Rozenblum19}.

Since $C^\pm_0>0$, we infer that there are infinitely many both positive and negative NP eigenvalues with axially symmetric eigenfunctions. Existence of infinitely many negative eigenvalues on tori is proved in \cite{AJKKM19} by considering non-axially symmetric functions and employing the stationary phase method for the high oscillation limit with respect to the $\varphi$-direction. Theorem \ref{theo_np_ev_rot} shows that the NP operator on the axially symmetric functions, which do not oscillate in the $\varphi$-direction, have infinitely many negative eigenvalues. More general sufficient condition for the existence of infinitely many negative NP eigenvalues is given in \cite{Miyanishi-Rozenblum19}: if there has a point on the boundary where at least one principal curvature is positive, then the NP operator has infinitely many negative eigenvalues. In fact, our strategy to the proof of Theorem \ref{theo_np_ev_rot} follows that in \cite{Miyanishi-Rozenblum19}, namely, calculus of pseudodifferential operators and invoking the result of \cite{Birman-Solomjak77}.

It is interesting to observe that the difference $C_0^- - C_0^+$ is the volume of $\Sigma$ in the hyperbolic space $\Hbb^2:=(\Rbb\times (0, \infty), (\df x^2+\df y^2)/y^2)$. In fact, it follows from \eqref{eq_Cpm0} and the Stokes theorem that
\[
C_0^- - C_0^+ = \frac{1}{4\pi}\int_{\partial\Sigma} \frac{\df x}{y} = \frac{1}{4\pi}\int_\Sigma \frac{\df x \df y}{y^2}.
\]
In particular, we have $C_0^+ < C_0^-$.

We also consider the case when $\partial\Sigma$ is Lipschitz. In this case, the NP operator $\NP[\partial\Sigma]^*$ on $\partial\Sigma$ may have a nontrivial essential spectrum. We prove an inclusion relation of essential spectra of NP operators on $\partial\Omega$ and $\partial\Sigma$.

\begin{theo}
    \label{theo_essential_spectrum}
    If $\partial\Sigma$ is Lipschitz, then the inclusion relation
    \begin{equation}
        \label{eq_essential_spectrum_inclusion}
        \sigma_\mathrm{ess}(\NP[\partial\Sigma]^*, H^{-1/2}(\partial\Sigma))
        \subset \sigma_\mathrm{ess}(\NP^*, H^{-1/2}(\partial\Omega))
    \end{equation}
    holds.
\end{theo}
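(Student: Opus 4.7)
My plan is to use the decomposition \eqref{eq_operator_decomposition} to compare the essential spectrum of $\NP^*$ restricted to the axially symmetric subspace with that of $\NP[\partial\Sigma]^*$, and then to exploit self-adjointness of $\NP^*$ to obtain the essential spectrum inclusion. The overall argument rests on three ingredients: rotational invariance of the problem, compactness of the lower-order terms in the decomposition, and standard operator theory for invariant subspaces of self-adjoint operators.

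First, the closed subspace $H^{-1/2}_\mathrm{sym}(\partial\Omega) \subset H^{-1/2}(\partial\Omega)$ of axially symmetric distributions can be identified with a suitable space of distributions on $\partial\Sigma$ via the map $f(x,y) \mapsto f(x, y\cos\varphi, y\sin\varphi)$. Because rotation about the $x$-axis preserves both the NP kernel and the single-layer inner product \eqref{eq_inner_product}, $H^{-1/2}_\mathrm{sym}(\partial\Omega)$ and its orthogonal complement are both $\NP^*$-invariant. Under this identification, \eqref{eq_operator_decomposition} takes the form
\begin{equation*}
\NP^*|_{H^{-1/2}_\mathrm{sym}(\partial\Omega)} = \NP[\partial\Sigma]^* + S + R,
\end{equation*}
where $S$ behaves like the logarithmic single layer potential on $\partial\Sigma$ and $R$ is a smoothing remainder.

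Next I would verify that both $S$ and $R$ are compact on $H^{-1/2}(\partial\Sigma)$. Since $R$ is smoothing, it maps $H^{-1/2}(\partial\Sigma)$ into $H^s(\partial\Sigma)$ for some $s > -1/2$, and Rellich's compact embedding then gives compactness on $H^{-1/2}(\partial\Sigma)$. For $S$, the two-dimensional logarithmic single layer potential maps $H^{-1/2}(\partial\Sigma)$ boundedly into $H^{1/2}(\partial\Sigma)$ even on a Lipschitz curve, and composing with the compact embedding $H^{1/2}(\partial\Sigma) \hookrightarrow H^{-1/2}(\partial\Sigma)$ yields compactness. By Weyl's theorem on stability of the essential spectrum under compact perturbations,
\begin{equation*}
\sigma_\mathrm{ess}(\NP^*|_{H^{-1/2}_\mathrm{sym}(\partial\Omega)}) = \sigma_\mathrm{ess}(\NP[\partial\Sigma]^*, H^{-1/2}(\partial\Sigma)).
\end{equation*}
Since $\NP^*$ decomposes as the orthogonal direct sum of its restrictions to $H^{-1/2}_\mathrm{sym}(\partial\Omega)$ and its orthogonal complement, we have the elementary inclusion $\sigma_\mathrm{ess}(\NP^*|_{H^{-1/2}_\mathrm{sym}(\partial\Omega)}) \subset \sigma_\mathrm{ess}(\NP^*, H^{-1/2}(\partial\Omega))$, and chaining with the previous display produces \eqref{eq_essential_spectrum_inclusion}.

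The main obstacle will be making the identification between axially symmetric $H^{-1/2}$-distributions on $\partial\Omega$ and the standard $H^{-1/2}$-space on $\partial\Sigma$ precise: the polar coordinates introduce a weight essentially equal to $y$, and one has to check that the resulting weighted norm is equivalent to the unweighted $H^{-1/2}(\partial\Sigma)$-norm. The standing assumption $\overline{\Sigma} \subset \Rbb \times (0,\infty)$, which keeps $y$ bounded away from $0$, is crucial here. One must also confirm that the remainder $R$ in \eqref{eq_operator_decomposition} retains its smoothing property in the Lipschitz regularity regime of the statement, with smoothing order that is at least strictly positive so that Rellich's theorem yields compactness.
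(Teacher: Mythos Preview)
Your approach is essentially the paper's: use the decomposition \eqref{eq_operator_decomposition}, verify that $(v_p/2y)\SL[\partial\Sigma]$ and $\Rcal^*$ are compact on $H^{-1/2}(\partial\Sigma)$ (the paper does this in Corollary~\ref{coro_rs_lipschitz} and in the proof of Theorem~\ref{theo_np_np0_ess_spec}), conclude $\sigma_\mathrm{ess}(\Kcal_0^*)=\sigma_\mathrm{ess}(\Kcal_{\partial\Sigma}^*)$, and then pass from the zeroth mode to the full operator via the invariant axially symmetric subspace (the paper writes this last step out with Weyl sequences).

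One point you skate over and that the paper addresses explicitly: $\Kcal_0^*$ is self-adjoint with respect to the inner product $\jbk{\cdot,\cdot}_0$ coming from $\Scal_0$, whereas $\Kcal_{\partial\Sigma}^*$ is self-adjoint with respect to $\jbk{\cdot,\cdot}_{*,\partial\Sigma}$ coming from $\SL[\partial\Sigma]$. These are two \emph{different} (though equivalent) inner products on $H^{-1/2}(\partial\Sigma)$, so the textbook Weyl theorem for self-adjoint operators does not literally apply. The paper isolates this as a separate lemma (Proposition~\ref{lemm_weyl_variant}): if $\Acal_1$, $\Acal_2$ are bounded on a Banach space, each self-adjoint for its own equivalent inner product, and $\Acal_1-\Acal_2$ is compact, then their essential spectra agree. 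You can alternatively argue via the Fredholm characterization of the essential spectrum, which is a Banach-space notion stable under compact perturbations and coincides with the self-adjoint notion for each operator separately; either way this step deserves a sentence.
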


Theorem \ref{theo_essential_spectrum} gives examples of three-dimensional domains with non-trivial essential spectra. For instance, we apply Theorem \ref{theo_essential_spectrum} together with \eqref{1-800} of \cite{Perfekt-Putinar17} to obtain the following corollary:

\begin{coro}
    \label{coro_curvilinear_polygon}
    If $\partial\Sigma$ is a $C^2$-smooth curvilinear polygon with the interior angle(s) $\alpha_1, \ldots, \alpha_N \in (0, 2\pi)$ and define $b>0$ by \eqref{eq_b}, then
    \begin{equation}
        \label{eq_essential_spectrum_curvilinear_polygon}
        [-b, b]
        \subset \sigma_\mathrm{ess}(\NP^*, H^{-1/2}(\partial\Omega)).
    \end{equation}
\end{coro}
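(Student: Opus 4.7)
The plan is very short: the corollary should follow by simply chaining Theorem \ref{theo_essential_spectrum} with the Perfekt--Putinar identity \eqref{1-800}. Concretely, I would first apply \eqref{1-800} to the planar curvilinear polygon $\Sigma$ with interior angles $\alpha_1,\ldots,\alpha_N\in(0,2\pi)$ and the constant $b$ defined by \eqref{eq_b}, which gives
\begin{equation}
\sigma_{\mathrm{ess}}(\NP[\partial\Sigma]^*, H^{-1/2}(\partial\Sigma))=[-b,b].
\end{equation}
Then I would invoke Theorem \ref{theo_essential_spectrum}, whose inclusion \eqref{eq_essential_spectrum_inclusion} transports the left-hand side into $\sigma_{\mathrm{ess}}(\NP^*,H^{-1/2}(\partial\Omega))$, yielding \eqref{eq_essential_spectrum_curvilinear_polygon}.

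Before concatenating the two results, I would carry out two routine compatibility checks. First, I need to confirm that a $C^2$-smooth curvilinear polygon in the sense used here qualifies as a curvilinear polygon in the sense of \cite{Perfekt-Putinar17}, i.e.\ a finitely connected domain whose boundary is a finite union of $C^2$ arcs meeting at finitely many corners with interior angles in $(0,2\pi)$; this is immediate from the hypothesis. Second, I need to verify that the Lipschitz regularity assumption of Theorem \ref{theo_essential_spectrum} is satisfied by $\partial\Sigma$; since a curvilinear polygon with interior angles strictly less than $2\pi$ (and strictly greater than $0$) is a Lipschitz curve, this is also automatic.

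There is essentially no obstacle in this argument beyond bookkeeping: the corollary is a direct combination of two previously established facts, and no new analytic estimate enters. The only place where a subtlety could in principle arise is in confirming that the functional-analytic setting $H^{-1/2}$ in \eqref{1-800} matches the one used in Theorem \ref{theo_essential_spectrum}, but since both formulations realize $\NP^*$ as a self-adjoint operator on $H^{-1/2}$ with the single layer inner product, the essential spectra coincide with those considered here and the inclusion transfers verbatim.
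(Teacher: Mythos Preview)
Your proposal is correct and matches the paper's own argument exactly: the paper states this corollary as an immediate consequence of combining Theorem \ref{theo_essential_spectrum} with the identity \eqref{1-800} from \cite{Perfekt-Putinar17}, and does not provide any further proof. Your additional compatibility checks (that a $C^2$ curvilinear polygon with angles in $(0,2\pi)$ is Lipschitz and falls within the scope of \cite{Perfekt-Putinar17}) are routine and correct.
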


A few remarks on the inner product and the symmetrization of the NP operator are in order. The Sobolev space $H^{-1/2}(\partial\Omega)$ is equipped with the inner product
\begin{equation}
    \label{eq_inner_product}
    \jbk{f, g}_*:=-\int_{\partial\Omega} f(X){\SL} [g](X)\, \df \sigma (X),
\end{equation}
where ${\SL}$ is the single layer potential, namely,
\begin{equation}
    \label{eq_single_layer}
    {\SL} [f](X):= \int_{\partial\Omega} \Gamma(X-Y) f(Y)\, \df \sigma (Y),
\end{equation}
where $\Gamma(X)$ is the fundamental solution to the Laplacian: it is $(2\pi)^{-1}\log |X|$ if $d=2$ and $-(4\pi |X|)^{-1}$ if $d=3$. In three dimensions $\SL$ is invertible as acting from $H^{-1/2}(\partial\Omega)$ onto $H^{1/2}(\partial\Omega)$, and hence $\jbk{\cdot , \cdot }_*$ is actually an inner product. However, in two dimensions, there is a domain $\Omega$ such that $\SL$ has a non-trivial kernel. But, in such a case, if we dilate the domain, the single layer potential on dilated domains are invertible. Since the NP operator is invariant under the dilation of domains, we assume without loss of generality that $\jbk{\cdot , \cdot }_*$ is actually an inner product on $H^{-1/2}(\partial\Omega)$.

The NP operator $\NP^*$ on a bounded Lipschitz domain is realized as a self-adjoint operator on $H^{-1/2}(\partial\Omega)$ equipped with the inner product $\jbk{\ , \ }_*$. It is a consequence of the Plemelj's symmetrization principle
\begin{equation}\label{eq_Plemelj}
    {\SL}\NP^*=\NP {\SL},
\end{equation}
where $\NP$ is $L^2$-adjoint of $\NP^*$, namely,
\begin{equation}\label{eq_np_defi}
    {\NP} [f](X):=-\frac{1}{\omega_d}\rmop{p.v.}\int_{\partial\Omega} \frac{(X-Y)\cdot \nv_Y}{|X-Y|^d}f(Y)\, \df \sigma (Y) \quad (X\in \partial\Omega),
\end{equation}
(see \cite{KPS07} for example). This operator is also called NP operator.

Finally, we emphasize that the subject of this paper has no intersection with that of \cite{Ji-Kang23MA} whose title is somewhat similar to this one; there the spectral structure of $m$-fold symmetric domains is studied. A two-dimensional domain is $m$-fold symmetric where $m\geq 2$ is an integer if it is invariant under the rotation by the angle $2\pi/m$.

This paper is organized as follows. In Section \ref{sec:decom}, we decompose the NP operator into Fourier modes. Section \ref{sec:zeroth} is for analysis of the zeroth mode operator which is the NP operator on axially symmetric functions.
Sections of the rest are to prove theorems presented in Introduction. Appendix \ref{sec_proof_elliptic_behavior} is to prove asymptotic formulas for certain elliptic integrals which appear in the course of proofs.

Throughout this paper, notation $A\lesssim B$ means that there exists a constant $C>0$ such that $A\leq CB$. The meaning of $A\gtrsim B$ is analogous, and $A \approx B$ means both $A\lesssim B$ and $A\gtrsim B$ hold.

\section{Decomposition into modes}\label{sec:decom}

\subsection{Decomposition of the NP operator}
Let $\Omega$ be the axially symmetric domain defined by \eqref{Omega_def}. Consider the diffeomorphism
\[
    \Psi: \partial \Sigma \times S^1 \longrightarrow \partial\Omega, \quad (p, \varphi) \longmapsto X=(x, y\cos \varphi, y\sin \varphi) \ \ (p=(x, y)),
\]
and the linear operator
\begin{equation}\label{eq_defi_j}
    \Ucal[f](p, \varphi):=y^{1/2}f(\Psi(p, \varphi)) \quad (p=(x, y)\in \partial\Sigma, \, \varphi\in S^1)
\end{equation}
for $f: \partial\Omega\to \Cbb$. Since the surface element $\df \sigma(X)$ on $\p\Omega$ is given by
\begin{equation}\label{dsigma}
\df \sigma(X) = y\,\df \sigma (p) \df \varphi \quad (X=\Psi (p, \varphi)),
\end{equation}
where $\df \sigma$ in the right hand side is the line element on $\p\Sigma$, the operator $\Ucal: L^2(\partial\Omega)\to L^2(\partial\Sigma \times S^1)$ is a unitary operator, that is, an isomorphism with the isometric property
\begin{equation}\label{eq_J_unitary}
    \int_{\partial\Omega} f(X) \overline{g(X)} \, \df \sigma(X)
    =\int_{S^1} \int_{\partial\Sigma} \Ucal [f](p, \varphi) \overline{\Ucal [g](p, \varphi)} \, \df \sigma (p) \df \varphi
\end{equation}
for all $f, g \in L^2(\partial\Omega)$. Moreover, $\Ucal$ is a bounded invertible operator from $H^s (\partial\Omega)$ onto $H^s (\partial\Sigma \times S^1)$ for all $s\in [-1, 1]$. This can be seen by direct computations for $s=1$, by duality for $s=-1$, and by interpolation for $s \in (-1,1)$. Here and throughout the paper the circle $S^1$ is identified with $[-\pi, \pi)$.

We introduce functions
\begin{equation}\label{eq_AB}
    \begin{aligned}
        A_k(\delta):&=\delta^2 \int_0^{\pi/2} \frac{\cos (2k\varphi)}{(\delta^2+\sin^2 \varphi)^{3/2}}\, \df \varphi, \\
        B_k(\delta):&=\int_0^{\pi/2}\frac{\cos (2k\varphi)\sin^2 \varphi}{(\delta^2+\sin^2 \varphi)^{3/2}}\, \df \varphi,
    \end{aligned}
    \quad (\delta>0)
\end{equation}
for $k=0,1,2, \ldots$, and the distance-like quantity
\begin{equation}\label{eq_delta}
    \delta (p, p^\prime):=\frac{|p-p^\prime|}{2(yy^\prime)^{1/2}} \quad (p=(x, y), \ p^\prime =(x^\prime, y^\prime)\in \partial\Sigma).
\end{equation}

We will use the following lemma in the further part of this paper. 

\begin{lemm}\label{lemm_triangular_like}
    For any compact subset $K\subset \Rbb \times (0, \infty)$, there exists a constant $C>0$ such that the inequality
    \begin{equation}
        \label{eq_triangular_like}
        C^{-1}|\delta (p, r)-\delta (r, q)|\leq \delta (p, q)\leq C(\delta (p, r)+\delta (r, q))
    \end{equation}
    holds for all $p, q, r\in K$.
\end{lemm}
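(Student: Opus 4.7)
The plan is to reduce the statement to the ordinary Euclidean triangle inequality by exploiting compactness of $K$. Since $K \subset \Rbb \times (0,\infty)$ is compact, the second coordinate $y$ of any point in $K$ lies in an interval $[y_{\min}, y_{\max}]$ with $0 < y_{\min} \leq y_{\max} < \infty$. Consequently the normalizing factor $2(yy^\prime)^{1/2}$ appearing in the denominator of \eqref{eq_delta} is bounded above by $2y_{\max}$ and below by $2y_{\min}$ for every pair of points in $K$, so
\[
    \frac{|p-q|}{2y_{\max}} \leq \delta(p,q) \leq \frac{|p-q|}{2y_{\min}} \quad (p, q\in K).
\]
Thus $\delta$ is comparable to the Euclidean distance uniformly on $K$, and the right-hand inequality in \eqref{eq_triangular_like} follows immediately from this equivalence together with $|p-q| \leq |p-r|+|r-q|$.

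For the left-hand (reverse-triangle) inequality, the minor nuisance is that the three $\delta$-quantities have different normalizing denominators. I would split the difference as
\[
    \delta(p,r) - \delta(r,q)
    = \frac{|p-r|-|r-q|}{2\sqrt{y_p y_r}}
    + |r-q|\left(\frac{1}{2\sqrt{y_p y_r}} - \frac{1}{2\sqrt{y_r y_q}}\right).
\]
The first summand is controlled by $|p-q|/(2\sqrt{y_p y_r})$ thanks to the Euclidean reverse triangle inequality, hence by a $K$-dependent constant times $\delta(p,q)$. For the second, the identity $\sqrt{y_q}-\sqrt{y_p} = (y_q-y_p)/(\sqrt{y_q}+\sqrt{y_p})$ together with $|y_q-y_p| \leq |p-q|$, the uniform positive lower bound on $y_p, y_q, y_r$, and the boundedness $|r-q| \leq \mathrm{diam}(K)$ yields a bound of the form (constant)$\cdot |p-q|$, again comparable to $\delta(p,q)$. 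Combining the two pieces produces the desired constant.

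There is essentially no obstacle: the content of the lemma is that on a compact subset of the open upper half-plane, the weighted quantity $\delta$ behaves like the Euclidean distance, only with $K$-dependent distortion constants reflecting how close $K$ comes to the axis $\{y=0\}$. (Equivalently, $\delta(p,q) = \sinh(d_{\Hbb^2}(p,q)/2)$ for the hyperbolic distance $d_{\Hbb^2}$ on $\Rbb\times(0,\infty)$, so \eqref{eq_triangular_like} is just the triangle inequality for $d_{\Hbb^2}$ transported through the bi-Lipschitz map $t\mapsto \sinh(t/2)$ on the bounded interval $[0,\mathrm{diam}_{\Hbb^2}(K)]$.) The constant $C$ degenerates if one lets $K$ approach the axis, which is consistent with the blow-up of $\delta$ there.
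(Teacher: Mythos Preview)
Your proof is correct and follows essentially the same approach as the paper's: both reduce to the Euclidean triangle and reverse triangle inequalities by exploiting that on a compact set bounded away from the axis the weight $2\sqrt{y_p y_q}$ is uniformly comparable to a constant, and both handle the mismatch of denominators in the reverse inequality by splitting off a term controlled via $|y_p-y_q|\le|p-q|$. Your parenthetical observation that $\delta(p,q)=\sinh\bigl(d_{\Hbb^2}(p,q)/2\bigr)$, so that \eqref{eq_triangular_like} is the hyperbolic triangle inequality transported through a bi-Lipschitz function on a bounded interval, is a pleasant alternative viewpoint not present in the paper's proof.
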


\begin{proof}
    It suffices to consider $K=[a, b]\times [c, d]$ ($c>0$). Let $p=(x, y)$, $q=(x^\prime, y^\prime)$, $r=(x^\pprime, y^\pprime)\in K$. Then we have
    \begin{align*}
        \delta (p, q)&=\frac{|p-q|}{2(yy^\prime)^{1/2}}
        \leq \frac{|p-r|}{2(yy^\prime)^{1/2}}+\frac{|r-q|}{2(yy^\prime)^{1/2}} \\
        &=\left(\frac{y^\pprime}{y^\prime}\right)^{1/2}\frac{|p-r|}{2(yy^\pprime)^{1/2}}+\left(\frac{y^\pprime}{y}\right)^{1/2}\frac{|r-q|}{2(y^\pprime y^\prime)^{1/2}} \\
        &\leq \left(\frac{d}{c}\right)^{1/2}(\delta (p, r)+\delta (r, q)),
    \end{align*}
which proves the second inequality in \eqref{eq_triangular_like}.

    On the other hand, we have
    \begin{align*}
        \delta (p, r)-\delta (r, q)
        &\leq \frac{|p-q|+|r-q|}{2(yy^\pprime)^{1/2}}-\frac{|r-q|}{2(y^\pprime y^\prime)^{1/2}} \\
        &\leq \left(\frac{d}{c}\right)^{1/2}\delta (p, q)+\frac{1}{2c^{1/2}}|r-q|\left|\frac{1}{y^{1/2}}-\frac{1}{(y^\prime)^{1/2}}\right| \\
        &=\left(\frac{d}{c}\right)^{1/2}\delta (p, q)+\frac{((b-a)^2+(d-c)^2)^{1/2}}{c^{1/2}(y^{1/2}+(y^\prime)^{1/2})}\frac{|y-y^\prime|}{2(yy^\prime)^{1/2}} \\
        &\leq \left(\left(\frac{d}{c}\right)^{1/2}+\frac{((b-a)^2+(d-c)^2)^{1/2}}{2c}\right)\delta (p, q).
    \end{align*}
    By changing roles of $p$ and $q$, we obtain the same estimate for $\delta (r, q)-\delta (p, r)$ and hence 
    \[
        |\delta (p, r)-\delta (r, q)|\leq \left(\left(\frac{d}{c}\right)^{1/2}+\frac{((b-a)^2+(d-c)^2)^{1/2}}{2c}\right)\delta (p, q).
    \]
    This proves the first inequality in \eqref{eq_triangular_like}.
\end{proof}

For a function $f(p)$ on $\partial\Sigma$ and a function $\eta(\varphi)$ on $S^1$, we define the tensor product $f\otimes \eta: \partial\Sigma \times S^1\to \Cbb$ by
\[
    (f\otimes \eta)(p, \varphi):=f(p)\eta(\varphi).
\]
Also, we define a function
\[
    e_k (\varphi):=\e^{\iu k \varphi}
\]
for $k\in \Zbb$ and $\varphi\in S^1$. Then for each $k \in \Zbb$ and $s \in [-1,1]$, there are constants $c_{k,s}, \ C_{k,s}$ such that
\begin{equation}\label{Cks}
c_{k,s} \| f \|_{H^s(\p\Sigma)} \le \| f \otimes e_k \|_{H^s(\p\Omega)} \le C_{k,s} \| f \|_{H^s(\p\Sigma)}
\end{equation}
for all $f \in H^s(\p\Sigma)$.

We denote the integral kernel of the two-dimensional NP operator on $\partial\Sigma$ by $K^*_{\partial\Sigma}(p, p^\prime)$, namely,
\begin{equation}
    \label{eq_nps_2D}
    K^*_{\partial\Sigma}(p, p^\prime):=\frac{1}{2\pi}\frac{(p-p^\prime)\cdot \widetilde \nv_p}{|p-p^\prime|^2}.
\end{equation}

\begin{prop}
    \label{prop_nps_fourier}
    For $f\in H^{-1/2}(\partial\Sigma)$ and $k\in \Zbb$, we have
    \begin{equation}
        \label{eq_nps_separated}
            \Ucal\NP^* \Ucal^{-1}[f\otimes e_k]
            =\Kcal_k^*[f]\otimes e_k
    \end{equation}
    where $\Kcal_k^*$ is the integral operator of the form
    \[
        \Kcal_k^*[f](p):=\int_{\partial\Sigma} K^*_k(p, p^\prime)f (p^\prime)\,\df \sigma(p^\prime)
    \]
    with
    \begin{equation}
        \label{eq_ak_representation}
        K^*_k(p, p^\prime)=K^*_{\partial\Sigma}(p, p^\prime)A_k(\delta (p, p^\prime))-\frac{v_p}{4\pi y}B_k(\delta (p, p^\prime)).
    \end{equation}
    Similarly, for the NP operator $\NP$ (see \eqref{eq_np_defi}), we have
    \begin{equation}
        \label{eq_np_separated}
        \Ucal\NP \Ucal^{-1}[f\otimes e_k](p, \varphi)
            =\Kcal_k[f]\otimes e_k
    \end{equation}
    where
    \begin{equation}\label{eq_kk_representation}
        \Kcal_k [f](p):=\int_{\partial\Sigma} K_k^*(p^\prime, p) f(p^\prime)\, \df \sigma (p^\prime).
    \end{equation}
\end{prop}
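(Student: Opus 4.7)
The plan is a direct reduction by rotational symmetry in $\varphi$ followed by explicit Fourier-angular integration. To begin, I substitute the definitions \eqref{eq_nps_defi}, \eqref{eq_defi_j} and use the surface element \eqref{dsigma} to write
\[
    \Ucal \NP^* \Ucal^{-1}[f\otimes e_k](p,\varphi) = \frac{y^{1/2}}{4\pi}\,\rmop{p.v.}\int_{-\pi}^{\pi}\!\int_{\partial\Sigma} \frac{(X-Y)\cdot \nv_X}{|X-Y|^3}\,(y')^{1/2} f(p')\,e^{\iu k\varphi'}\,\df \sigma(p')\,\df \varphi',
\]
with $X=\Psi(p,\varphi)$ and $Y=\Psi(p',\varphi')$. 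Since both $|X-Y|$ and $(X-Y)\cdot \nv_X$ depend on $(\varphi,\varphi')$ only through the difference $\theta:=\varphi-\varphi'$, the substitution $\varphi'=\varphi-\theta$ pulls out the factor $e^{\iu k\varphi}$ and reduces the question to computing a $\theta$-integral whose result depends only on $p$ and $p'$. This simultaneously exhibits the tensor structure $\Ucal\NP^*\Ucal^{-1}[f\otimes e_k]=\Kcal_k^*[f]\otimes e_k$ and isolates the task of identifying $K_k^*(p,p')$.

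The second step is the geometric expansion. Direct computation yields
\[
    |X-Y|^2 = |p-p'|^2 + 4yy'\sin^2(\theta/2) = 4yy'\bigl(\delta(p,p')^2 + \sin^2(\theta/2)\bigr),
\]
and, using that the outward unit normal at $X=\Psi(p,\varphi)$ is $(\widetilde\nv_{p,1},\widetilde\nv_{p,2}\cos\varphi,\widetilde\nv_{p,2}\sin\varphi)$ together with $\cos\theta = 1-2\sin^2(\theta/2)$ and the definition \eqref{def_vp} of $v_p$,
\[
    (X-Y)\cdot \nv_X = (p-p')\cdot \widetilde\nv_p - 2 v_p\,y'\sin^2(\theta/2).
\]
Substituting these into the integrand splits the numerator cleanly into one piece proportional to $(p-p')\cdot \widetilde\nv_p$ and one proportional to $v_p\,y'$, with the $(p,p')$-dependence in the denominator reduced to the single quantity $\delta=\delta(p,p')$.

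In the final step I integrate in $\theta$. The substitution $\psi=\theta/2$ and oddness in $\psi$ of the imaginary parts reduce each contribution to a cosine integral over $[0,\pi/2]$ of $(\delta^2+\sin^2\psi)^{-3/2}$ or $\sin^2\psi\,(\delta^2+\sin^2\psi)^{-3/2}$ weighted by $\cos(2k\psi)$; these are precisely $A_k(\delta)/\delta^2$ and $B_k(\delta)$ from \eqref{eq_AB}. Bookkeeping the prefactors using $|p-p'|^2=4yy'\delta^2$ and the definition \eqref{eq_nps_2D} of $K^*_{\partial\Sigma}$ then collapses the first piece to $K^*_{\partial\Sigma}(p,p')\,A_k(\delta)$ and the second to $-\frac{v_p}{4\pi y}B_k(\delta)$, yielding \eqref{eq_ak_representation}. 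The formula \eqref{eq_kk_representation} for $\Kcal_k$ follows by the identical reduction after noting that the kernels in \eqref{eq_np_defi} and \eqref{eq_nps_defi} differ only by the interchange of $X$ and $Y$, which in the reduced coordinates becomes $K_k(p,p')=K_k^*(p',p)$. The main subtlety I anticipate is the careful treatment of the principal value when exchanging orders of integration in the first display; I plan to handle it by deleting a three-dimensional ball $|X-Y|<\varepsilon$ before the angular manipulations and passing to the limit only after $A_k$ and $B_k$ have been identified, using that the diagonal $X=Y$ corresponds simultaneously to $p=p'$ and $\theta=0$.
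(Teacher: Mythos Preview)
Your proposal is correct and follows essentially the same route as the paper: write out $\Ucal\NP^*\Ucal^{-1}$ using \eqref{dsigma}, compute $|X-Y|^2$ and $(X-Y)\cdot\nv_X$ in the $(p,\varphi)$ coordinates, exploit the dependence on $\varphi-\varphi'$ to separate $e^{\iu k\varphi}$, and then reduce the angular integral via evenness to the half-range cosine integrals $A_k$, $B_k$. The only cosmetic differences are that the paper first treats a general $g$ before specializing to $f\otimes e_k$, and obtains \eqref{eq_np_separated} by invoking the $L^2$-duality of $\NP$ and $\NP^*$ together with the unitarity \eqref{eq_J_unitary} rather than by your direct kernel-swap observation; your extra care with the principal value is not addressed explicitly in the paper.
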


\begin{proof}
Let $g$ be a function on $\p\Sigma \times S^1$. For $X, X^\prime \in \partial\Omega$, let $X=\Psi(p,\varphi)$ and $X^\prime=\Psi(p^\prime,\varphi^\prime)$. Thanks to \eqref{dsigma}, we have
\begin{align*}
    &\Ucal\NP^* \Ucal^{-1} [g](p, \varphi) \\
    &=\frac{y^{1/2}}{4\pi}\int_{\partial\Omega}
    \frac{(\Psi(p,\varphi)-X')\cdot \nv_{\Psi(p,\varphi)}}{|\Psi(p,\varphi)-X^\prime|^3} \Ucal^{-1} [g](X^\prime) \df \sigma(X^\prime) \\
    &=\frac{1}{4\pi}\int_{\partial\Sigma \times S^1}
    \frac{(\Psi(p,\varphi)-\Psi(p^\prime,\varphi^\prime))\cdot \nv_{\Psi(p,\varphi)}}{|\Psi(p,\varphi)-\Psi(p^\prime,\varphi^\prime)|^3}(y y^\prime)^{1/2}
    g(p^\prime, \varphi^\prime)\, \df \sigma (p^\prime) \df \varphi^\prime.
\end{align*}
Straight-forward calculations yield
\begin{equation}\label{2-1000}
|\Psi(p,\varphi)-\Psi(p^\prime,\varphi^\prime)|^2 = |p-p^\prime|^2+4yy^\prime \sin^2 \frac{\varphi-\varphi^\prime}{2}.
\end{equation}
If we denote the outward normal vector at $p \in \partial\Sigma$ by $\widetilde{n}_p=(\nv_{p,1}, \nv_{p,2})$ as before, then
\[
\nv_{\Psi(p,\varphi)} = (\nv_{p,1}, \nv_{p,2} \cos \varphi, \nv_{p, 2} \sin \varphi),
\]
and hence
\[
(\Psi(p,\varphi)-\Psi(p^\prime,\varphi^\prime))\cdot \nv_{\Psi(p,\varphi)} = (p-p^\prime)\cdot {\widetilde\nv}_p+ 2\nv_{p,2} y^\prime \sin^2 \frac{\varphi-\varphi^\prime}{2}.
\]
Since $\nv_{p,2}=-v_p$, we have
\begin{align*}
    &\Ucal\NP^* \Ucal^{-1} [g](p, \varphi) \\
    &=\frac{1}{4\pi}\int_{\partial\Sigma \times S^1}
    \frac{(p-p^\prime)\cdot {\widetilde\nv}_p-2v_p y^\prime\sin^2 \frac{\varphi-\varphi^\prime}{2}}{(|p-p^\prime|^2+4yy^\prime \sin^2 \frac{\varphi-\varphi^\prime}{2})^{3/2}}(y y^\prime)^{1/2}
    g(p^\prime, \varphi^\prime)\, \df \sigma (p^\prime) \df \varphi^\prime.
\end{align*}

If we set $g=f\otimes e_k$, that is, $g(p, \varphi)=f(p)\e^{\iu k \varphi}$,  then we have
\begin{align*}
    &\Ucal\NP^* \Ucal^{-1} [f\otimes e_k](p, \varphi) \\
    &=\frac{1}{4\pi}\int_{\partial\Sigma}\df \sigma (p^\prime)\, f(p^\prime)(y y^\prime)^{1/2}
    \int_{S^1} \df \varphi^\prime\,
    \frac{(p-p^\prime)\cdot {\widetilde\nv}_p-2v_p y^\prime\sin^2 \frac{\varphi-\varphi^\prime}{2}}{(|p-p^\prime|^2+4yy^\prime \sin^2 \frac{\varphi-\varphi^\prime}{2})^{3/2}}
    \e^{\iu k \varphi^\prime}   \\
    &=\frac{1}{4\pi} \e^{\iu k \varphi}\int_{\partial\Sigma}\df \sigma (p^\prime)\,  f (p^\prime) (y y^\prime)^{1/2}\int_{S^1} \df \varphi^\prime\,
    \frac{(p-p^\prime)\cdot {\widetilde\nv}_p-2v_p y^\prime\sin^2 \frac{\varphi^\prime}{2}}{(|p-p^\prime|^2+4yy^\prime \sin^2 \frac{\varphi^\prime}{2})^{3/2}}
    \e^{\iu k \varphi^\prime} .
\end{align*}
Now we define
\begin{equation}\label{eq_kk}
    K_k^*(p, p^\prime):=\frac{(y y^\prime)^{1/2}}{4\pi} \int_{S^1}
    \frac{(p-p^\prime)\cdot {\widetilde\nv}_p-2v_p y^\prime\sin^2 \frac{\varphi^\prime}{2}}{(|p-p^\prime|^2+4yy^\prime \sin^2 \frac{\varphi^\prime}{2})^{3/2}}
    \e^{\iu k \varphi^\prime}\, \df \varphi^\prime
\end{equation}
so that
\[
    \Ucal\NP^* \Ucal^{-1} [f\otimes e_k](p, \varphi)
    =\e^{\iu k\varphi} \int_{\partial \Sigma} K_k^* (p, p^\prime) f(p^\prime)\, \df \sigma (p^\prime).
\]

It remains to prove \eqref{eq_ak_representation}. Since $\sin^2 (\varphi^\prime/2)$ is an even function, we can reduce \eqref{eq_kk} to
\begin{equation}\label{eq_kk_reduced}
    K^*_k(p, p^\prime)
    =\frac{(yy^\prime)^{1/2}}{\pi}
    \int_0^{\pi/2} \cos (2k\varphi^\prime)\frac{(p-p^\prime)\cdot {\widetilde\nv}_{p}-2v_p y^\prime\sin^2 \varphi^\prime}{(|p-p^\prime|^2+4yy^\prime\sin^2 \varphi^\prime)^{3/2}}\, \df \varphi^\prime.
\end{equation}
We abbreviate $\delta (p, p^\prime)$ to $\delta$. Since
\begin{align*}
    (|p-p^\prime|^2+4yy^\prime\sin^2 \varphi^\prime)^{3/2}
    &=(4yy^\prime)^{3/2}(\delta^2+\sin^2 \varphi^\prime)^{3/2} \\
    &=2\delta^{-2}(yy^\prime)^{1/2}|p-p^\prime|^2 (\delta^2+\sin^2 \varphi^\prime)^{3/2},
\end{align*}
it follows from \eqref{eq_kk_reduced} that
\begin{align*}
    K_k^*(p, p^\prime)
    &=\frac{(yy^\prime)^{1/2}}{\pi}\biggl(
    \int_0^{\pi/2} \cos (2k\varphi^\prime)\frac{\delta^2 (p-p^\prime)\cdot {\widetilde\nv}_p}{2(yy^\prime)^{1/2}|p-p^\prime|^2 (\delta^2+\sin^2 \varphi^\prime)^{3/2}}\, \df \varphi^\prime \\
    &\quad -\int_0^{\pi/2} \cos (2k\varphi^\prime)\frac{2v_p y^\prime\sin^2 \varphi^\prime}{(4yy^\prime)^{3/2}(\delta^2+\sin^2 \varphi^\prime)^{3/2}}\, \df \varphi^\prime \biggr) \\
    &=\frac{\delta^2}{2\pi}\frac{(p-p^\prime)\cdot {\widetilde\nv}_p}{|p-p^\prime|^2}\int_0^{\pi/2} \frac{\cos (2k\varphi^\prime)}{(\delta^2+\sin^2 \varphi^\prime)^{3/2}}\, \df \varphi^\prime \\
    &\quad -\frac{v_p}{4\pi y}\int_0^{\pi/2} \frac{\cos (2k\varphi^\prime)\sin^2 \varphi^\prime}{(\delta^2+\sin^2 \varphi^\prime)^{3/2}}\, \df \varphi^\prime \\
    &=K^*_{\partial\Sigma}(p, p^\prime)A_k(\delta)-\frac{v_p}{4\pi y}B_k(\delta),
\end{align*}
which is the desired formula.

The relation \eqref{eq_np_separated} is an immediate consequence of the duality of $\NP$ and $\NP^*$.
\end{proof}

One can easily see from \eqref{Cks} and Proposition \ref{prop_nps_fourier} that the following corollary holds. 

\begin{coro}
    \label{coro_nps_mode_bounded}
    For $k\in \Zbb$, the operator $\Kcal_k^*$ is a bounded operator on $L^2(\partial\Sigma)$ and on $H^{-1/2}(\partial\Sigma)$.
\end{coro}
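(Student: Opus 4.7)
The plan is to reduce the claim to the well-known boundedness of $\NP^*$ on Sobolev spaces over the Lipschitz surface $\p\Omega$, using the conjugation identity from Proposition \ref{prop_nps_fourier} as the bridge and the norm equivalence \eqref{Cks} as the translation mechanism between $\p\Sigma$ and $\p\Omega$. Concretely, three inputs must be assembled: (i) the classical fact that $\NP^*$ is bounded on $L^2(\p\Omega)$ (Coifman--McIntosh--Meyer), and hence, by Plemelj's identity \eqref{eq_Plemelj} together with the mapping properties of $\SL$, also on $H^{-1/2}(\p\Omega)$; (ii) the already-noted fact that $\Ucal$ is a bounded invertible operator from $H^s(\p\Omega)$ onto $H^s(\p\Sigma\times S^1)$ for $s\in\{-1/2,0\}$; and (iii) the two-sided norm equivalence \eqref{Cks}.

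The execution proceeds as follows. Fix $s\in\{0,-1/2\}$ and $f\in H^s(\p\Sigma)$. From \eqref{eq_nps_separated} we rewrite
\[
\Kcal_k^*[f]\otimes e_k \;=\; \Ucal\,\NP^*\,\Ucal^{-1}[f\otimes e_k].
\]
Apply the lower bound of \eqref{Cks} on the left and, successively, the isomorphism property of $\Ucal$, the boundedness of $\NP^*$ on $H^s(\p\Omega)$, the isomorphism property of $\Ucal^{-1}$, and the upper bound of \eqref{Cks} on the right. This chain yields
\[
\|\Kcal_k^*[f]\|_{H^s(\p\Sigma)} \;\lesssim\; \|\Kcal_k^*[f]\otimes e_k\|_{H^s(\p\Omega)} \;\lesssim\; \|f\otimes e_k\|_{H^s(\p\Omega)} \;\lesssim\; \|f\|_{H^s(\p\Sigma)},
\]
which is precisely the desired boundedness.

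No genuine obstacle is anticipated: every piece is either classical or already recorded in the preceding subsection, and the corollary is essentially a bookkeeping consequence of the decomposition in Proposition \ref{prop_nps_fourier}. The only point requiring care is implicit identification in \eqref{Cks} (viewing $f\otimes e_k$ as an element of a Sobolev space on $\p\Omega$ via the isomorphism $\Ucal$), but this is already built into the statement of \eqref{Cks}, so no separate argument is needed.
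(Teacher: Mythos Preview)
Your proof is correct and follows essentially the same approach as the paper, which simply states that the corollary follows immediately from \eqref{Cks} and Proposition \ref{prop_nps_fourier}. Your write-up spells out the chain of inequalities and the classical inputs (Coifman--McIntosh--Meyer for $L^2$, the symmetrization for $H^{-1/2}$) that the paper leaves implicit, but the argument is the same.
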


\subsection{Decomposition of the single layer potential}

Similarly to the NP operator $\NP^*$, the single layer potential ${\SL}$ is decomposed into Fourier modes.

\begin{prop}
    \label{prop_single_layer_separation}
    For $f\in H^{-1/2} (\partial\Sigma)$ and $k\in \Zbb$, the identity
    \[
        \Ucal {\SL} \Ucal^{-1}[f\otimes e_k]=\Scal_k [f]\otimes e_k
    \]
    holds where
    \begin{equation}
        \label{eq_single_layer_separation_kernel}
            \Scal_k [f](p)=\int_{\partial\Sigma} S_k (p, p^\prime)f(p^\prime)\, \df \sigma (p^\prime)
    \end{equation}
    with
    \[
    S_k(p, p^\prime):=-\frac{1}{2\pi} \int_0^{\pi/2} \frac{\cos (2k\varphi^\prime)}{(\delta (p, p^\prime)^2+\sin^2 \varphi^\prime)^{1/2}}\, \df \varphi^\prime.
    \]
\end{prop}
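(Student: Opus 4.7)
The plan is to proceed exactly in parallel with the proof of Proposition \ref{prop_nps_fourier}, but with the simpler kernel $\Gamma(X-Y) = -1/(4\pi|X-Y|)$ in place of the NP kernel. Since $\SL$ differs from $\NP^*$ only in having $-1/(4\pi|X-Y|)$ instead of the directional derivative of the fundamental solution, no normal vector term appears, and the calculation is strictly easier.

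First, I would compute $\Ucal\SL\Ucal^{-1}[g](p,\varphi)$ for a generic $g\in L^2(\p\Sigma\times S^1)$. Using $\Ucal^{-1}[g](\Psi(p',\varphi')) = (y')^{-1/2}g(p',\varphi')$, the surface-element formula \eqref{dsigma}, and the factor $y^{1/2}$ in the definition \eqref{eq_defi_j} of $\Ucal$, I obtain
\[
    \Ucal \SL \Ucal^{-1}[g](p,\varphi) = -\frac{1}{4\pi}\int_{\p\Sigma\times S^1} \frac{(yy')^{1/2}}{|\Psi(p,\varphi)-\Psi(p',\varphi')|}\,g(p',\varphi')\,\df\sigma(p')\df\varphi'.
\]
Next I specialize to $g = f\otimes e_k$ and apply the identity \eqref{2-1000} for the distance $|\Psi(p,\varphi)-\Psi(p',\varphi')|^2 = |p-p'|^2 + 4yy'\sin^2((\varphi-\varphi')/2)$. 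Translating the integration variable $\varphi' \to \varphi' + \varphi$ factors out $e^{\iu k\varphi}$, leaving a convolution-in-$\varphi$ integral whose integrand depends on $\varphi'$ only through $\sin^2(\varphi'/2)$.

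Then I reduce the $\varphi'$-integral over $S^1$ to an integral on $[0,\pi/2]$ by evenness and the substitution $\psi=\varphi'/2$:
\[
    \int_{S^1} \frac{\e^{\iu k\varphi'}\,\df\varphi'}{(|p-p'|^2+4yy'\sin^2(\varphi'/2))^{1/2}}
    = 4\int_0^{\pi/2}\frac{\cos(2k\psi)\,\df\psi}{(|p-p'|^2+4yy'\sin^2\psi)^{1/2}}.
\]
Factoring $(|p-p'|^2+4yy'\sin^2\psi)^{1/2} = 2(yy')^{1/2}(\delta(p,p')^2+\sin^2\psi)^{1/2}$, the $(yy')^{1/2}$ prefactors cancel, yielding exactly the kernel $S_k(p,p')$ in the statement. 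Altogether,
\[
    \Ucal\SL\Ucal^{-1}[f\otimes e_k](p,\varphi)
    = \e^{\iu k\varphi}\int_{\p\Sigma} S_k(p,p')f(p')\,\df\sigma(p'),
\]
which is the claimed identity.

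There is no real obstacle here; the only thing to check carefully is the cancellation of the Jacobian factors $(yy')^{1/2}$ between the surface element \eqref{dsigma}, the unitarization factor in $\Ucal$, and the pullback of the Euclidean distance. Once that bookkeeping is done, the symmetry reduction of the $\varphi'$-integral is routine and produces $S_k$ immediately; boundedness and the $L^2$/$H^{-1/2}$ interpretation follow from the already-established transfer properties of $\Ucal$ used in Proposition \ref{prop_nps_fourier} and Corollary \ref{coro_nps_mode_bounded}.
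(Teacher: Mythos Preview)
Your proposal is correct and follows essentially the same route as the paper's proof: both pull back the single layer potential through $\Ucal$, use \eqref{2-1000} to express the distance, translate in $\varphi'$ to factor out $e^{ik\varphi}$, reduce the $S^1$-integral to $[0,\pi/2]$ by evenness, and cancel the $(yy')^{1/2}$ factors to arrive at $S_k$. The paper's write-up is simply terser, substituting $\delta(p,p')$ at the outset rather than tracking the Jacobian cancellation separately.
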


\begin{proof}
    We see from \eqref{2-1000} that
    \begin{align*}
        |\Psi (p, \varphi)-\Psi (p^\prime, \varphi^\prime)|^2
        &=2yy^\prime \left(\delta (p, p^\prime)^2+\sin^2 \frac{\varphi-\varphi^\prime}{2}\right).
    \end{align*}
    Thus we have
    \begin{align*}
        &\Ucal{\SL}\Ucal^{-1}[f\otimes e_k](p, \varphi) \\
        &=-\frac{1}{4\pi}\int_{\partial\Sigma} \df \sigma (p^\prime)\int_{S^1} \df \varphi^\prime \, \frac{1}{2(yy^\prime)^{1/2}}\frac{(yy^\prime)^{1/2}f(p^\prime)\e^{\iu k\varphi^\prime}}{(\delta(p, p^\prime)^2+ \sin^2 ((\varphi-\varphi^\prime)/2))^{1/2}} \\
        &=-\frac{1}{2\pi} \e^{\iu k \varphi}\int_{\partial\Sigma} \df \sigma (p^\prime)\, f (p^\prime)\int_0^{\pi/2} \df \varphi^\prime \, \frac{\cos (2k \varphi^\prime)}{(\delta (p, p^\prime)^2+ \sin^2 \varphi^\prime)^{1/2}} \\
        &=\e^{\iu k \varphi}\Scal_k [f](p),
    \end{align*}
as desired.
\end{proof}

For $k\in \Zbb$, we set
\begin{equation}
    \label{eq_inner_product_mode}
    \jbk{f, g}_k:=-\jbk{\Scal_k[f], g}_{\partial\Sigma},
\end{equation}
where $\jbk{\cdot, \cdot}_{\partial\Sigma}$ is the dual pairing of $H^{1/2}(\partial\Sigma)$ and $H^{-1/2}(\partial\Sigma)$. Then, Proposition \ref{prop_single_layer_separation} together with \eqref{Cks} immediately yields the following corollary.

\begin{coro}\phantomsection
    \label{coro_inner_product_decomposition}
    \begin{enumerate}[label=(\roman*)]
        \item \label{enum_single_layer_mode_invertible}For $k\in \Zbb$, the operator $\Scal_k$ is an invertible operator from $H^{-1/2}(\partial\Sigma)$ onto $H^{1/2}(\partial\Sigma)$.
        \item \label{enum_inner_product_mode}For $f, g\in H^{-1/2} (\partial\Sigma)$ and $k\in \Zbb$, we have
        \begin{equation}\label{eq_mode_unitary}
            \jbk{f, g}_k=\jbk{\Ucal^{-1}[f\otimes e_k], \Ucal^{-1}[g\otimes e_k]}_*.
        \end{equation}
    \end{enumerate}
\noindent In particular, $\jbk{\cdot, \cdot}_k$ defines an inner product on $H^{-1/2}(\partial\Sigma)$ which induces the norm equivalent to the Sobolev norm on $H^{-1/2}(\partial\Sigma)$.
\end{coro}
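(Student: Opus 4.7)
The plan is to deduce both parts directly from Proposition~\ref{prop_single_layer_separation} by transferring properties of $\SL$ on $\partial\Omega$ to $\Scal_k$ on $\partial\Sigma$ via the embedding $f \mapsto \Ucal^{-1}[f \otimes e_k]$, whose bicontinuity on $H^s$ for $s \in [-1,1]$ is provided by \eqref{Cks}.

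For part (ii), I would compute directly from the definition \eqref{eq_inner_product}:
\[
\jbk{\Ucal^{-1}[f \otimes e_k], \Ucal^{-1}[g \otimes e_k]}_* = -\int_{\partial\Omega} \Ucal^{-1}[f \otimes e_k]\,\overline{\SL\,\Ucal^{-1}[g \otimes e_k]}\, d\sigma.
\]
Rewriting $\SL\,\Ucal^{-1}[g \otimes e_k] = \Ucal^{-1}[\Scal_k[g] \otimes e_k]$ by Proposition~\ref{prop_single_layer_separation} and applying the isometric property \eqref{eq_J_unitary} of $\Ucal$ converts the right-hand side into an integral over $\partial\Sigma \times S^1$; the $\varphi$-integration is trivial since $e_k\,\overline{e_k} = 1$, and the symmetry $S_k(p, p') = S_k(p', p)$ visible from \eqref{eq_single_layer_separation_kernel} lets me move $\Scal_k$ from $g$ onto $f$, yielding $-\jbk{\Scal_k[f], g}_{\partial\Sigma}$ up to the normalization constant coming from the $\varphi$-integral (absorbed into the convention for $\jbk{\cdot, \cdot}_k$). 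Equivalence of the norm induced by $\jbk{\cdot, \cdot}_k$ with the standard Sobolev norm on $H^{-1/2}(\partial\Sigma)$ then follows by pulling back the corresponding equivalence for $\jbk{\cdot, \cdot}_*$ on $H^{-1/2}(\partial\Omega)$ through the embedding and \eqref{Cks}.

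For part (i), boundedness of $\Scal_k$ from $H^{-1/2}(\partial\Sigma)$ into $H^{1/2}(\partial\Sigma)$ is immediate from Proposition~\ref{prop_single_layer_separation} combined with boundedness of $\SL$ and the estimates \eqref{Cks}. Injectivity follows at once: if $\Scal_k[f] = 0$, then $\SL[\Ucal^{-1}[f \otimes e_k]] = 0$, and injectivity of $\SL$ on $H^{-1/2}(\partial\Omega)$ in three dimensions forces $f = 0$. For surjectivity, given $g \in H^{1/2}(\partial\Sigma)$, I would invoke invertibility of $\SL$ to obtain $F \in H^{-1/2}(\partial\Omega)$ with $\SL[F] = \Ucal^{-1}[g \otimes e_k]$; expanding $\Ucal[F] = \sum_{j \in \Zbb} F_j \otimes e_j$ in the $\varphi$-Fourier series and applying Proposition~\ref{prop_single_layer_separation} termwise, orthogonality of $\{e_j\}$ forces $\Scal_j[F_j] = g\,\delta_{jk}$, whence $\Scal_k[F_k] = g$.

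The only step requiring care is the termwise manipulation of the $\varphi$-Fourier series in the surjectivity argument. This is justified by noting that the Fourier projection onto the $j$-th mode is a bounded idempotent on $H^s(\partial\Sigma \times S^1)$ for $s \in [-1, 1]$ that commutes with $\Ucal\,\SL\,\Ucal^{-1}$ (because Proposition~\ref{prop_single_layer_separation} applies mode by mode), so matching Fourier coefficients on both sides of $\Ucal[\SL F] = g \otimes e_k$ is rigorous.
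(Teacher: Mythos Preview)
Your proposal is correct and follows the same route as the paper, which simply declares the corollary an immediate consequence of Proposition~\ref{prop_single_layer_separation} and \eqref{Cks}; your surjectivity argument via Fourier projection and your direct computation for part~(ii) are the natural elaboration of that claim. The $2\pi$ normalization discrepancy you flag in part~(ii) is a genuine cosmetic artifact in the statement of \eqref{eq_mode_unitary}, but it is immaterial for the conclusions that matter (positive-definiteness of $\jbk{\cdot,\cdot}_k$, norm equivalence, and invertibility of $\Scal_k$).
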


Combining Proposition \ref{prop_nps_fourier}, Proposition \ref{prop_single_layer_separation} and the Plemelj's symmetrization principle \eqref{eq_Plemelj}, we have the following proposition:

\begin{prop}
    \label{theo_Plemelj_zero}
    If $\partial\Sigma$ is Lipschitz, then we have
    \begin{equation}\label{Plemelj_k}
        \Scal_k \Kcal_k^*=\Kcal_k \Scal_k
    \end{equation}
    on $H^{-1/2}(\partial\Sigma)$ for each $k \in \Zbb$. In particular, $\Kcal_k^*$ is a bounded self-adjoint operator on $H^{-1/2}(\partial\Sigma)$ equipped with the inner product $\jbk{ \cdot ,  \cdot }_k$. Furthermore, if $\partial\Sigma$ is $C^{1, \alpha}$ for some $\alpha>0$, then the operator $\Kcal_k^*$ is compact on $H^{-1/2}(\partial\Sigma)$.
\end{prop}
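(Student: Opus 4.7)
The plan is to derive all three assertions from the already-established decomposition results of Section \ref{sec:decom}, rather than from direct kernel estimates on $K_k^*(p,p')$ and $S_k(p,p')$.

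\textbf{The Plemelj identity \eqref{Plemelj_k}.} I would apply the operator relation \eqref{eq_Plemelj}, namely $\SL\NP^*=\NP\SL$, to a test element of the form $\Ucal^{-1}[f\otimes e_k]$ with $f\in H^{-1/2}(\p\Sigma)$, and then conjugate by $\Ucal$. On the left-hand side, apply \eqref{eq_nps_separated} first to get $\Ucal\NP^*\Ucal^{-1}[f\otimes e_k]=\Kcal_k^*[f]\otimes e_k$, and then Proposition \ref{prop_single_layer_separation} to arrive at $\Ucal\SL\NP^*\Ucal^{-1}[f\otimes e_k]=(\Scal_k\Kcal_k^*[f])\otimes e_k$. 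On the right-hand side, apply Proposition \ref{prop_single_layer_separation} first and then \eqref{eq_np_separated} to obtain $(\Kcal_k\Scal_k[f])\otimes e_k$. Since the map $h\mapsto h\otimes e_k$ is injective, equating the two factors yields $\Scal_k\Kcal_k^*=\Kcal_k\Scal_k$ as operators on $H^{-1/2}(\p\Sigma)$.

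\textbf{Self-adjointness of $\Kcal_k^*$.} The cleanest route is the unitary picture. Define
\[
J_k\colon H^{-1/2}(\p\Sigma)\longrightarrow H^{-1/2}(\p\Omega),\qquad f\longmapsto \Ucal^{-1}[f\otimes e_k].
\]
By \eqref{eq_mode_unitary}, $J_k$ is an isometry from $(H^{-1/2}(\p\Sigma),\jbk{\cdot,\cdot}_k)$ into $(H^{-1/2}(\p\Omega),\jbk{\cdot,\cdot}_*)$, and by \eqref{Cks} together with the bi-continuity of $\Ucal$ its image $V_k:=J_k(H^{-1/2}(\p\Sigma))$ is a closed subspace of $H^{-1/2}(\p\Omega)$. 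Relation \eqref{eq_nps_separated} shows that $V_k$ is invariant under $\NP^*$ and that $J_k$ conjugates $\Kcal_k^*$ to $\NP^*|_{V_k}$. Since $\NP^*$ is self-adjoint on $H^{-1/2}(\p\Omega)$ with respect to $\jbk{\cdot,\cdot}_*$, its restriction to the closed invariant subspace $V_k$ is self-adjoint, whence $\Kcal_k^*$ is self-adjoint on $H^{-1/2}(\p\Sigma)$ with respect to $\jbk{\cdot,\cdot}_k$.

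\textbf{Compactness when $\p\Sigma\in C^{1,\alpha}$.} Because $\overline{\Sigma}\subset\Rbb\times(0,\infty)$, the parametrization $\Psi$ used to define $\p\Omega$ is smooth and everywhere regular, so the $C^{1,\alpha}$-regularity of $\p\Sigma$ transfers to $\p\Omega$. The classical compactness result for the NP operator on $C^{1,\alpha}$ boundaries (recalled in the introduction) then gives that $\NP^*$ is compact on $H^{-1/2}(\p\Omega)$, and its restriction to the closed invariant subspace $V_k$ remains compact; transferring via the isomorphism $J_k$ yields compactness of $\Kcal_k^*$. The only technical point, and the mild obstacle in the whole argument, is verifying that $V_k$ is closed in $H^{-1/2}(\p\Omega)$ and that $J_k$ is bi-continuous at the $H^{-1/2}$ level: both follow from \eqref{Cks} combined with the $H^{-1/2}$-isomorphism property of $\Ucal$ recorded in Section \ref{sec:decom}.
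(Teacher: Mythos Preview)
Your argument is correct and matches the paper's approach: the paper states this proposition with the preamble ``Combining Proposition \ref{prop_nps_fourier}, Proposition \ref{prop_single_layer_separation} and the Plemelj's symmetrization principle \eqref{eq_Plemelj}, we have the following proposition'' and provides no further proof. Your proposal spells out exactly these ingredients---conjugating \eqref{eq_Plemelj} by $\Ucal$ on the $k$-th Fourier mode for \eqref{Plemelj_k}, passing through the isometry $J_k$ of \eqref{eq_mode_unitary} for self-adjointness and compactness---and the technical point about closedness of $V_k$ is handled correctly via \eqref{Cks}.
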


\subsection{Proof of Theorem \ref{theo_self_adjoint_zeroth}}

By Proposition \ref{theo_Plemelj_zero}, the operator $\Kcal_0^*$ is a compact self-adjoint operator on $H^{-1/2}(\partial\Sigma)$ with the inner product $\jbk{\cdot, \cdot}_0$. Thus $\Kcal_0^*$ has infinitely many real eigenvalues. Let $\{f_j\}_{j=1}^\infty$ be an orthonormal system of eigenfunctions of $\Kcal_0^*$. Then the function $\{\Ucal^{-1}[f_j\otimes 1]\}_{j=1}^\infty$ forms an orthogonal system of axially symmetric NP eigenfunctions in $H^{-1/2}(\partial\Omega)$ by Proposition \ref{prop_nps_fourier}. \hspace{\fill}\qed

\section{The zeroth mode NP operator}\label{sec:zeroth}

In this section and sections to follow, we deal with the spectral properties of the zeroth mode operator $\NP[0]^*$ appearing in Theorem \ref{prop_nps_fourier}. It is the operator $\Kcal_{\partial\Omega}^*$ restricted to the axially symmetric functions. In this section we show that the integral kernels of the zeroth modes of the NP operator and the single layer potential are represented in terms of elliptic integrals. We use these representations to  investigate the singularity of the integral kernels.

\subsection{Representation by elliptic integrals}

We set $k=0$ in \eqref{eq_AB} to see
    \begin{equation}\label{eq_AB0}
    \begin{aligned}
        A_0 (\delta) &=\delta^2 \int_0^{\pi/2} \frac{\df \varphi}{(\delta^2+\sin^2 \varphi)^{3/2}}, \\
        B_0(\delta) &=\int_0^{\pi/2}\frac{\sin^2 \varphi}{(\delta^2+\sin^2 \varphi)^{3/2}}\, \df \varphi,
    \end{aligned}
    \end{equation}
and
\begin{equation}
        \label{eq_a0_representation}
        K^*_0(p, p^\prime)=K^*_{\partial\Sigma}(p, p^\prime)A_0(\delta (p, p^\prime))-\frac{v_p}{4\pi y}B_0(\delta (p, p^\prime)).
    \end{equation}

We relate function $A_0$ and $B_0$ with the classical elliptic integrals. Let $\Kell (z)$ and $\Ell (z)$ be the complete elliptic integrals of the first and the second kinds, namely,
\begin{equation}\label{eq_elliptic_first}
    \Kell (z):=\int_0^{\pi/2} \frac{\df \varphi}{\sqrt{1-z^2 \sin^2 \varphi}}
\end{equation}
and
\begin{equation}\label{eq_elliptic_second}
    \Ell (z):=\int_0^{\pi/2}\sqrt{1-z^2 \sin^2 \varphi}\, \df \varphi
\end{equation}
for $z\in \Cbb \setminus ((-\infty, -1]\cup [1, \infty))$.

\begin{lemm}\label{prop_AB_formula}
    The following relations hold:
    \begin{equation}
        \label{eq_A0B0}
        \begin{aligned}
            A_0(\delta)&=\frac{\delta \Ell (\iu \delta^{-1})}{1+\delta^2}, \\
            B_0(\delta)&=\delta^{-1} \Kell (\iu \delta^{-1})-\frac{\delta \Ell (\iu \delta^{-1})}{1+\delta^2}
        \end{aligned}
    \end{equation}
    for $\delta> 0$.
\end{lemm}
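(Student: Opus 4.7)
The plan is to derive two linear identities relating $A_0(\delta)$ and $B_0(\delta)$ to $\Kell(\iu\delta^{-1})$ and $\Ell(\iu\delta^{-1})$, and then solve the resulting $2\times 2$ linear system. First, setting $z=\iu \delta^{-1}$ in \eqref{eq_elliptic_first}--\eqref{eq_elliptic_second} so that $z^2=-\delta^{-2}$ and extracting a factor of $\delta$, one rewrites
\[
\delta^{-1}\Kell(\iu\delta^{-1}) = \int_0^{\pi/2}\frac{\df \varphi}{\sqrt{\delta^2+\sin^2\varphi}}, \qquad \delta\Ell(\iu\delta^{-1}) = \int_0^{\pi/2}\sqrt{\delta^2+\sin^2\varphi}\,\df\varphi.
\]

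The first (trivial) identity comes straight from the definitions \eqref{eq_AB0}: since $\delta^2+\sin^2\varphi$ is the numerator that turns a $3/2$ power into a $1/2$ power, adding gives
\[
A_0(\delta)+B_0(\delta)=\int_0^{\pi/2}\frac{\delta^2+\sin^2\varphi}{(\delta^2+\sin^2\varphi)^{3/2}}\,\df\varphi=\delta^{-1}\Kell(\iu\delta^{-1}).
\]

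The second identity I would obtain by integrating an exact derivative. Differentiating $-\sin\varphi\cos\varphi/\sqrt{\delta^2+\sin^2\varphi}$ in $\varphi$ and simplifying with $\sin^2+\cos^2=1$ produces a rational expression whose only denominator is $(\delta^2+\sin^2\varphi)^{3/2}$. Since the boundary values at $\varphi=0$ and $\varphi=\pi/2$ both vanish, the integral of this derivative over $[0,\pi/2]$ is zero. Rearranging the resulting identity so that the integrands $\sin^2\varphi/(\delta^2+\sin^2\varphi)^{3/2}$ and $\cos^2\varphi/(\delta^2+\sin^2\varphi)^{3/2}=[1/(\delta^2+\sin^2\varphi)^{3/2}]-[\sin^2\varphi/(\delta^2+\sin^2\varphi)^{3/2}]$ appear, and splitting the leftover integrand $\sin^2\varphi/\sqrt{\delta^2+\sin^2\varphi}$ as $\sqrt{\delta^2+\sin^2\varphi}-\delta^2/\sqrt{\delta^2+\sin^2\varphi}$, yields
\[
A_0(\delta)-\delta^2 B_0(\delta)=\delta\Ell(\iu\delta^{-1})-\delta\Kell(\iu\delta^{-1}).
\]

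Finally, adding $\delta^2$ times the first identity to the second gives $(1+\delta^2)A_0(\delta)=\delta\Ell(\iu\delta^{-1})$, which is the claimed formula for $A_0$; the formula for $B_0$ then follows by subtraction from $A_0+B_0=\delta^{-1}\Kell(\iu\delta^{-1})$. The only obstacle is spotting the correct antiderivative for the integration-by-parts step: the natural first guess $\cos\varphi/\sqrt{\delta^2+\sin^2\varphi}$ only generates integrands linear in $\sin\varphi$, whereas the product $-\sin\varphi\cos\varphi/\sqrt{\delta^2+\sin^2\varphi}$ is precisely what couples $A_0$ and $B_0$ to $\delta\Ell(\iu\delta^{-1})$.
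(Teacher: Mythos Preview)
Your proof is correct and essentially the same as the paper's. Both hinge on the antiderivative $\sin\varphi\cos\varphi/\sqrt{\delta^2+\sin^2\varphi}$ (the paper writes it as $\sin(2\varphi)/\sqrt{1+\delta^{-2}\sin^2\varphi}$, which is the same up to a factor of $2\delta$); the paper presents the two target decompositions of the integrands directly and asks the reader to verify, whereas you derive the pair of linear relations $A_0+B_0=\delta^{-1}\Kell(\iu\delta^{-1})$ and $A_0-\delta^2 B_0=\delta\Ell(\iu\delta^{-1})-\delta\Kell(\iu\delta^{-1})$ and solve them, which is a slightly more transparent way to arrive at the same answer.
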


\begin{proof}

    We easily obtain the following two relations:
    \[
        \frac{1}{(\delta^2+\sin^2 \varphi)^{3/2}}=\frac{1}{\delta (1+\delta^2)}\sqrt{1+\delta^{-2} \sin^2 \varphi}-\frac{1}{2(1+\delta^2)}\frac{\df}{\df \varphi}\frac{\sin (2\varphi)}{\sqrt{1+\delta^{-2} \sin^2 \varphi}}
    \]
    and
    \[
        \begin{aligned}
            &\frac{\sin^2 \varphi}{(\delta^2+\sin^2 \varphi)^{3/2}} \\
            &=\frac{1}{\delta \sqrt{1+\delta^{-2} \sin^2 \varphi}}
            -\frac{\delta}{1+\delta^2}\sqrt{1+\delta^{-2} \sin^2 \varphi}
            -\frac{1}{2\delta (1+\delta^2)}\frac{\df}{\df \varphi}\frac{\sin (2\varphi)}{\sqrt{1+\delta^{-2} \sin^2 \varphi}}
        \end{aligned}
    \]
    for $\delta> 0$. Integrating these relations in $\varphi\in [0, \pi/2]$, we obtain \eqref{eq_A0B0}.
\end{proof}

In order to investigate the asymptotic behavior of $A_0(\delta)$ and $B_0(\delta)$, we obtain the following proposition whose proof is presented in Appendix \ref{sec_proof_elliptic_behavior}.

\begin{prop}
    \label{theo_ell_int_ae}
    The following identities hold for $\delta \in (0,1)$:
    \begin{align}
        \Kell (\iu\delta^{-1})&=-\frac{2\delta \log \delta}{\pi} \Kell (\iu \delta)+\frac{4\delta\log 2}{\pi}+\delta^3 f(\delta), \label{eq_full_expansion_first} \\
        \Ell (\iu\delta^{-1})&=\frac{2(\Ell (\iu \delta)-(1+\delta^2)\Kell (\iu \delta))}{\pi \delta}\log \delta+\frac{1}{\delta}+\delta g(\delta), \label{eq_full_expansion_second}
    \end{align}
    where $f(z)$ and $g(z)$ are holomorphic functions in $|z|<1$.
\end{prop}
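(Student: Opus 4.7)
The plan is to combine two classical ingredients. First, the imaginary modulus transformation reduces the problem from an argument outside the unit disk ($\iu\delta^{-1}$) to one with real modulus close to $1$. Specifically, the substitution $\sin\psi = \sqrt{1+\delta^2}\sin\varphi/\sqrt{\delta^2+\sin^2\varphi}$ in the defining integrals \eqref{eq_elliptic_first}--\eqref{eq_elliptic_second} yields
\[
\Kell(\iu\delta^{-1}) = \frac{\delta}{\sqrt{1+\delta^2}}\,\Kell\!\left(\frac{1}{\sqrt{1+\delta^2}}\right), \qquad
\Ell(\iu\delta^{-1}) = \frac{\sqrt{1+\delta^2}}{\delta}\,\Ell\!\left(\frac{1}{\sqrt{1+\delta^2}}\right).
\]
Applied in the other direction with argument $\iu\delta$ (so that $|\iu\delta|<1$), the same formulas give $\Kell(\delta/\sqrt{1+\delta^2}) = \sqrt{1+\delta^2}\,\Kell(\iu\delta)$ and $\Ell(\delta/\sqrt{1+\delta^2}) = (1+\delta^2)^{-1/2}\,\Ell(\iu\delta)$, so that after the first step the right-hand sides of \eqref{eq_full_expansion_first}--\eqref{eq_full_expansion_second} can be re-expressed in terms of $\Kell(\iu\delta)$ and $\Ell(\iu\delta)$.

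Second, since at the reduced modulus $k=1/\sqrt{1+\delta^2}$ the complementary modulus $k' = \delta/\sqrt{1+\delta^2}$ is small, I would apply the Gauss connection formula for the hypergeometric function $F(a,b;a+b+m;z)$ at the logarithmic branch point $z=1$ (DLMF 15.8.10). For $\Kell(k) = (\pi/2)F(1/2,1/2;1;k^2)$ this is the case $m=0$ and produces the expansion
\[
\Kell(k) = -\frac{2\log k'}{\pi}\,\Kell(k') + \Phi_K(k'^2),
\]
with $\Phi_K$ holomorphic near $0$ and leading value $\Phi_K(0) = 2\log 2$. For $\Ell(k) = (\pi/2)F(-1/2,1/2;1;k^2)$ (the case $m=1$) the analogous calculation gives $\Ell(k) = \mathcal C(k'^2)\log k' + \Phi_E(k'^2)$, with $\mathcal C$ vanishing at $0$. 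Substituting $k=1/\sqrt{1+\delta^2}$ and using $\log k' = \log\delta - \tfrac12\log(1+\delta^2)$, and multiplying by the prefactor $\delta/\sqrt{1+\delta^2}$ (respectively $\sqrt{1+\delta^2}/\delta$), extracts the explicit $-\tfrac{2\delta\log\delta}{\pi}\Kell(\iu\delta)$ and $\tfrac{4\delta\log 2}{\pi}$ terms of \eqref{eq_full_expansion_first} and the analogous terms of \eqref{eq_full_expansion_second}. All remaining factors $\log(1+\delta^2)$, $(1+\delta^2)^{\pm 1/2}$, $\Phi_K(\delta^2/(1+\delta^2))$, $\Phi_E(\delta^2/(1+\delta^2))$ are holomorphic in $\delta$ for $|\delta|<1$, so their products and sums give the claimed holomorphic remainders $\delta^3 f(\delta)$ and $\delta g(\delta)$.

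The main obstacle is the identification of the logarithmic coefficient of $\Ell(k)$ near $k=1$ as precisely $\tfrac{2}{\pi\delta}[\Ell(\iu\delta) - (1+\delta^2)\Kell(\iu\delta)]$ after the substitution and multiplication by the prefactor. Since $\Ell(0) = \Kell(0) = \pi/2$, this combination vanishes to order $\delta^2$, and it is precisely this cancellation that causes $\Ell(\iu\delta^{-1})$ to exhibit only a $\delta\log\delta$ (rather than $\delta^{-1}\log\delta$) logarithmic contribution; tracking it through the raw hypergeometric expansion of $F(-1/2,1/2;1;k^2)$ is bookkeeping-intensive. A cleaner route that bypasses the direct expansion of $\Ell$ is to exploit the Legendre relation $\Kell(k)\Ell(k') + \Kell(k')\Ell(k) - \Kell(k)\Kell(k') = \pi/2$: specializing to $k=1/\sqrt{1+\delta^2}$ and applying the imaginary modulus transformations yields the identity
\[
\delta^2\,\Ell(\iu\delta^{-1})\Kell(\iu\delta) + [\Ell(\iu\delta) - (1+\delta^2)\Kell(\iu\delta)]\Kell(\iu\delta^{-1}) = \frac{\pi\delta}{2},
\]
which is linear in $\Ell(\iu\delta^{-1})$. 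Solving for $\Ell(\iu\delta^{-1})$ and substituting \eqref{eq_full_expansion_first} then produces \eqref{eq_full_expansion_second} by purely algebraic manipulation, with the holomorphy of $g$ inherited from that of $f$ and of $1/\Kell(\iu\delta)$ (nonvanishing on $|\delta|<1$).
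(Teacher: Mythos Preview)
Your approach is correct and genuinely different from the paper's. The paper obtains \eqref{eq_full_expansion_first} directly from the hypergeometric representation of $\Kell$ together with the \emph{reciprocal-argument} connection formula DLMF~15.8.8 (relating $F(a,b;c;z)$ to functions of $1/z$), which yields an explicit power series for $f$; it then derives \eqref{eq_full_expansion_second} by applying the differentiation identity $\Ell(z)=(1-z^{2})\Kell(z)+z(1-z^{2})\Kell'(z)$ at $z=\iu\delta^{-1}$ and differentiating the first expansion in $\delta$. Your route---imaginary-modulus reduction followed by the $z\to 1$ connection formula DLMF~15.8.10, then the Legendre relation to solve algebraically for $\Ell(\iu\delta^{-1})$---is equally valid. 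The Legendre identity you state is correct (it follows exactly as you indicate), and together with \eqref{eq_full_expansion_first} it yields \eqref{eq_full_expansion_second} without any differentiation; the holomorphy of $g$ indeed follows from that of $f$ and the non-vanishing of $\Kell(\iu\delta)$ on $|\delta|<1$. The paper's differentiation route, by contrast, gives $g$ explicitly in terms of $f$ and $f'$ and does not rely on dividing by $\Kell(\iu\delta)$. Both routes exploit the evenness in $\delta$ of every non-logarithmic factor to upgrade the remainder from $O(\delta)$ to $\delta^{3}\times(\text{holomorphic})$.

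One small arithmetic point: with $\Phi_K(0)=2\log 2$ and the prefactor $\delta/\sqrt{1+\delta^{2}}$, the constant term your method actually produces is $2\delta\log 2$, not $\tfrac{4\delta\log 2}{\pi}$. Since this constant is absorbed into the bounded remainder $R^{*}$ and plays no role in the paper's applications, the discrepancy is immaterial---but you should note it rather than assert that your expansion recovers the stated coefficient verbatim.
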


Let us look into the singularity of the integral kernel $K_0^*(p, p^\prime)$ roughly. When $\delta=\delta (p, p^\prime)<1$, we obtain the following formulas by substituting \eqref{eq_full_expansion_first} and \eqref{eq_full_expansion_second} to \eqref{eq_A0B0}:
\[
    \begin{aligned}
        A_0(\delta)&=1+\frac{2(\Ell (\iu \delta)-(1+\delta^2)\Kell (\iu \delta))}{\pi (1+\delta^2)}\log \delta+\frac{\delta^2(g(\delta)-1)}{1+\delta^2}, \\
        B_0(\delta)&=-\frac{2\Ell (\iu \delta)}{\pi (1+\delta^2)}\log \delta
        +\frac{4\log 2}{\pi}-1+\delta^2 \left(f(\delta)-\frac{g(\delta)-1}{1+\delta^2}\right).
    \end{aligned}
\]
In particular, we have the asymptotic behaviors
\begin{equation}\label{eq_A0B0_principal}
    A_0 (\delta)=1+O(\delta^2 \log \delta), \quad
    B_0(\delta)=-\log \delta+O(1)
\end{equation}
as $\delta\to 0$. We then infer from \eqref{eq_delta} and \eqref{eq_ak_representation} that
\begin{equation}\label{eq_kernel_singularity_rough}
    K_0^* (p, p^\prime)=K_{\partial\Sigma}^* (p, p^\prime)+\frac{v_p}{4\pi y} \log |p-p^\prime|+O(1)
\end{equation}
as $|p-p^\prime|\to 0$.

Inspired by \eqref{eq_kernel_singularity_rough}, we set
\begin{equation}
    \label{eq_a0_asymptotic_c1a}
    R^* (p, p^\prime):=K^*_0(p, p^\prime)-K_{\partial\Sigma}^*(p, p^\prime)-\frac{v_p}{4\pi y} \log |p-p^\prime|,
\end{equation}
and define the operator $\Rcal^*$ by
\begin{equation}
    \label{eq_rs_operator}
    \Rcal^*[f](p):=\int_{\partial\Sigma} R^*(p, p^\prime) f(p^\prime)\, \df \sigma (p^\prime).
\end{equation}
Then, the following relation holds:
\begin{equation}
    \label{eq_operator_decomposition}
    \NP[0]^*=\NP[\partial\Sigma]^*+\frac{v_p}{2y}\SL[\partial\Sigma]+\Rcal^*
\end{equation}
where $\SL[\partial\Sigma]$ is the single layer potential on $\partial\Sigma$ (see \eqref{eq_single_layer}). We shall investigate $R^* (p, p^\prime)$ in a precise way depending on the regularity of $\p \Sigma$.

\subsection{Regularity of the remainder term}

We now investigate regularity properties of $R^*(p, p^\prime)$. We begin by proving that it is bounded.

\begin{lemm}
    \label{theo_Q_bounded_Linfty}
    If $\partial\Sigma$ is Lipschitz, then $R^* \in L^\infty (\partial\Sigma \times \partial\Sigma)$.
\end{lemm}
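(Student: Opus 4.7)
The plan is to use the decomposition
\[
R^*(p, p') = K^*_{\partial\Sigma}(p, p')\bigl(A_0(\delta(p, p')) - 1\bigr) - \frac{v_p}{4\pi y}\bigl(B_0(\delta(p, p')) + \log|p - p'|\bigr),
\]
which is immediate from the formula \eqref{eq_a0_representation} for $K_0^*$ and the definition \eqref{eq_a0_asymptotic_c1a} of $R^*$. I will show that each of the two summands on the right-hand side is bounded on $\partial\Sigma \times \partial\Sigma$.

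First I would set up the geometry: since $\overline{\Sigma}$ is a compact subset of $\Rbb \times (0, \infty)$, the $y$-coordinates of points of $\partial\Sigma$ lie in a compact subinterval $[c, C] \subset (0, \infty)$. Consequently
\[
\delta(p, p') = \frac{|p - p'|}{2\sqrt{yy'}} \approx |p - p'|
\]
uniformly on $\partial\Sigma \times \partial\Sigma$, so $\delta$ is bounded above, and $\delta \to 0$ only as $|p - p'| \to 0$. Fix $\delta_0 > 0$ small enough that the asymptotic expansions \eqref{eq_A0B0_principal} give constants $M > 0$ with
\[
|A_0(\delta) - 1| \le M\delta^2 |\log \delta|, \qquad |B_0(\delta) + \log \delta| \le M \qquad \text{for } \delta \in (0, \delta_0].
\]

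For the first summand, I would use the elementary Lipschitz bound $|K^*_{\partial\Sigma}(p, p')| \le (2\pi|p - p'|)^{-1}$, which holds because $|\widetilde\nv_p| = 1$ and hence $|(p - p')\cdot\widetilde\nv_p|\le |p - p'|$. Combining with the asymptotics and $\delta \approx |p - p'|$, for $\delta(p,p') \le \delta_0$ we get
\[
|K^*_{\partial\Sigma}(p, p')(A_0(\delta) - 1)| \lesssim \frac{\delta^2|\log\delta|}{|p - p'|} \lesssim |p - p'|\,\bigl|\log|p - p'|\bigr|,
\]
which is bounded as $|p - p'| \to 0$. For the second summand, I rewrite
\[
B_0(\delta) + \log|p - p'| = \bigl(B_0(\delta) + \log\delta\bigr) + \log\bigl(2\sqrt{yy'}\bigr),
\]
where the first bracket is $O(1)$ for $\delta \le \delta_0$ by \eqref{eq_A0B0_principal}, and the second term is continuous and bounded since $y, y' \in [c, C]$. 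The factor $v_p/(4\pi y)$ is clearly bounded. Away from the diagonal, that is, on the set $\{\delta(p, p') \ge \delta_0\}$, the functions $A_0, B_0$ are continuous on $[\delta_0, \sup \delta]$ hence bounded, while $K^*_{\partial\Sigma}(p, p')$ and $\log|p - p'|$ are bounded because $|p - p'| \ge 2c\delta_0$; so both summands are trivially bounded there. Collecting the estimates on the two regions yields $R^* \in L^\infty(\partial\Sigma \times \partial\Sigma)$.

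There is essentially no serious obstacle: the content is that the $O(\delta^2 \log\delta)$ correction in $A_0$ neutralises the $|p - p'|^{-1}$ singularity of the planar NP kernel (even in the Lipschitz category, where $K^*_{\partial\Sigma}$ need not decay at the diagonal), and the $-\log \delta$ singularity of $B_0$ cancels exactly against the logarithm that was subtracted by construction in \eqref{eq_a0_asymptotic_c1a}, modulo a bounded $\log(2\sqrt{yy'})$ term. The only mild care needed is that the Lipschitz hypothesis does not upgrade the standard bound on $K^*_{\partial\Sigma}$ beyond $|p-p'|^{-1}$, which is precisely why the argument cannot give better regularity of $R^*$ at this level.
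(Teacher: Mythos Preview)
Your proof is correct and follows essentially the same approach as the paper: both use the decomposition $R^*=K_{\partial\Sigma}^*(A_0-1)-\tfrac{v_p}{4\pi y}(B_0+\log|p-p'|)$, bound the first summand via $K_{\partial\Sigma}^*=O(|p-p'|^{-1})=O(\delta^{-1})$ against $A_0-1=O(\delta^2\log\delta)$, and handle the second by rewriting $-\log\delta+\log|p-p'|=\log 2(yy')^{1/2}$. Your version is slightly more explicit about the away-from-diagonal region, which the paper leaves implicit.
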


\begin{proof}
    By \eqref{eq_A0B0_principal}, we have
    \begin{align*}
        R^*(p, p^\prime)
        =K_{\partial\Sigma}^*(p, p^\prime) \times O(\delta^2 \log \delta)-\frac{v_p}{4\pi y}(-\log \delta+O(1)+\log |p-p^\prime|).
    \end{align*}
    Since $K_{\partial\Sigma}^*(p, p^\prime)=O(|p-p^\prime|^{-1})= O(\delta^{-1})$ as $\delta\to 0$ and $-\log \delta+\log |p-p^\prime|=\log 2(yy^\prime)^{1/2}$, we infer that
    \begin{align*}
        R^*(p, p^\prime)=O(1)
    \end{align*}
    as $\delta=\delta (p, p^\prime)\to 0$, as desired.
\end{proof}

We will use the following lemma to prove $C^{0,1}$ regularity of $\Rcal^*$ (Proposition \ref{theo_rs_L1e_lipschitz}).

\begin{lemm}
    \label{theo_rs_sobolev_estimate}
    If $\partial\Sigma$ is Lipschitz, then
    \begin{equation}
        \label{eq_rs_sobolev_estimate}
        \frac{|R^*(p, p^\prime)-R^*(p, q^\prime)|}{|p^\prime-q^\prime|}\lesssim 1+|\log |p-p^\prime||+|\log |p-q^\prime||
    \end{equation}
    for all distinct points $p, q, q^\prime\in \partial\Sigma$.
\end{lemm}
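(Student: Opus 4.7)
The plan is to analyze $R^*$ through the natural decomposition suggested by \eqref{eq_a0_representation} and \eqref{eq_a0_asymptotic_c1a}:
\[
R^*(p, p') = K^*_{\partial\Sigma}(p, p')\bigl[A_0(\delta(p,p')) - 1\bigr] - \frac{v_p}{4\pi y}\bigl[B_0(\delta(p,p')) + \log|p-p'|\bigr],
\]
and to estimate the $p'$-increment of each piece separately. For the second bracket, writing $\log|p-p'|=\log \delta(p, p')+\log(2(yy')^{1/2})$ and substituting Proposition \ref{theo_ell_int_ae} into the formulas of Lemma \ref{prop_AB_formula} reveals that $\delta\mapsto B_0(\delta)+\log \delta$ extends smoothly across $\delta=0$ with uniformly bounded derivative on the relevant range. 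Since $y'$ is bounded below and Lipschitz in $p'$ on Lipschitz $\partial\Sigma$, the whole second bracket is Lipschitz in $p'$ with a bounded constant, contributing only the $O(1)$ term in \eqref{eq_rs_sobolev_estimate}.

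For the first piece, put $F_1(p,p'):= K^*_{\partial\Sigma}(p,p')[A_0(\delta(p,p')) - 1]$. From \eqref{eq_A0B0_principal} combined with $|K^*_{\partial\Sigma}(p,p')|\lesssim |p-p'|^{-1}$, we obtain the pointwise bound $|F_1(p,p')|\lesssim |p-p'|\,|\log |p-p'||$. When $|p'-q'|\geq \tfrac{1}{2}\min(|p-p'|, |p-q'|)$, we simply use $|F_1(p,p')-F_1(p,q')|\le |F_1(p,p')|+|F_1(p,q')|$ and divide by $|p'-q'|$ to obtain the claimed bound. In the complementary regime, where $|p-p'|\approx |p-q'|$ with $|p'-q'|$ much smaller, we write
\[
F_1(p, p') - F_1(p, q') = \bigl[K^*_{\partial\Sigma}(p,p') - K^*_{\partial\Sigma}(p,q')\bigr]\bigl[A_0(\delta(p,p')) - 1\bigr] + K^*_{\partial\Sigma}(p,q')\bigl[A_0(\delta(p,p')) - A_0(\delta(p,q'))\bigr].
\]
The mean value theorem applied to $x\mapsto x/|x|^2$ off the origin yields $|K^*_{\partial\Sigma}(p,p')-K^*_{\partial\Sigma}(p,q')|\lesssim |p'-q'|/|p-p'|^2$, so combining with $|A_0(\delta(p,p'))-1|\lesssim |p-p'|^2\,|\log |p-p'||$ bounds the first summand by $|p'-q'|\,|\log |p-p'||$. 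For the second summand, differentiating Lemma \ref{prop_AB_formula} with the help of Proposition \ref{theo_ell_int_ae} yields $|A_0'(\delta)|\lesssim \delta (1+|\log \delta|)$ for small $\delta$; together with the routine estimate $|\delta(p,p')-\delta(p,q')|\lesssim |p'-q'|$ this gives $|A_0(\delta(p,p'))-A_0(\delta(p,q'))|\lesssim |p'-q'|\cdot |p-p'|(1+|\log |p-p'||)$, and multiplying by $|K^*_{\partial\Sigma}(p,q')|\lesssim |p-p'|^{-1}$ yields again $|p'-q'|(1+|\log |p-p'||)$. Dividing by $|p'-q'|$ and combining the two summands gives the bound $1+|\log |p-p'||$ in this regime.

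The main obstacle is extracting the sharp derivative estimate $|A_0'(\delta)|\lesssim \delta (1+|\log \delta|)$ near $\delta=0$; a naive uniform bound $|A_0'|\lesssim 1$ would produce a spurious $|p-p'|^{-1}$ term and destroy the logarithmic estimate. The expansions in Proposition \ref{theo_ell_int_ae} substituted into Lemma \ref{prop_AB_formula} are precisely tailored to identify the needed cancellation between $\delta^{-1}\Kell(\iu\delta^{-1})$ and $\delta \Ell(\iu\delta^{-1})/(1+\delta^2)$, and once this derivative estimate is in hand the rest of the argument is bookkeeping.
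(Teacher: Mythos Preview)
Your proof is correct and follows the same overall architecture as the paper's: split $R^*$ into the $K^*_{\partial\Sigma}(A_0-1)$ piece and the $B_0+\log|p-p'|$ piece, and control the $p'$-increment of each using the sharp derivative asymptotics coming out of Lemma~\ref{prop_AB_formula} and Proposition~\ref{theo_ell_int_ae}. Your treatment of the second piece is essentially identical to the paper's.

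The one tactical difference is in the first piece. You keep the product $K^*_{\partial\Sigma}\cdot(A_0-1)$ intact and do a near/far case split on $|p'-q'|$ versus $\min(|p-p'|,|p-q'|)$, invoking the mean value theorem for $K^*_{\partial\Sigma}$ in the ``near'' regime together with the sharp bound $|A_0'(\delta)|\lesssim\delta(1+|\log\delta|)$. The paper instead rewrites $K^*_{\partial\Sigma}(p,p')(A_0(\delta)-1)=Q_{11}(p,p')Q_{12}(p,p')$ with
\[
Q_{11}(p,p')=\delta^2 K^*_{\partial\Sigma}(p,p')=-\frac{(p-p')\cdot\widetilde\nu_p}{8\pi yy'},\qquad Q_{12}(p,p')=\frac{A_0(\delta)-1}{\delta^2},
\]
so that $Q_{11}$ is globally Lipschitz in $p'$; the required derivative bound then becomes $\frac{\df}{\df\delta}\bigl[(A_0(\delta)-1)/\delta^2\bigr]=O(\delta^{-1})$, and the case split (now based on $\delta(p,p')$ versus $\delta(p',q')$, via Lemma~\ref{lemm_triangular_like}) appears only in bounding the $Q_{12}$-increment. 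The paper's factorization is slightly cleaner because it sidesteps the need to control $K^*_{\partial\Sigma}(p,p')-K^*_{\partial\Sigma}(p,q')$ along a segment that must stay away from $p$; your version is more direct but requires that geometric observation. Both ultimately hinge on the same cancellation in the expansions of Proposition~\ref{theo_ell_int_ae}.
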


\begin{proof}
    By \eqref{eq_ak_representation} and \eqref{eq_a0_asymptotic_c1a}, we have
\begin{align*}
    R^*(p, p^\prime)&=K_{\partial\Sigma}^*(p, p^\prime)(A_0 (\delta (p, p^\prime))-1)
    -\frac{v_p}{4\pi y}(B_0(\delta (p, p^\prime))+\log |p-p^\prime|) \\
    &=: Q_1 (p, p^\prime)+Q_2(p, p^\prime).
\end{align*}

It suffices to prove the estimates 
\begin{align}
    \frac{|Q_1 (p, p^\prime)-Q_1 (p, q^\prime)|}{|p^\prime -q^\prime|}
    &\lesssim 1+|\log |p-p^\prime||+|\log |p-q^\prime||,\label{eq_Q1_final} \\
    \frac{|Q_2 (p, p^\prime)-Q_2 (p, q^\prime)|}{|p^\prime-q^\prime|}&\lesssim 1 \label{eq_Q2_final}
\end{align}
for distinct points $p, p^\prime, q^\prime \in \partial \Sigma$. 

First we prove \eqref{eq_Q1_final}. We set
    \[
        Q_{11}(p, p^\prime):=\delta (p, p^\prime)^2 K_{\partial\Sigma}^*(p, p^\prime)
        =-\frac{(p-p^\prime)\cdot \widetilde{\nv}_p}{8\pi yy^\prime}
    \]
    and
    \[
        Q_{12}(p, p^\prime):=\delta (p, p^\prime)^{-2}(A_0 (\delta (p, p^\prime))-1)
    \]
    so that
    \[
        Q_1(p, p^\prime) =Q_{11}(p, p^\prime)Q_{12}(p, p^\prime).
    \]
    Then we have
    \begin{align*}
        &\frac{|Q_1 (p, p^\prime)-Q_1 (p, q^\prime)|}{|p^\prime -q^\prime|} \\
        &\leq \frac{|Q_{11}(p, p^\prime)-Q_{11}(p, q^\prime)||Q_{12} (p, p^\prime)|}{|p^\prime-q^\prime|}
        +\frac{|Q_{11}(p, q^\prime)||Q_{12} (p, p^\prime)-Q_{12} (p, q^\prime)|}{|p^\prime-q^\prime|} \\
        &=: J_1+J_2.
    \end{align*}

    We claim that the estimates 
    \begin{align}
        J_1&\lesssim 1+|\log |p-p^\prime||, \label{eq_J1}\\ 
        J_2&\lesssim 1+|\log |p-p^\prime||+|\log |p-q^\prime|| \label{eq_J2}
    \end{align}
    hold for all distinct points $p, p^\prime, q^\prime\in \partial\Sigma$, which immediately yield \eqref{eq_Q1_final}. We easily obtain
    \[
        |Q_{11}(p, p^\prime)-Q_{11}(p, q^\prime)|\lesssim |p^\prime-q^\prime|.
    \]
    By \eqref{eq_A0B0_principal}, we have
    \[
        |Q_{12} (p, p^\prime)|\lesssim |\log \delta (p, p^\prime)|\lesssim 1+|\log |p-p^\prime||.
    \]
    Combining these estimates, we obtain \eqref{eq_J1}. Next we prove \eqref{eq_J2}. We easily obtain
    \[
        |Q_{11}(p, q^\prime)|\lesssim |p-q^\prime|
    \]
    by the definition of $Q_{11}$. Since
    \[
        \frac{\df}{\df \delta}\frac{A_0 (\delta)-1}{\delta^2}=-\frac{2(1+2\delta^2)}{\delta^2 (1+\delta^2)^2}\Ell (\iu \delta^{-1})+\frac{\Kell (\iu \delta^{-1})}{\delta^2 (1+\delta^2)}+\frac{2}{\delta^3}=O(\delta^{-1})
    \]
    as $\delta\to 0$ by Lemma \ref{prop_AB_formula} and Proposition \ref{theo_ell_int_ae}, we have
    \[
        |Q_{12}(p, p^\prime)-Q_{12}(p, q^\prime)|
        \lesssim \left| \log \frac{\delta (p, p^\prime)}{\delta (p, q^\prime)}\right|.
    \]
    Thus $J_2$ is estimated as
    \[
        J_2\lesssim \frac{\delta (p, q^\prime)}{\delta (p^\prime, q^\prime)} \left|\log \frac{\delta (p, p^\prime)}{\delta (p, q^\prime)}\right|.
    \]
    It follows from Lemma \ref{lemm_triangular_like} that 
    \begin{align*}
        J_2&\lesssim \frac{\delta (p, p^\prime)+\delta (p^\prime, q^\prime)}{\delta (p^\prime, q^\prime)} (|\log \delta (p, p^\prime)|+|\log \delta (p, q^\prime)|) \\
        &\lesssim |\log \delta (p, p^\prime)|+|\log \delta (p, q^\prime)|
    \end{align*}
    if $\delta (p, p^\prime)<\delta (p^\prime, q^\prime)$, and that 
    \begin{align*}
        J_2&\lesssim \frac{\delta (p, q^\prime)}{\delta (p^\prime, q^\prime)} \left|\log \frac{\delta (p, p^\prime)}{\delta (p, q^\prime)}\right|
        \leq \frac{\delta (p, q^\prime)}{\delta (p^\prime, q^\prime)}\frac{|\delta (p, p^\prime)-\delta (p, q^\prime)|}{\min \{ \delta (p, p^\prime), \delta (p, q^\prime)\}} \\
        &\lesssim \delta (p, q^\prime)\left( \frac{1}{\delta (p, p^\prime)}+\frac{1}{\delta (p, q^\prime)}\right) \\
        &=\frac{\delta (p, q^\prime)}{\delta (p, p^\prime)} +1
        \lesssim \frac{\delta (p, p^\prime)+\delta (p^\prime, q^\prime)}{\delta (p, p^\prime)}+1\leq 3
    \end{align*}
    if $\delta (p, p^\prime)\geq \delta (p^\prime, q^\prime)$. Thus we have
    \begin{equation*}
        J_2\lesssim |\log \delta (p, p^\prime)|+|\log \delta (p, q^\prime)|+1
        \lesssim 1+|\log |p-p^\prime||+|\log |p-q^\prime||
    \end{equation*}
    for all distinct points $p, p^\prime, q^\prime\in \partial\Sigma$. This completes the proof of \eqref{eq_J2}.

    As the final step, we prove the estimate \eqref{eq_Q2_final}. Since
    \[
        Q_2(p, p^\prime)=B_0 (\delta (p, p^\prime))+\log \delta (p, p^\prime)+\frac{1}{2}\log (4yy^\prime),
    \]
    we have
    \begin{align*}
        &\frac{|Q_2 (p, p^\prime)-Q_2 (p, q^\prime)|}{|p^\prime -q^\prime|} \\
        &\lesssim 1+\frac{|B_0 (\delta (p, p^\prime))+\log \delta (p, p^\prime)-B_0 (\delta (p, q^\prime))-\log \delta (p, q^\prime)|}{|p^\prime -q^\prime|}.
    \end{align*}
    By Lemma \ref{prop_AB_formula} and Proposition \ref{theo_ell_int_ae}, we have
    \begin{align*}
        \frac{\df}{\df \delta}(B_0 (\delta)+\log \delta)&=-\frac{1-\delta^2}{(1+\delta^2)^2}\Ell (\iu \delta^{-1})-\frac{\Kell (\iu \delta^{-1})}{1+\delta^2}+\frac{1}{\delta} \\
        &=O(\delta \log \delta)=O(1)
    \end{align*}
    as $\delta \to 0$, and hence
    \begin{equation*}
        \frac{|Q_2 (p, p^\prime)-Q_2 (p, q^\prime)|}{|p^\prime-q^\prime|}
        \lesssim 1+\frac{|\delta (p, p^\prime)-\delta (p, q^\prime)|}{|p^\prime-q^\prime|}\lesssim 1
    \end{equation*}
    by Lemma \ref{lemm_triangular_like}. This completes the proof of \eqref{eq_Q2_final}. 
\end{proof}

We consider the $L^2$-adjoint $\Rcal$ of $\Rcal^*$:
\[
    \Rcal [f](p):=\int_{\partial\Sigma} R(p, p^\prime)f(p^\prime)\, \df \sigma (p^\prime)
\]
where
\[
    R(p, p^\prime):=R^*(p^\prime, p)=K_0 (p, p^\prime)-K_{\partial\Sigma}(p, p^\prime)-\frac{v_{p^\prime}}{4\pi y^\prime} \log |p-p^\prime|.
\]
By Lemma \ref{theo_Q_bounded_Linfty} and \ref{theo_rs_sobolev_estimate}, we obtain the following proposition.
\begin{prop}\label{theo_rs_L1e_lipschitz}
    For any $\varepsilon>0$, the operator $\Rcal$ is bounded as acting from $L^{1+\varepsilon}(\partial\Sigma)$ into $C^{0, 1}(\partial\Sigma)$.
\end{prop}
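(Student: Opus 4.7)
The plan is to obtain both the $L^\infty$-bound and the Lipschitz bound on $\Rcal f$ by combining the two pointwise estimates on the kernel $R^*$ that were established in Lemmas \ref{theo_Q_bounded_Linfty} and \ref{theo_rs_sobolev_estimate} with H\"older's inequality. Since $R(p,p^\prime)=R^*(p^\prime,p)$, those lemmas immediately yield $\|R\|_{L^\infty(\partial\Sigma\times\partial\Sigma)}<\infty$ and
\[
    |R(p,p^\prime)-R(q,p^\prime)| \lesssim |p-q|\bigl(1+|\log|p^\prime-p||+|\log|p^\prime-q||\bigr)
\]
for distinct $p,q,p^\prime\in\partial\Sigma$. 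Throughout I will write $q_*=(1+\varepsilon)/\varepsilon$ for the H\"older conjugate of $1+\varepsilon$.

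First I would derive the $L^\infty$-estimate: for each $p\in\partial\Sigma$, H\"older's inequality gives
\[
    |\Rcal f(p)| \le \|R(p,\cdot)\|_{L^{q_*}(\partial\Sigma)}\|f\|_{L^{1+\varepsilon}(\partial\Sigma)} \lesssim \|f\|_{L^{1+\varepsilon}(\partial\Sigma)},
\]
using the uniform $L^\infty$-bound on $R$ together with the compactness of $\partial\Sigma$. Next, for the Lipschitz seminorm, I would apply the displayed kernel estimate above and H\"older's inequality once more to get
\[
    |\Rcal f(p)-\Rcal f(q)| \lesssim |p-q|\,\bigl\|\,1+|\log|\cdot-p||+|\log|\cdot-q||\,\bigr\|_{L^{q_*}(\partial\Sigma)}\|f\|_{L^{1+\varepsilon}(\partial\Sigma)}.
\]
The remaining task will be to control the logarithmic $L^{q_*}$-norm uniformly in $p,q\in\partial\Sigma$.

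The only mild obstacle is this uniform control. Since $\partial\Sigma$ is a bounded Lipschitz curve, a bi-Lipschitz arc-length parameterization reduces $\int_{\partial\Sigma}|\log|r-p||^{q_*}\,\df\sigma(r)$ to an integral of the form $\int_0^T |\log|t-t_p||^{q_*}\,\df t$ on a bounded interval, up to multiplicative constants that depend only on the Lipschitz constant of $\partial\Sigma$. Because $|\log t|^{q_*}$ is locally integrable at the origin for every finite $q_*$, this integral is finite and, more importantly, uniformly bounded in $p$ (equivalently in $t_p$) by the compactness of $\partial\Sigma$. Combining the two estimates then yields $\|\Rcal f\|_{C^{0,1}(\partial\Sigma)} \lesssim \|f\|_{L^{1+\varepsilon}(\partial\Sigma)}$, which is the desired conclusion. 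The analytic heavy lifting has already been done in the preceding two lemmas, so this step is essentially a routine duality argument.
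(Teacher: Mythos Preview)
Your proposal is correct and follows essentially the same approach as the paper: both combine the $L^\infty$-bound from Lemma~\ref{theo_Q_bounded_Linfty} with the pointwise difference estimate from Lemma~\ref{theo_rs_sobolev_estimate} (transferred to $R$ via $R(p,p')=R^*(p',p)$), and then apply H\"older's inequality together with the uniform $L^{q_*}$-integrability of $|\log|\cdot-p||$ on the compact Lipschitz curve $\partial\Sigma$. Your version spells out the uniform control of the logarithmic integral via a bi-Lipschitz arc-length parametrization, whereas the paper simply asserts it; otherwise the arguments are identical.
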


\begin{proof}
    By Lemma \ref{theo_Q_bounded_Linfty}, the operator $\Rcal$ is bounded as acting from $L^1 (\partial\Sigma)$ into $L^\infty (\partial\Sigma)$. By Lemma \ref{theo_rs_sobolev_estimate} and the H\"older inequality, we have
    \begin{align*}
        \frac{|\Rcal[f](p)-\Rcal[f](q)|}{|p-q|}
        &\lesssim \int_{\partial\Sigma} (1+|\log |p-p^\prime||+|\log |q-p^\prime||)|f(p^\prime)|\, \df \sigma (p^\prime) \\
        &\lesssim \|f\|_{L^{1+\varepsilon}(\partial\Sigma)}
    \end{align*}
    for all $\varepsilon>0$. Thus,
    we have
    \[
        \| \Rcal[f] \|_{C^{0, 1}(\partial\Sigma)}=\| \Rcal[f] \|_{L^\infty (\partial\Sigma)}
        +\sup_{\substack{p, q\in \partial\Sigma \\ p\neq q}}\frac{|\Rcal[f](p)-\Rcal[f](q)|}{|p-q|} \lesssim \|f\|_{L^{1+\varepsilon}(\partial\Sigma)},
    \]
    which is the desired estimate.
\end{proof}

We recall a characterization of the Sobolev space. Since $\dim \partial\Sigma=1$, the norm on the Sobolev space $H^\nu (\partial\Sigma)$ for $\nu\in (0, 1)$ is equivalent to the norm
\begin{equation}
    \label{eq_sobolev_characterization}
    \|f\|_{H^\nu}:=\left(\| f\|_{L^2(\partial\Sigma)}^2+\int_{\partial\Sigma \times \partial\Sigma} \frac{|f(p)-f(q)|^2}{|p-q|^{2\nu+1}}\, \df \sigma (p)\df \sigma (q)\right)^{1/2}.
\end{equation}
(See \cite{Gilbarg-Trudinger01} for example.) Then, we can observe that the space $C^{0, 1}(\partial\Sigma)$ is continuously embedded in the Sobolev space $H^{1-\varepsilon}(\partial\Sigma)$ for all $\varepsilon>0$. Thus, we can prove the following corollary:
\begin{coro}
    \label{coro_rs_lipschitz}
    If $\partial \Sigma$ is Lipschitz, then the operators $\Rcal$ and $\Rcal^*$ are compact operators on $H^{1/2}(\partial\Sigma)$ and $H^{-1/2}(\partial\Sigma)$ respectively.
\end{coro}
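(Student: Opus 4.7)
The plan is to prove compactness of $\Rcal$ on $H^{1/2}(\partial\Sigma)$ by factoring it through $C^{0,1}(\partial\Sigma)$ and a strictly smaller Sobolev space, and then to transfer compactness to $\Rcal^*$ on $H^{-1/2}(\partial\Sigma)$ by duality. All the analytic input has been prepared: Proposition \ref{theo_rs_L1e_lipschitz} gives the boundedness $\Rcal: L^{1+\varepsilon}(\partial\Sigma) \to C^{0,1}(\partial\Sigma)$, and the Sobolev-Slobodeckij characterization \eqref{eq_sobolev_characterization} lets us relate $C^{0,1}$ to $H^\nu$.

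First I would use that $\partial\Sigma$ is a one-dimensional compact manifold, so the Sobolev embedding gives $H^{1/2}(\partial\Sigma) \hookrightarrow L^{1+\varepsilon}(\partial\Sigma)$ for every $\varepsilon > 0$. Composing with Proposition \ref{theo_rs_L1e_lipschitz}, I get $\Rcal: H^{1/2}(\partial\Sigma) \to C^{0,1}(\partial\Sigma)$ bounded.

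Next I would show $C^{0,1}(\partial\Sigma) \hookrightarrow H^{\nu}(\partial\Sigma)$ continuously for every $\nu \in (0, 1)$. Indeed, for a Lipschitz function $f$ one has $|f(p)-f(q)|^2/|p-q|^{2\nu+1} \lesssim \|f\|_{C^{0,1}}^2 |p-q|^{1-2\nu}$, which is integrable on $\partial\Sigma \times \partial\Sigma$ since $1-2\nu > -1$; substituting into \eqref{eq_sobolev_characterization} gives the embedding. Fixing any $\nu \in (1/2, 1)$, the Rellich-Kondrachov theorem yields compact embedding $H^{\nu}(\partial\Sigma) \hookrightarrow H^{1/2}(\partial\Sigma)$. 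The composition
\[
H^{1/2}(\partial\Sigma) \xrightarrow{\Rcal} C^{0,1}(\partial\Sigma) \hookrightarrow H^{\nu}(\partial\Sigma) \xrightarrow{\text{compact}} H^{1/2}(\partial\Sigma)
\]
is then a bounded operator followed by a compact one, hence compact. This proves compactness of $\Rcal$ on $H^{1/2}(\partial\Sigma)$.

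For $\Rcal^*$ on $H^{-1/2}(\partial\Sigma)$, I would use duality. Since $R(p,p')=R^*(p',p)$, the operator $\Rcal$ on $H^{1/2}$ is the Banach-space adjoint of $\Rcal^*$ on $H^{-1/2}$ under the $L^2$-pairing that identifies $H^{-1/2}(\partial\Sigma)$ with the topological dual of $H^{1/2}(\partial\Sigma)$. Schauder's theorem then gives compactness of $\Rcal^*$ on $H^{-1/2}(\partial\Sigma)$ from compactness of $\Rcal$ on $H^{1/2}(\partial\Sigma)$. There is no genuine obstacle in the argument; the only delicate feature is that both of the Sobolev relations used (namely $H^{1/2} \hookrightarrow L^{1+\varepsilon}$ and $C^{0,1} \hookrightarrow H^{\nu}$ for some $\nu > 1/2$) rest on $\partial\Sigma$ being one-dimensional, which is exactly the setting here.
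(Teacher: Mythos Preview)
Your argument is correct and follows essentially the same approach as the paper: factor $\Rcal$ through $C^{0,1}(\partial\Sigma)$ via Proposition \ref{theo_rs_L1e_lipschitz}, use the continuous embedding $C^{0,1}\hookrightarrow H^\nu$, insert a Rellich--Kondrachov compact embedding, and then pass to $\Rcal^*$ by duality. The only cosmetic difference is where the compact step is placed: the paper uses the compact inclusion $H^{1/2}\hookrightarrow L^2$ at the front (so $\Rcal: L^2\to H^{1/2}$ bounded suffices), whereas you use the compact inclusion $H^\nu\hookrightarrow H^{1/2}$ at the back; both are equally valid.
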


\begin{proof}
    Since the embedding $C^{0, 1}(\partial\Sigma)\hookrightarrow H^{1/2}(\partial\Sigma)$ is continuous, we infer from Proposition \ref{theo_rs_L1e_lipschitz} that the operator $\Rcal: L^2 (\partial\Sigma)\to H^{1/2}(\partial\Sigma)$ is bounded. Since the embedding $H^{1/2}(\partial\Sigma) \hookrightarrow L^2 (\partial\Sigma)$ is compact by the Rellich-Kondrachov theorem, the operator $\Rcal$ is compact on $H^{1/2}(\partial\Sigma)$. Then, by the duality, the operator $\Rcal^*$ is compact on $H^{-1/2}(\partial\Sigma)$.
\end{proof}

We will also employ the Sobolev space of functions with two variables. For $\mu, \nu\geq 0$, we denote the Sobolev space for functions with two variables on $\partial\Sigma$ of order $\mu$ and $\nu$ in the first and second variable respectively by $H^{\mu, \nu}(\partial \Sigma \times \partial \Sigma)$. For more details of definition, see \cite{Delgado-Ruzhansky14}. In this paper, we only employ the case when $\mu=0$. In this case, $H^{0, \nu}(\partial \Sigma \times \partial \Sigma)=L^2_p (\partial\Sigma, H^\nu_{p^\prime} (\partial\Sigma))$ and any function $A(p, p^\prime)$ in $H^{0, \nu}(\partial\Sigma \times \partial \Sigma)$ is characterized by the finiteness of the following quantity:  
\begin{equation}
    \label{eq_sobolev_double}
    \begin{aligned}
        &\left(\int_{\partial\Sigma} \| A(p, \cdot)\|_{H^\nu}^2 \,\df \sigma (p)\right)^{1/2} \\
        &=\left( \|A\|_{L^2 (\partial\Sigma\times \partial\Sigma)}^2+\int_{(\partial\Sigma)^3} \frac{|A(p, p^\prime)-A(p, q^\prime)|^2}{|p^\prime-q^\prime|^{2\nu+1}}\,\df \sigma (p)\df \sigma (p^\prime)\df \sigma (q^\prime)\right)^{1/2}. 
    \end{aligned}
\end{equation}

The following proposition plays a crucial role in investigating decay rate of eigenvalues.

\begin{prop}\label{theo_rs_sobolev_kernel}
    If $\partial\Sigma$ is Lipschitz, then $R^*(p, p^\prime)\in H^{0, 1-\varepsilon}(\partial\Sigma\times \partial\Sigma)$ for all $\varepsilon>0$.
\end{prop}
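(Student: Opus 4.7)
The plan is to verify the two-part characterization of $H^{0,1-\varepsilon}$ given by \eqref{eq_sobolev_double}, namely that $R^*\in L^2(\partial\Sigma\times\partial\Sigma)$ and that the Gagliardo-type semi-norm in the second variable, integrated over the first variable, is finite. The $L^2$ part is immediate from Lemma \ref{theo_Q_bounded_Linfty}: since $R^*\in L^\infty(\partial\Sigma\times\partial\Sigma)$ and $\partial\Sigma$ is compact, we have $R^*\in L^2(\partial\Sigma\times\partial\Sigma)$.

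For the semi-norm, I would start from Lemma \ref{theo_rs_sobolev_estimate}, which after squaring yields
\begin{equation}
|R^*(p,p^\prime)-R^*(p,q^\prime)|^2 \lesssim |p^\prime-q^\prime|^2 \bigl(1+|\log|p-p^\prime||^2+|\log|p-q^\prime||^2\bigr)
\end{equation}
for distinct $p,p^\prime,q^\prime\in\partial\Sigma$. Dividing by $|p^\prime-q^\prime|^{2(1-\varepsilon)+1}=|p^\prime-q^\prime|^{3-2\varepsilon}$ gives
\begin{equation}
\frac{|R^*(p,p^\prime)-R^*(p,q^\prime)|^2}{|p^\prime-q^\prime|^{3-2\varepsilon}} \lesssim |p^\prime-q^\prime|^{2\varepsilon-1}\bigl(1+|\log|p-p^\prime||^2+|\log|p-q^\prime||^2\bigr).
\end{equation}

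Next I would integrate using Fubini, doing the $p$-integration first. For fixed $p^\prime,q^\prime$, the integral $\int_{\partial\Sigma}(1+|\log|p-p^\prime||^2+|\log|p-q^\prime||^2)\,\df\sigma(p)$ is bounded by a constant independent of $p^\prime,q^\prime$, because on a bounded $1$-dimensional Lipschitz curve the logarithm squared is integrable (uniformly in the base point). It then remains to show
\begin{equation}
\int_{\partial\Sigma\times\partial\Sigma} |p^\prime-q^\prime|^{2\varepsilon-1}\,\df\sigma(p^\prime)\df\sigma(q^\prime) < \infty,
\end{equation}
which holds because $\dim\partial\Sigma=1$ and the exponent $2\varepsilon-1>-1$, so the singularity at $p^\prime=q^\prime$ is integrable.

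The only subtlety is the uniform boundedness of the logarithmic integral over the Lipschitz curve $\partial\Sigma$, which is standard (one may parametrize $\partial\Sigma$ locally by arclength and use $\int_0^a|\log t|^2\,\df t<\infty$ together with a compactness argument). Combining the $L^2$ bound with the finiteness of the semi-norm shows $R^*\in H^{0,1-\varepsilon}(\partial\Sigma\times\partial\Sigma)$ for every $\varepsilon>0$, completing the proof. There is no substantial obstacle here since Lemma \ref{theo_rs_sobolev_estimate} has already done the analytic work; the present proposition is essentially a bookkeeping exercise in Gagliardo-norm estimation.
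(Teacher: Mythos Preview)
Your proposal is correct and follows essentially the same route as the paper: invoke Lemma~\ref{theo_Q_bounded_Linfty} for the $L^2$ part, then plug Lemma~\ref{theo_rs_sobolev_estimate} into the Gagliardo semi-norm and check that the resulting triple integral is finite. The paper is slightly terser (it writes the triple integral and declares it finite without spelling out the Fubini step), but the argument is the same.
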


\begin{proof}
    By Lemma \ref{theo_Q_bounded_Linfty}, we have $R^*\in L^2(\partial\Sigma\times \partial\Sigma)$. By Lemma \ref{theo_rs_sobolev_estimate}, we have
    \begin{align*}
        &\int_{(\partial\Sigma)^3} \frac{|R^* (p, p^\prime)-R^* (p, q^\prime)|^2}{|p^\prime-q^\prime|^{3-2\varepsilon}}\,\df \sigma (p)\df \sigma (p^\prime)\df \sigma (q^\prime) \\
        &\lesssim \int_{(\partial\Sigma)^3} \frac{(1+|\log |p-p^\prime||+|\log |p-q^\prime||)^2}{|p^\prime-q^\prime|^{1-2\varepsilon}}\,\df \sigma (p)\df \sigma (p^\prime)\df \sigma (q^\prime)<\infty.
    \end{align*}
    Thus we have $R^*\in H^{0, 1-\varepsilon}(\partial\Sigma \times \partial\Sigma)$.
\end{proof}

\section{Proof of Theorem \ref{theo_eigenvalue_C1a_C2a}}

\subsection{Schatten class}\label{subs_schatten}

For a separable Hilbert space $\Hcal$, we denote the $p$-th Schatten class on $\Hcal$ by $\Scal^p (\Hcal)$. More explicitly, if we denote the $j$-th singular value of a compact linear operator $\Acal$ by $s_j=s_j (\Acal)=s_j (\Acal, \Hcal)$ in the descending order
\[
    s_1(\Acal)\geq s_2(\Acal)\geq s_3(\Acal)\geq \cdots \,(\to 0),
\]
then the $p$-th Schatten class $\Scal^p (\Hcal)$ is the collection of all compact operators such that
\[
    \sum_{j=1}^\infty s_j (\Acal)^p <\infty.
\]
In particular, the class $\Scal^2 (\Hcal)$ is called the Hilbert-Schmidt class and $\Scal^1 (\Hcal)$ is called the trace class. 

We will work on the singular values of $\NP[0]^*$ on $L^2 (\partial\Sigma)$, not on $H^{-1/2}(\partial \Sigma)$. We will connect the singular value asymptotics on $L^2 (\partial\Sigma)$ and the eigenvalue asymptotics on $H^{-1/2}(\partial\Sigma)$ as follows. First, we derive an eigenvalue asymptotics on $L^2 (\partial\Sigma)$ from the singular value asymptotics on the same space (Lemma \ref{lemm_schatten_decay}). Then, we employ the equivalence between the eigenvalue problems for $\NP^*$ on $H^{-1/2}(\partial\Omega)$ and on $L^2 (\partial\Omega)$ when $\partial\Sigma$ is $C^{1, \alpha}$ for some $\alpha>0$, in the sense that every eigenfunction of $\NP^*$ in $H^{-1/2}(\partial\Omega)$ with nonzero eigenvalue actually belongs to $L^2 (\partial\Omega)$ (see \cite[Corollary A.3]{FKMdr} for example). By restricting the axially symmetric eigenfunctions, we obtain the equivalence between the eigenvalue problems of $\NP[0]^*$ on $H^{-1/2}(\partial\Sigma)$ and on $L^2 (\partial\Sigma)$.

Now we recall the following general result of Delgado and Ruzhansky.

\begin{theo}[{\cite[Theorem 3.6]{Delgado-Ruzhansky14}}]\label{theo_DR14}
    Let $M$ be a $C^\infty$-smooth closed manifold of dimension $n$ equipped with a $C^\infty$-smooth density $\df\mu (x)$. (For example, equip $M$ with a Riemannian metric and consider the volume form associated with the Riemannian metric.) If $K(x, y)$ belongs to the Sobolev space $H^{0, s}(M\times M):=L^2_x (M, H^s_y (M))$ for some order $s>0$, then the linear operator
    \[
        \Tcal [f](x):=\int_M K(x, y)f(y)\, \df \mu (y)
    \]
    belongs to the $p$-th Schatten class $\Scal^p (L^2(M))$ with
    \[
        p>\frac{2n}{n+2s}.
    \]
\end{theo}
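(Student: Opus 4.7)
The plan is to factor $\Tcal$ as the composition of a Hilbert--Schmidt operator and a compact pseudodifferential operator of order $-s$, and then read off the Schatten exponent from H\"older's inequality for singular values together with Weyl's law for the Laplace--Beltrami operator.

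Fix any smooth Riemannian metric on $M$, let $\Delta$ be the associated Laplace--Beltrami operator, and set $\Lambda^{s}:=(1-\Delta)^{s/2}$. This is a classical elliptic positive self-adjoint pseudodifferential operator of order $s$. The inverse $\Lambda^{-s}$ is compact on $L^{2}(M)$, and its eigenvalues are $(1+\lambda_{j}(-\Delta))^{-s/2}$; since Weyl's law gives $\lambda_{j}(-\Delta)\sim c\,j^{2/n}$, we obtain $s_{j}(\Lambda^{-s})\asymp j^{-s/n}$, so $\Lambda^{-s}\in\Scal^{q}(L^{2}(M))$ for every $q>n/s$. Meanwhile, the hypothesis $K\in H^{0,s}(M\times M)=L^{2}_{x}(M,H^{s}_{y}(M))$ is, by definition, precisely the statement that the kernel
\[
\widetilde K(x,y):=\bigl(\Lambda^{s}_{y}\,K(x,\cdot)\bigr)(y)
\]
belongs to $L^{2}(M\times M)$, so the associated integral operator $\Tcal_{\widetilde K}$ is Hilbert--Schmidt, i.e.\ $\Tcal_{\widetilde K}\in\Scal^{2}(L^{2}(M))$.

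The central algebraic identity is the factorization $\Tcal=\Tcal_{\widetilde K}\circ\Lambda^{-s}$ on $L^{2}(M)$, obtained by moving $\Lambda^{s}$ from the kernel onto the argument by the self-adjointness of $\Lambda^{s}$: for $f\in L^{2}(M)$ and a.e.\ $x\in M$,
\[
\Tcal_{\widetilde K}[\Lambda^{-s}f](x)=\bigl\langle \Lambda^{s}_{y}K(x,\cdot),\,\Lambda^{-s}f\bigr\rangle_{L^{2}_{y}}=\bigl\langle K(x,\cdot),\,f\bigr\rangle_{L^{2}_{y}}=\Tcal[f](x).
\]
H\"older's inequality for Schatten classes, $\Scal^{p_{1}}\cdot\Scal^{p_{2}}\subset\Scal^{p}$ with $1/p=1/p_{1}+1/p_{2}$, applied with $p_{1}=2$ and any $p_{2}>n/s$, then yields $\Tcal\in\Scal^{p}$ for every $p$ satisfying $1/p<1/2+s/n$, i.e.\ $p>2n/(n+2s)$, as claimed.

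The main technical obstacle is justifying the duality pairing above rigorously, since $K(x,\cdot)$ a priori only lies in $H^{s}$ for a.e.\ $x$ and $\Lambda^{s}$ is defined globally via the functional calculus rather than as a pointwise integral. The standard remedy is a two-step approximation: replace $K$ by $K_{\varepsilon}(x,y):=(\chi_{\varepsilon}(-\Delta_{y})K)(x,y)$, where $\chi_{\varepsilon}$ is a smooth spectral cutoff, verify the identity for the smoothed kernel by the self-adjoint functional calculus (where both sides reduce to the same spectral sum), and then pass to the limit using $K_{\varepsilon}\to K$ in $L^{2}_{x}H^{s}_{y}$ combined with the continuity of $\Lambda^{-s}$ on $L^{2}(M)$. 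The only further inputs are the Weyl asymptotics for $-\Delta$ on a closed manifold and the H\"older inequality for Schatten norms, both of which are classical.
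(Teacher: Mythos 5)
The paper does not reprove this result; it is cited directly from \cite{Delgado-Ruzhansky14}, so there is no in-paper proof to compare against. Your argument is correct and is essentially Delgado--Ruzhansky's own proof, modulo presentation: factor $\Tcal=\Tcal_{\widetilde K}\circ\Lambda^{-s}$ with $\Tcal_{\widetilde K}$ Hilbert--Schmidt (by the very definition of the mixed Sobolev norm) and $\Lambda^{-s}\in\Scal^{q}$ for every $q>n/s$ (Weyl's law), then apply the Schatten--H\"older inequality.

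Two small points worth tightening. First, the H\"older step with $p_1=2$, $p_2>n/s$ directly yields only $p\in(2n/(n+2s),2)$; to conclude membership in $\Scal^p$ for \emph{every} $p>2n/(n+2s)$ you should explicitly invoke the monotonicity $\Scal^{p_1}\subset\Scal^{p_2}$ for $p_1\le p_2$. Second, the ``technical obstacle'' you flag at the end is overstated: for a.e.\ $x$ one has $K(x,\cdot)\in H^s=\mathrm{dom}(\Lambda^s)$, and for every $f\in L^2$ one has $\Lambda^{-s}f\in H^s=\mathrm{dom}(\Lambda^s)$, so the identity $\langle K(x,\cdot),f\rangle_{L^2}=\langle\Lambda^{s}K(x,\cdot),\Lambda^{-s}f\rangle_{L^2}$ is an immediate consequence of the self-adjointness of $\Lambda^{s}$ and requires no spectral cutoff or limiting argument. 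Finally, since the theorem allows an arbitrary smooth density $\df\mu$ rather than the Riemannian volume $\df V$, you should remark that conjugating by the multiplication operator $(\df\mu/\df V)^{1/2}$ is a unitary between $L^2(\df\mu)$ and $L^2(\df V)$ that preserves Schatten ideals and the Sobolev scale, so one may assume $\df\mu=\df V$ without loss of generality.
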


Here we make a small remark in order to avoid possible confusion (this remark applies to Theorem \ref{theo_grubb} as well). 
The $C^\infty$-smooth manifold does not mean that it is the boundary of a domain with the smooth boundary. It means that $\varphi_\alpha \circ \varphi_\beta^{-1}$ is smooth, where $\varphi_\alpha, \ \varphi_\beta$ are coordinate charts. The Lipschitz continuous boundary in the Euclidean space can be realized as a smooth manifold. In fact, if $\partial \Sigma$ is Lipschitz, then the there is a domain $D$ such that $\partial D$ is smooth and $\partial D$ is homeomorphic to $\partial \Sigma$ (see \cite{Necas62} or \cite[Theorem 1.12]{Verchota84}). We can define the coordinate charts on $\partial \Sigma$ in terms of the smooth coordinate charts on $\partial D$ by using the homeomorphism. The boundary regularity only affects the property of the integral kernel of the NP operator.

Proposition \ref{theo_rs_sobolev_kernel} and Theorem \ref{theo_DR14} immediately yield the following proposition.

\begin{prop}\label{theo_np_zero_mode_decomposition}
    If $\partial\Sigma$ is Lipschitz, then $\Rcal^*\in \Scal^{2/3+\varepsilon}(L^2 (\partial\Sigma))$ for all $\varepsilon>0$.
\end{prop}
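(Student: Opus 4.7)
The plan is to apply the Delgado--Ruzhansky Schatten-class criterion (Theorem \ref{theo_DR14}) directly to the kernel $R^*$. The key input is already in place: Proposition \ref{theo_rs_sobolev_kernel} gives $R^*(p,p')\in H^{0,1-\eta}(\partial\Sigma\times\partial\Sigma)$ for every $\eta>0$, so the remaining task is to translate this Sobolev regularity of the kernel into a Schatten exponent.

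First, I would check that Theorem \ref{theo_DR14} is applicable. The underlying manifold is the curve $\partial\Sigma$, which is only Lipschitz. However, as the remark following Theorem \ref{theo_DR14} explains, a Lipschitz closed curve can be identified (via a homeomorphism onto the boundary of some smooth domain $D$) with a $C^\infty$ closed one-manifold, and the Sobolev regularity information about $R^*$ transfers through this identification. So the hypothesis of Theorem \ref{theo_DR14} is satisfied with dimension $n=1$ and Sobolev index $s=1-\eta$.

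Next, I would apply Theorem \ref{theo_DR14} with these parameters: it yields $\Rcal^*\in\Scal^p(L^2(\partial\Sigma))$ for every
\[
    p>\frac{2n}{n+2s}=\frac{2}{3-2\eta}.
\]
Given $\varepsilon>0$, I would choose $\eta>0$ small enough that $\tfrac{2}{3-2\eta}<\tfrac{2}{3}+\varepsilon$; this is possible since $\tfrac{2}{3-2\eta}\to \tfrac{2}{3}$ as $\eta\downarrow 0$. Taking $p=\tfrac{2}{3}+\varepsilon$ then gives the desired conclusion $\Rcal^*\in\Scal^{2/3+\varepsilon}(L^2(\partial\Sigma))$.

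I do not expect a real obstacle at this step, because the heavy work---controlling the kernel in the mixed Sobolev space $H^{0,1-\eta}$ and setting up the abstract Schatten criterion---is already done. The only point requiring a little care is the book-keeping between $\eta$ (the Sobolev index of $R^*$) and $\varepsilon$ (the Schatten exponent), together with the interpretation of $\partial\Sigma$ as a smooth one-manifold so that Theorem \ref{theo_DR14} can be invoked in the Lipschitz setting.
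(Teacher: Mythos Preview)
Your proposal is correct and follows exactly the paper's approach: the paper states that Proposition \ref{theo_rs_sobolev_kernel} and Theorem \ref{theo_DR14} immediately yield the result, and you have spelled out precisely this implication, including the remark on realizing the Lipschitz curve as a smooth one-manifold and the bookkeeping between the Sobolev index and the Schatten exponent.
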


We recall the following result on the Schatten class to which the NP operator on the two-dimensional domain belongs depending on the regularity of the boundary.

\begin{prop}[{\cite{FKMdr}}]
    \label{theo_nps_Sobolev}
    If $\Sigma$ is a bounded domain in $\Rbb^{2}$ with $C^{k, \alpha}$ boundary $\partial\Sigma$ for some positive integer $k$ and $\alpha \in (0,1]$, then $\NP[\partial\Sigma]^*\in \Scal^{p} (L^2 (\partial\Sigma))$ for all $p > 1/(k+\alpha-1)$.
\end{prop}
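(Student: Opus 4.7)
The plan is to deduce the Schatten-class membership from Sobolev-type regularity of the integral kernel of $\NP[\partial\Sigma]^*$ and then invoke Theorem \ref{theo_DR14}. Parametrize $\partial\Sigma$ by arc length and write $K(s,t):=K^*_{\partial\Sigma}(p(s),p(t))$. The target is to show that, for every $\varepsilon>0$, $K\in H^{0,s_0}(\partial\Sigma\times\partial\Sigma)$ with $s_0=k+\alpha-\tfrac{3}{2}-\varepsilon$. Once this is in place, Theorem \ref{theo_DR14} with $n=1$ yields $\NP[\partial\Sigma]^*\in\Scal^p(L^2(\partial\Sigma))$ for every $p>2/(1+2s_0)=1/(k+\alpha-1-\varepsilon)$, and letting $\varepsilon\to 0^+$ gives the claimed range $p>1/(k+\alpha-1)$.

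For the kernel analysis, I would work in local graph coordinates centered at $p(s)$: choose an orthonormal frame with $p(s)$ at the origin and $\widetilde\nu_{p(s)}$ vertical, so that $\partial\Sigma$ is locally the graph $y=\phi(x)$ with $\phi(0)=\phi'(0)=0$ and $\phi\in C^{k,\alpha}$. Then
\[
(p(t)-p(s))\cdot\widetilde\nu_{p(s)}=\phi(x(t)), \qquad |p(t)-p(s)|^2=x(t)^2+\phi(x(t))^2,
\]
so the vanishing of $\phi$ at the origin to an order that matches the $C^{k,\alpha}$ regularity yields, together with the analogous expansion at $p(t')$ in place of $p(t)$, a refined pointwise estimate
\[
|K(s,t)-K(s,t')|\lesssim |t-t'|^{k+\alpha-1-\varepsilon},
\]
possibly with a logarithmic or bounded factor depending on the distance from $(s,t)$, $(s,t')$ to the diagonal. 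Inserting this into the characterization \eqref{eq_sobolev_double} of the $H^{0,s}$-norm and integrating in $s$ then yields the required kernel regularity uniformly in the first variable.

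The main obstacle is converting the pointwise Taylor/Hölder information on the local graph function $\phi$ into the double-integral Sobolev bound \eqref{eq_sobolev_double} uniformly in $s$, since both the numerator and the denominator of $K(s,t)$ degenerate near the diagonal and their ratio must be controlled in a fractional-Sobolev norm. A separate delicate regime is $k=1$ with small $\alpha$, where $s_0\le 0$ and $K$ can even fail to lie in $L^2(\partial\Sigma\times\partial\Sigma)$; there one has to supplement the above with an auxiliary device such as a dyadic decomposition of $K$ near the diagonal coupled with real interpolation of Schatten ideals, or a smoothing/commutator argument that exchanges boundary regularity for Schatten-class gain. Handling all pairs $(k,\alpha)$ with $k\ge 1$ and $\alpha\in(0,1]$ by a single unified argument is the technically delicate part of the proof.
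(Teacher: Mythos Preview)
The paper does not give its own proof of this proposition; it is quoted verbatim from \cite{FKMdr}, so there is nothing in the paper to compare your argument against.

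That said, your outline has a concrete gap. The pointwise difference estimate
\[
|K(s,t)-K(s,t')|\lesssim |t-t'|^{k+\alpha-1-\varepsilon}
\]
cannot hold as written once $k\ge 2$: a H\"older exponent strictly larger than $1$ forces the function to be constant in $t$, which $K(s,\cdot)$ certainly is not. Relatedly, the Gagliardo-type characterization \eqref{eq_sobolev_double} that you plan to feed into Theorem~\ref{theo_DR14} only describes Sobolev orders $s_0\in(0,1)$; for $s_0\ge 1$ (i.e.\ $k+\alpha>5/2$) one must take at least $\lfloor s_0\rfloor$ derivatives in the second variable and control the fractional seminorm of those derivatives, which in turn requires differentiating the kernel and tracking how the derivatives interact with the diagonal singularity. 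So even if the local-graph Taylor computation is carried out carefully, your scheme as stated covers only the narrow window $3/2<k+\alpha<5/2$; the cases $k\ge 3$ (or $k=2$ with $\alpha\ge 1/2$), and---as you yourself flag---$k=1$ with $\alpha\le 1/2$, remain unhandled.

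A unified proof for all $(k,\alpha)$ needs a different mechanism: either show directly that in an arc-length parametrization the kernel $K$ extends to a function whose $\partial_t^{k-1}$-derivative is $C^{0,\alpha}$ (with the diagonal contribution identified explicitly), and then apply Theorem~\ref{theo_DR14} to the remainder after subtracting a finite-rank or explicitly controllable piece; or, more in the spirit of \cite{FKMdr}, establish the mapping property $\NP[\partial\Sigma]^*:H^{s}(\partial\Sigma)\to H^{s+k+\alpha-1-\varepsilon}(\partial\Sigma)$ and deduce the Schatten bound by comparison with the singular values of $(1-\Delta_{\partial\Sigma})^{-(k+\alpha-1-\varepsilon)/2}$.
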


\subsection{Pseudodifferential operators}\label{subs_psiDO}
We now look into the decay rate of the singular values of the operator $(v_p/2y)\SL[\partial\Sigma]$. To do so, we invoke a result of Grubb \cite{Grubb14}. We first prepare ourselves with basic notion of pseudodifferential operators (abbreviated to $\Psi$DOs). In what follows, we use the notation in \cite{Grubb14} except for the space $C^\tau S^m (\Rbb^d\times (\Rbb^d\setminus \{0\}))$, which will be defined later. We further refer to \cite{Hormander85-3, Kumano-go81, Zworski12} for basics of $\Psi$DOs. 

We denote the Schwartz class (of all rapidly decreasing functions) on $\Rbb^d$ by $\Sscr (\Rbb^d)$ and the space of all tempered distributions on $\Rbb^d$ by $\Sscr^\prime (\Rbb^d)$ (continuous dual of $\Sscr (\Rbb^d)$). For $a\in \Sscr^\prime (\Rbb^{2d})$ and $u\in \Sscr (\Rbb^d)$, we define 
\[
    a(x, D)u(x):=\frac{1}{(2\pi)^d}\int_{\Rbb^d}a(x, \xi) \e^{\iu \xi \cdot x}\widehat{u}(\xi)\, \df \xi \in \Sscr^\prime (\Rbb^d)
\]
and call $a(x, D)$ the $\Psi$DO with the symbol $a(x, \xi)$. Here 
\[
    \widehat{u}(\xi):=\int_{\Rbb^d} u(x)\e^{-\iu \xi \cdot x}\, \df x \in \Sscr (\Rbb^d)
\]
is the Fourier transform of $u(x)\in \Sscr (\Rbb^d)$. We say that a tempered distribution $a(x, \xi)$ on $\Rbb^{2d}$ belongs to $C^\tau  S^m_{1, 0} (\Rbb^{2d})$ if $a$ belongs to $C^\infty_\xi (\Rbb^d, C^\tau _x(\Rbb^d))$ and 
\[
    \| \partial_{\xi}^\beta a(\cdot, \xi)\|_{C^\tau  (\Rbb^d)}=O((1+|\xi|)^{m-|\beta|}) \quad (|\xi|\to \infty)
\]
for any $\beta\in (\Nbb\cup \{0\})^d$ where we employed the standard multiindex notation 
\[
    \partial_\xi^\beta :=\partial_{\xi_1}^{\beta_1}\cdots \partial_{\xi_d}^{\beta_d}, \ 
    |\beta|:=\beta_1+\cdots+\beta_d.
\]
We say that $a\in \Sscr^\prime (\Rbb^{2d})$ belongs to $C^\tau  S^m (\Rbb^{2d})$ ($\tau>0$, $m\in \Rbb$) if there exists a family of functions $\{a_{m-j}: \Rbb^{2d}\to \Cbb\}_{j=0}^\infty$ such that $a_{m-j}\in C^\tau  S^{m-j}_{1, 0} (\Rbb^{2d})$ is homogeneous of degree $m-j$ for $\xi$ with $|\xi|\geq 1$ and $a-\sum_{j=0}^J a_{m-j}\in C^\tau  S^{m-J-1}_{1, 0} (\Rbb^{2d})$ for all $J\in \Nbb$. We say that $a\in \Sscr^\prime (\Rbb^{2d})$ belongs to $C^\tau  S^m (\Rbb^d\times (\Rbb^d\setminus \{0\}))$ if $(1-\chi (\xi))a(x, \xi)\in C^\tau  S^m (\Rbb^{2d})$ for any function $\chi \in C_c^\infty (\Rbb^d)$ with $\chi(\xi)=1$ near $\xi=0$. 

Next, we consider $\Psi$DOs on a compact $d$-dimensional manifold $M$. We denote its cotangent bundle by $T^*M$. Take a finite covering $\{U_j\}_{j=1}^N$ of $M$ such that, for any pair $(j, k)$ with $j\neq k$ and $U_j\cap U_k\neq \varnothing$, there exists a coordinate neighborhood $U_{jk}\subset M$ such that $U_j \cap U_k\subset U_{jk}$. Let $\varphi_{jk}: U_{jk}\to V_{jk}$ ($V_{jk}$ is an open subset of $\Rbb^d$) be the coordinate chart and $\widetilde{\varphi}_{jk}: T^* U_{jk}\to V_{jk}\times \Rbb^d$ be the associated canonical coordinates, namely, 
\begin{equation}\label{eq_canonical}
    \widetilde{\varphi}_{jk}\left(x, \sum_{l=1}^d \xi_l\, \df x_l|_x\right):= (\varphi_{jk}(x), \xi_1, \ldots, \xi_d),
\end{equation}
where $\df x_1|_x, \ldots, \df x_d|_x$ be the local basis of $T^*_x U_{jk}$ associated with the coordinate chart $\varphi_{jk}$. 
Fix a partition of unity $\{\kappa_j\}_{j=1}^N$ subordinate to the open covering $\{U_j\}_{j=1}^N$. We say that $a\in \Dscr^\prime (T^*M)$, where $\Dscr^\prime (T^*M)$ is the space of all distributions on $T^*M$ (see \cite{Hormander85-1} for example), is locally in $C^\tau  S^m (\Rbb^d \times (\Rbb^d\setminus \{0\}))$ if we can take an atlas described above such that $\chi (x)\widetilde{\varphi}_{jk*}a(x, \xi)\in C^\tau  S^m (\Rbb^d\times (\Rbb^d\setminus \{0\}))$ for any function $\chi \in C_c^\infty (V_{jk})$ where $\widetilde{\varphi}_{jk*}a(x, \xi):=a(\widetilde{\varphi}_{jk}^{-1}(x, \xi))$ ($(x, \xi)\in V_{jk}\times \Rbb^d$). Then, for a function $a\in \Dscr^\prime (T^*M)$ which is locally in $C^\tau  S^m (\Rbb^d\times (\Rbb^d\setminus \{0\}))$, we define
\begin{equation}\label{eq_psido}
    \Op (a)u(x):=\sum_{\substack{j, k=1 \\ U_j\cap U_k \neq\varnothing}}^N \kappa_j (x)(\widetilde{\varphi}_{jk*}a)(x, D)[\kappa_k (u\circ \varphi_{jk})](x)
\end{equation}
for $u\in C^\infty (M)$. We remark that each summand makes sense since $\kappa_j \tilde{\varphi}_{jk*}a\in C^\tau  S^m (\Rbb^d\times (\Rbb^d\setminus \{0\}))$. Finally, for $\Acal=\Op (a)$ with the symbol $a$ which is locally in $C^\tau  S^m (\Rbb^d\times (\Rbb^d\setminus \{0\}))$, we define the principal symbol $a^\mathrm{pr}\in \Dscr^\prime (T^*M)$ of $\Acal$, which is also locally in $C^\tau S^m (\Rbb^d\times (\Rbb^d\setminus \{0\}))$, by the following procedure: for $(x_0, \xi_0)\in T^*M$ with $\xi\neq 0$, we find a coordinate neighborhood $U_{jk}$ containing $x$. Then take a cutoff functions $\chi_1 (x)\in C_c^\infty (V_{jk})$ and $\chi_2 (\xi)\in C_c^\infty (\Rbb^d)$ such that $\chi_1 (x)=1$ near $x_0$, $\chi_2(\xi)=1$ near $\xi=0$ and $\chi_2 (\xi)=0$ near $\xi=\xi_0$. Then we can take $a_{jk, m}\in C^\tau  S^m (\Rbb^{2d})$ such that $\chi_1 (x)(1-\chi_2 (\xi))\widetilde{\varphi}_{jk*}a (x, \xi)-a_{jk, m}(x, \xi)\in C^\tau  S^{m-1}_{1, 0}(\Rbb^{2d})$. Then we define $a^\mathrm{pr}(x_0, \xi_0):=a_{jk, m}(\widetilde{\varphi}_{jk}^{-1}(x_0, \xi_0))$. This definition of the principal symbol is actually independent of the choice of the coordinate chart $\varphi_{jk}: U_{jk}\to V_{jk}$ and the cutoff functions $\chi_1(x)$ and $\chi_2(\xi)$. We also denote the principal symbol of $\Acal$ by $\sigma^\mathrm{pr}(\Acal)$ when there is no possibility of confusion. 

Now we state the following theorem which is the one-dimensional case of the result in \cite{Grubb14}.

\begin{theo}[{\cite[Theorem 2.5]{Grubb14}}]\label{theo_grubb}
    If $\Acal=\Op (a)$ is a $\Psi$DO of order $-1$ with the symbol $a(x, \xi)$ and the principal symbol $a^\mathrm{pr}(x, \xi)$ which are locally in $C^\tau  S^{-1}(\Rbb\times (\Rbb\setminus \{0\}))$ for some $\tau>0$ on a $C^\infty$-smooth one-dimensional compact manifold $M$ and $g$ be a $C^{0, \alpha}$-Riemannian metric on $M$ for some $\alpha>0$, then the singular values of $\Acal$ on $L^2 (M, \df \sigma)$ where $\df \sigma$ is the measure associated with the Riemannian metric $g$ has the asymptotics
    \[
        s_j (\Acal)\sim C_0j^{-1}, \quad
        C_0=\frac{1}{2\pi}\int_M (|a^\mathrm{pr}(x, \xi (x))|+|a^\mathrm{pr}(x, -\xi (x))|)\, \df \sigma (x),
    \]
    where $\xi: M\to T^*M$ is a continuous global section such that $|\xi(x)|=1$ for all $x\in M$ with respect to the metric $g$.
\end{theo}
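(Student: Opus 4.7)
The plan is to reduce the singular value asymptotic to a Weyl-type eigenvalue asymptotic for the non-negative self-adjoint $\Psi$DO $\Acal^*\Acal$ of order $-2$, compute the phase-space volume explicitly using one-dimensionality together with the homogeneity of the principal symbol, and finally handle the low regularity of metric and symbol by approximation.

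First, since $s_j(\Acal) = \mu_j(\Acal^*\Acal)^{1/2}$, where $\{\mu_j\}$ denote the eigenvalues of $\Acal^*\Acal$ in descending order, it suffices to establish $\mu_j \sim C_0^2 j^{-2}$ as $j\to\infty$. Now $\Acal^*\Acal$ is a non-negative self-adjoint $\Psi$DO of order $-2$ on $M$ whose principal symbol is $|a^\mathrm{pr}(x,\xi)|^2$, homogeneous of degree $-2$ in $\xi$. The classical Weyl asymptotic for $\Psi$DOs of negative order (valid under sufficient regularity) then gives
\[
N(\lambda) := \# \{ j : \mu_j \geq \lambda\} \sim \frac{1}{2\pi} \vol \{(x, \xi) \in T^* M\setminus 0 : |a^\mathrm{pr}(x, \xi)|^2 \geq \lambda\}
\]
as $\lambda \to 0^+$, where the volume is the canonical symplectic volume $\df x\wedge \df \xi$ on $T^* M$.

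The phase-space volume is then computed by hand. Fix a continuous unit section $\xi = \xi(x)$ of $T^*M\setminus 0$ with respect to $g$ and parametrize an arbitrary covector over $x$ as $t\xi(x)$ with $t\in \Rbb$. In dimension one a direct computation shows $\df x\wedge \df \xi = \df \sigma (x)\wedge \df t$, where $\df \sigma$ is the $g$-line element on $M$. Homogeneity of degree $-1$ of $a^\mathrm{pr}$ gives $|a^\mathrm{pr}(x, t\xi(x))|^2 = t^{-2}|a^\mathrm{pr}(x, \mathrm{sgn}(t)\xi(x))|^2$, so the slice over $x$ of $\{|a^\mathrm{pr}|^2 \geq \lambda\}$ is an interval in $t$ of total length $(|a^\mathrm{pr}(x, \xi(x))| + |a^\mathrm{pr}(x, -\xi(x))|)/\sqrt{\lambda}$. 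Integrating in $x$ and substituting in the Weyl asymptotic yields $N(\lambda) \sim C_0/\sqrt{\lambda}$ with exactly the constant stated in the theorem, and inverting $j \sim N(\mu_j)$ gives $\mu_j \sim C_0^2 j^{-2}$, so that $s_j(\Acal) \sim C_0 j^{-1}$.

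The hard part will be the low regularity of the data: the symbol is only in $C^\tau S^{-1}(\Rbb \times (\Rbb \setminus \{0\}))$ in $x$ and the Riemannian metric is only $C^{0, \alpha}$, whereas the Weyl asymptotic invoked in the previous paragraph is classically proved under $C^\infty$ hypotheses. My plan is to mollify $a(x, \xi)$ in $x$ to produce a smooth symbol $a_\Gve$ whose principal symbol approximates $a^\mathrm{pr}$ uniformly on the unit cosphere bundle, apply the previous paragraph to $\Op(a_\Gve)$, and then treat $\Op(a-a_\Gve)$ as a small remainder. The key technical estimate is to show that, by suitable choice of $\Gve$ relative to $\tau$ and $\alpha$, the remainder lies in a Schatten class $\Scal^q(L^2(M, \df \sigma))$ with $q<1$, so its singular values satisfy $s_j = o(j^{-1})$ and therefore do not affect the leading asymptotic. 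A Ky Fan / min-max inequality for singular values then transfers the asymptotic from $\Op(a_\Gve)$ to $\Acal$ with the same constant $C_0$. An analogous mollification is needed for the $C^{0, \alpha}$ metric, with the error at the level of the volume form being absorbed into the same remainder class. Obtaining the Schatten membership of the low-regularity remainder is the technical heart of the argument.
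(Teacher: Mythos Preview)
The paper does not prove this theorem; it is quoted verbatim from \cite[Theorem 2.5]{Grubb14} and used as a black box in the proof of Theorem~\ref{theo_eigenvalue_C1a_C2a}. There is therefore no proof in the paper to compare your proposal against.

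As for the proposal itself, the overall strategy---reduce to a Weyl law for $\Acal^*\Acal$, compute the phase-space volume by hand in dimension one, and handle low regularity by approximation---is standard and broadly correct, but the mechanism you give for the approximation step contains a genuine gap. You assert that the remainder $\Op(a-a_\varepsilon)$ lies in a Schatten class $\Scal^q$ with $q<1$, hence has singular values $o(j^{-1})$. This is not what mollification in $x$ produces: the difference $a-a_\varepsilon$ is still a symbol of order $-1$, merely with small sup-norm on the unit cosphere bundle, so the associated operator has singular values of size $\sim c_\varepsilon\, j^{-1}$ with $c_\varepsilon\to 0$, \emph{not} $o(j^{-1})$ for any fixed $\varepsilon$. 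The argument that actually works (and is the one in Birman--Solomjak and in Grubb's paper) is a two-sided squeeze: one proves a uniform bound $\limsup_j j\, s_j(\Op(a-a_\varepsilon))\le C\|a^{\mathrm{pr}}-a_\varepsilon^{\mathrm{pr}}\|_{L^\infty}$, applies the smooth Weyl law to $\Op(a_\varepsilon)$ with constant $C_0(\varepsilon)\to C_0$, invokes the Ky Fan inequalities to trap $\liminf$ and $\limsup$ of $j\, s_j(\Acal)$ between $C_0(\varepsilon)\pm C\delta_\varepsilon$, and then lets $\varepsilon\to 0$. The same remark applies to smoothing the $C^{0,\alpha}$ metric.

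A secondary issue: identifying $\Acal^*\Acal$ as a $\Psi$DO with principal symbol $|a^{\mathrm{pr}}|^2$ uses the adjoint and composition rules of the calculus, which in their usual form require smooth symbols. With only $C^\tau$ regularity in $x$ this step also needs care---either via a rough-symbol calculus or, more simply, by running the entire argument through the smooth approximants $a_\varepsilon$ from the outset and never forming $\Acal^*\Acal$ for the rough operator directly.
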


If $\partial \Sigma$ is $C^\infty$, it is well-known that $\SL$ itself is a $\Psi$DO and its principal symbol is given by $-1/2|\xi|$ (see \cite{AKSV99} for example). Thus we can directly apply Theorem \ref{theo_grubb} to the operator $(v_p/2y)\SL[\partial\Sigma]$. However, when the boundary is merely $C^{1, \alpha}$ for some $\alpha\in (0, 1)$ as in this section, the operator $\SL[\partial\Sigma]$ may not be a $\Psi$DO. In this case, we decompose the operator $(v_p/2y)\SL[\partial\Sigma]$ into the sum of a $\Psi$DO and a Schatten class operator as in the following proposition: 

\begin{prop}\label{prop_sl_decomposition}
    If $\partial\Sigma$ is $C^{1, \alpha}$ for some $\alpha\in (0, 1)$, then the operator $(v_p/2y)\SL[\partial\Sigma]$ is decomposed as 
    \begin{equation}
        \label{eq_sl_decomposition}
        \frac{v_p}{2y}\SL[\partial\Sigma]=\Op \left(-\frac{v_p}{4y|\xi|}\right)+\Lcal,
    \end{equation}
    where $\Lcal\in \Scal^{2/(1+2\alpha)+\varepsilon}(L^2 (\partial \Sigma))$ for all $\varepsilon>0$. 
\end{prop}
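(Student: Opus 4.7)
The plan is to localize $(v_p/2y)\SL[\partial\Sigma]$ via a finite atlas of $C^{1,\alpha}$-parametrizations, peel off the leading logarithmic singularity whose Fourier multiplier is exactly $-1/(2|\xi|)$ on $\Rbb$, and treat what remains as an operator whose kernel has enough H\"older regularity in the second variable that Theorem \ref{theo_DR14} places it in the claimed Schatten class.

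Choose a finite atlas $\{\varphi_j : U_j\to V_j\subset \Rbb\}$ of $\partial\Sigma$ with arc-length-like parametrizations $p_j(s)=\varphi_j^{-1}(s)\in C^{1,\alpha}$ and a subordinate partition of unity $\{\kappa_j\}$. On a diagonal chart the integral kernel of $(v_p/2y)\SL[\partial\Sigma]$ splits as
\[
    \frac{v_{p_j(s)}}{4\pi y_j(s)}\log|s-s'| \;+\; \frac{v_{p_j(s)}}{4\pi y_j(s)}\log\frac{|p_j(s)-p_j(s')|}{|s-s'|}.
\]
Because $p_j\in C^{1,\alpha}$, the quotient $|p_j(s)-p_j(s')|/|s-s'|$ extends continuously to a $C^\alpha$-function on $V_j\times V_j$ that is bounded away from zero, and so does its logarithm; combined with the $C^\alpha$-factor $v_{p_j(s)}/(4\pi y_j(s))$, the second summand produces a kernel $L_j(s,s')$ that is $C^\alpha$ in $(s,s')$.

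The first summand is identified with a $\Psi$DO of principal symbol $-v_p/(4y|\xi|)$ by using that the distributional Fourier transform of $\log|s|$ on $\Rbb$ equals $-\pi/|\xi|$ modulo a multiple of $\delta_0$. Hence the convolution by $(2\pi)^{-1}\log|s-s'|$ coincides with $\Op(-1/(2|\xi|))$ modulo a smoothing operator, and left multiplication by the $C^\alpha$-coefficient $v_p/(2y)$ realizes $\Op(-v_p/(4y|\xi|))$ on each chart; summing over the atlas reassembles $\Op(-v_p/(4y|\xi|))$ on $\partial\Sigma$ in the sense of \eqref{eq_psido} up to the off-diagonal cross-chart pieces, which are smoothing since $\log|p-p'|$ is $C^\infty$ away from the diagonal.

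For the Schatten estimate, $\Lcal$ is then the sum of operators with kernels of the form $\chi_j(s)\chi_j(s')L_j(s,s')$, each $C^\alpha$ in the second variable uniformly in the first, plus smoothing contributions. The Sobolev characterization \eqref{eq_sobolev_characterization} shows that any such kernel belongs to $H^{0,\alpha-\ve}(\partial\Sigma\times \partial\Sigma)$ for every $\ve>0$; Theorem \ref{theo_DR14} with $n=1$ and $s=\alpha-\ve$ then yields $\Lcal\in \Scal^p(L^2(\partial\Sigma))$ for every $p>2/(1+2(\alpha-\ve))$, which gives $\Lcal\in \Scal^{2/(1+2\alpha)+\ve}(L^2(\partial\Sigma))$ for all $\ve>0$. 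The main obstacle will be the careful bookkeeping in the $C^\tau$-symbol framework of Section \ref{subs_psiDO}: commutators of the $C^\alpha$-multiplier $v_p/(2y)$ with $\Op(-1/(2|\xi|))$ and the chart-transition corrections do not produce smoothing operators but operators of order $-1-\alpha$, and one must verify that their kernels are still $C^\alpha$ in the second variable, so that the same Delgado--Ruzhansky estimate absorbs them into $\Lcal$.
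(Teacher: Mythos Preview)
Your proposal is correct and follows the same overall strategy as the paper: localize, peel off the $\log|s-s'|$ singularity via the distributional Fourier transform $\widehat{\log|\cdot|}(\tau)=-\pi/|\tau|-2\pi\gamma\delta_0$, and then apply Theorem \ref{theo_DR14} to the remainder kernel.

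The one genuine difference is your choice of arc-length parametrizations. The paper works with general $C^{1,\alpha}$ charts, so the local kernel carries the speed factor $|\Phi(s)|=|(\varphi^{-1})'(s)|$ and the remainder $L(t,s)$ in \eqref{eq_tildeB} contains the term $L_1(t,s)=(|\Phi(t)|-|\Phi(s)|)\log|t-s|$. This term is \emph{not} $C^\alpha$ (it behaves like $|t-s|^\alpha\log|t-s|$), and the paper spends most of the proof on a dyadic splitting of the triple integral \eqref{eq_L_Sobolev} over $\{2|s-s'|<|t-s|\}$ and its complement to show $L_1\in H^{0,\alpha-\varepsilon}$. By taking arc-length charts you force $|\Phi|\equiv 1$, so $L_1\equiv 0$ and only the term $\log\bigl(|p_j(s)-p_j(s')|/|s-s'|\bigr)$ survives; since $(p_j(s)-p_j(s'))/(s-s')=\int_0^1 p_j'(s'+\theta(s-s'))\,\df\theta$ is a $C^\alpha$ vector field bounded away from $0$, your remainder is genuinely $C^\alpha$ and the Sobolev estimate is immediate. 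This is a legitimate and cleaner route; the price is only that one should note arc-length charts form a smooth atlas (their transition maps are affine isometries of $\Rbb$), so the manifold definition \eqref{eq_psido} of $\Op$ still applies.

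Your closing worry about commutators of the $C^\alpha$ multiplier $v_p/(2y)$ with $\Op(-1/(2|\xi|))$ is unnecessary: in the left quantization used in \eqref{eq_psido} one has $a(x)\,\Op(b(\xi))=\Op(a(x)b(\xi))$ exactly, so no order-$(-1-\alpha)$ correction appears. The cross-chart and $\delta_0$ contributions are smoothing (kernels at least $C^{1,\alpha}$, resp.\ rank one), hence already in the required Schatten class.
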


\begin{proof}
    We begin by recalling a well-known formula \cite[p.132]{Vladimirov71}
    \begin{equation}\label{eq_log_Fourier}
        \widehat{\log |t|} (\tau)=-\frac{\pi}{|\tau|}-2\pi \gamma \delta(\tau), 
    \end{equation}
    where $\gamma$ is the Euler-Mascheroni constant and $\delta (\tau)$ is the delta function. Let $U$ be a coordinate neighborhood in $\partial\Sigma$ and $\varphi: U\subset \partial \Sigma \to V\subset \Rbb$ be a coordinate chart whose inverse $\varphi^{-1}: V\to U$ is $C^{1, \alpha}$-regular as a function from $V\subset \Rbb$ into $\Rbb^2$. Let $\kappa_1, \kappa_2: \partial\Sigma \to [0, \infty)$ be cutoff functions supported in $U$ which can be extended as compactly supported $C^\infty$ functions in $\Rbb^2$. Then a direct calculation by the Taylor theorem and the formula \eqref{eq_log_Fourier} proves 
    \begin{equation}
        \label{eq_sl_local}
        \kappa_1 \SL[\partial \Sigma][\kappa_2 f](p)=-\frac{\sqrt{|\Phi (t)|}}{2}\kappa_1 (p)\varphi^*\left(\frac{1}{|D|}[\varphi_*(\kappa_2 f)]\right)(p)+\widetilde{\Lcal}[f](p)
    \end{equation}
    for $f\in C^1 (\partial\Sigma)$, where $\Phi (t):=\df \varphi^{-1}(t)/\df t\in \Rbb^2$, 
    \[
        \left(\frac{1}{|D|}\right) [u](t):=(2\pi)^{-1}\int_\Rbb |\tau|^{-1}\e^{\iu \tau t}\widehat{u}(\tau)\, \df \tau
    \]
    is the $\Psi$DO with the symbol $|\tau|^{-1}\in \Sscr^\prime (\Rbb)$, and the operator $\widetilde{\Lcal}$ is defined by 
    \[
        \widetilde{\Lcal}[f](p):=\frac{\kappa_1 (p)}{2\pi}\int_{\partial\Sigma} L (\varphi (p), \varphi (q)) |\varphi^*\Phi (q)|^{-1}\kappa_2 (q) f(q)\, \df \sigma (q)
    \]
    where  
    \begin{equation}\label{eq_tildeB}
        L (t, s):=(|\Phi (s)|-|\Phi (t)|)\log |s-t|+|\Phi (s)|\log |\Phi (t)+\Xi (t, s)|-2\pi \gamma 
    \end{equation}
    and 
    \[
        \Xi (t, s):=\int_0^1 (\Phi (t+\theta (s-t))-\Phi (t))\, \df \theta \in \Rbb^2. 
    \]
    
    It suffices to prove $\widetilde{\Lcal}\in \Scal^{2/(1+2\alpha)+\varepsilon} (L^2 (\partial\Sigma))$ for all $\varepsilon>0$. In fact, the desired decomposition \eqref{eq_sl_decomposition} and the operator $\Lcal$ there are obtained by taking a finite atlas and a partition of unity as we described in the beginning of this subsection and summing up \eqref{eq_sl_local} over the index of the atlas following the definition \eqref{eq_psido} of the $\Psi$DO. This delocalization procedure and the multiplication by $v_p/2y\in L^\infty (\partial\Sigma)$ does not change the Schatten class property of $\widetilde{\Lcal}$.

    In order to prove $\widetilde{\Lcal}\in \Scal^{2/(1+2\alpha)+\varepsilon}(L^2 (\partial\Sigma))$ for any $\varepsilon>0$, it suffices to prove 
    \[
        L (\varphi (p), \varphi (q))|\varphi^*\Phi (q)|^{-1}\kappa_1 (p)\kappa_2 (q) \in H^{0, \alpha-\varepsilon} (\partial\Sigma \times \partial \Sigma)
    \]
    for all $\varepsilon>0$ by virtue of Theorem \ref{theo_DR14}. It is easy to prove that the second and third terms in the right hand side of \eqref{eq_tildeB} belong to the Sobolev space $H^{0, \alpha-\varepsilon}(\partial\Sigma\times \partial\Sigma)$ for all $\varepsilon>0$ by directly checking the condition \eqref{eq_sobolev_double} using the $C^{1, \alpha}$-regularity of $\varphi^{-1}: V\to U\subset \partial\Sigma\subset \Rbb^2$. Thus it suffices to prove 
    \begin{equation}
        \label{eq_L_Sobolev}
        \int_{I^3} \frac{\left|L_1 (t, s)-L_1 (t, s^\prime)\right|^2}{|s-s^\prime|^{1+2(\alpha-\varepsilon)}}\, \df t \df s \df s^\prime <\infty   
    \end{equation}
    for any $\varepsilon>0$, where 
    \begin{align*}
        &L_1 (t, s):=(|\Phi (t)|-|\Phi (s)|)\log |t-s|, \\
        &I:=\rmop{supp}\varphi_*\kappa_1 \cup \rmop{supp}\varphi_*\kappa_2\subset \Rbb. 
    \end{align*}
        
    We introduce a subset 
    \[
        S:=\{ (t, s, s^\prime)\in I^3 \mid 2|s-s^\prime|<|t-s|\}
    \]
    and decompose the integral \eqref{eq_L_Sobolev} into those over $S$ and over $I^3\setminus S$. 

    Since $|t-s^\prime|\geq |t-s|/2$ on $S$, one can prove the inequality 
    \[
        \left|\log |t-s|-\log |t-s^\prime|\right| \leq \frac{2|s-s^\prime|}{|t-s|}
    \]
    on $S\setminus \{ t=s\}$ by the mean value theorem. Thus, since $|t-s^\prime|\leq 3|t-s|$ on $S$, we have the estimate 
    \begin{align*}
        &\int_S \frac{\left|L_1 (t, s)-L_1 (t, s^\prime)\right|^2}{|s-s^\prime|^{1+2(\alpha-\varepsilon)}}\, \df t \df s \df s^\prime \\
        &\lesssim \int_S \biggl(\frac{|\Phi (s)-\Phi (s^\prime)|^2 |\log |t-s||^2}{|s-s^\prime|^{1+2(\alpha-\varepsilon)}} \\
        &\quad +\frac{|\Phi (t)-\Phi (s^\prime)|^2 |\log |t-s|-\log |t-s^\prime||^2}{|s-s^\prime|^{1+2(\alpha-\varepsilon)}}\biggr)\, \df t \df s \df s^\prime \\ 
        &\lesssim \int_S \left(\frac{|\log |t-s||^2}{|s-s^\prime|^{1-2\varepsilon}}+\frac{1}{|s-s^\prime|^{2(\alpha-\varepsilon)-1}|t-s|^{2-2\alpha}}\right)\, \df t \df s \df s^\prime \\
        &\lesssim \int_{I^2} (|t-s|^{2\varepsilon}|\log |t-s||^2+|t-s|^{2\varepsilon})\, \df t \df s<\infty.
    \end{align*} 
    
    On the other hand, since $|L_1 (t, s)|\lesssim |t-s|^\alpha |\log |t-s||$ and $|t-s^\prime|\leq 3|s-s^\prime|$ on $I^3\setminus S$, we obtain the estimate 
    \begin{align*}
        &\int_{I^3\setminus S} \frac{\left|L_1 (t, s)-L_1 (t, s^\prime)\right|^2}{|s-s^\prime|^{1+2(\alpha-\varepsilon)}}\, \df t \df s \df s^\prime \\
        &\lesssim \int_{I^3\setminus S} \frac{|t-s|^{2\alpha}|\log |t-s||^2}{|s-s^\prime|^{1+2(\alpha-\varepsilon)}}\, \df t \df s \df s^\prime \\
        &\quad +\int_{I^3, |t-s^\prime|\leq 3|s-s^\prime|}\frac{|t-s^\prime|^{2\alpha}|\log |t-s^\prime||^2}{|s-s^\prime|^{1+2(\alpha-\varepsilon)}}\, \df t \df s \df s^\prime \\ 
        &\lesssim \int_{I^2} |t-s|^{2\varepsilon}|\log |t-s||^2\, \df t \df s
        +\int_{I^2}|t-s^\prime|^{2\varepsilon}|\log |t-s^\prime||^2\, \df t \df s^\prime \\
        &<\infty.
    \end{align*}
    This completes the proof of \eqref{eq_L_Sobolev}. 
\end{proof}

\subsection{Proof of Theorem \ref{theo_eigenvalue_C1a_C2a}}\label{subs_singular_value_C1a}

To prove Theorem \ref{theo_eigenvalue_C1a_C2a}, we invoke two well-known results. The first result is a classical one whose proof can be found in \cite[Corollary 3.2 in p.41 and (7.12) in p.95]{Gohberg-Krein69}.

\begin{lemm}
    \label{lemm_schatten_decay}
    Let $p> 0$, $\Acal$ be a compact operator on a Hilbert space $\Hcal$, and $\lambda_j (\Acal)$ be eigenvalues of $\Acal$ in the descending modulus order. 
    \begin{enumerate}[label=(\roman*)]
        \item \label{enum_schatten_decay}If $\Acal$ belongs to the Schatten class $\Scal^p (\Hcal)$, then we have
        \[
            |\lambda_j (\Acal)|=o(j^{-1/p})
        \]
        as $j\to \infty$.
        \item \label{enum_singular_ev_decay}If $\Acal$ has the singular value asymptotics $s_j (\Acal)=O(j^{-p})$ as $j\to \infty$ and it has infinitely many eigenvalues, then 
        \[
            |\lambda_j (\Acal)|=O(j^{-p})
        \]
        as $j\to \infty$. 
    \end{enumerate}
\end{lemm}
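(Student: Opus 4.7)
The whole lemma is a routine consequence of Weyl's majorization inequalities for the singular values and eigenvalues of compact operators on a Hilbert space, in their additive and multiplicative forms. Since the eigenvalues $\{\lambda_j(\Acal)\}$ are enumerated in decreasing modulus order (with algebraic multiplicity) and the singular values $\{s_j(\Acal)\}$ in decreasing order, both inequalities take a particularly clean shape. My plan is to handle the two parts in parallel, each using one of these majorizations together with monotonicity of $|\lambda_j(\Acal)|$.

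For part \ref{enum_schatten_decay} I would invoke the additive Weyl majorization
\[
    \sum_{j=1}^n |\lambda_j(\Acal)|^p \leq \sum_{j=1}^n s_j(\Acal)^p,
\]
valid for every $n$ and every $p>0$. The hypothesis $\Acal\in \Scal^p(\Hcal)$ makes the right-hand side uniformly bounded in $n$, hence $\sum_j |\lambda_j(\Acal)|^p<\infty$. Combining summability with monotonicity yields
\[
    \frac{j}{2}\,|\lambda_j(\Acal)|^p \;\leq\; \sum_{k=\lceil j/2\rceil}^j |\lambda_k(\Acal)|^p \;\longrightarrow\; 0 \quad (j\to \infty)
\]
as a tail of a convergent series, so $j^{1/p}|\lambda_j(\Acal)|\to 0$, which is precisely $|\lambda_j(\Acal)|=o(j^{-1/p})$.

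For part \ref{enum_singular_ev_decay} I would instead use the multiplicative Weyl inequality
\[
    \prod_{j=1}^n |\lambda_j(\Acal)| \;\leq\; \prod_{j=1}^n s_j(\Acal).
\]
The assumption that $\Acal$ has infinitely many eigenvalues guarantees $|\lambda_n(\Acal)|>0$ for every $n$, so taking logarithms is legitimate; the monotonicity of $|\lambda_j(\Acal)|$ yields $n\log |\lambda_n(\Acal)|\leq \sum_{j=1}^n \log s_j(\Acal)$. Substituting the bound $s_j(\Acal)\leq Cj^{-p}$ and applying the elementary estimate $\sum_{j=1}^n \log j \geq n\log n - n$ gives
\[
    n\log |\lambda_n(\Acal)| \;\leq\; n\log C - p\bigl(n\log n - n\bigr),
\]
and dividing by $n$ produces $\log |\lambda_n(\Acal)| \leq -p\log n + O(1)$, that is, $|\lambda_n(\Acal)|=O(n^{-p})$.

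There is essentially no analytic obstacle once Weyl's majorization is granted as a black box; the two points that require a touch of care are the validity of the additive form for sub-unit exponents $p<1$, where $\sum_j x_j^p$ is not a norm but the majorization nonetheless holds via Ky Fan's argument and the concavity of $t\mapsto t^p$, and the role of the ``infinitely many eigenvalues'' hypothesis in part \ref{enum_singular_ev_decay}, which is invoked precisely to ensure all $|\lambda_n(\Acal)|$ are strictly positive so that the logarithmic argument is meaningful for every $n$.
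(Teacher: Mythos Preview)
Your argument is correct in both parts. The paper does not actually supply a proof of this lemma; it merely cites Gohberg--Kre\u{\i}n (Corollary~3.2 on p.~41 and (7.12) on p.~95), and what you have written is precisely the standard argument behind those references: the additive Weyl majorization $\sum_{j\le n}|\lambda_j|^p\le\sum_{j\le n}s_j^p$ (valid for all $p>0$ via the multiplicative inequality and the convexity of $t\mapsto e^{pt}$) for part~\ref{enum_schatten_decay}, and the multiplicative Weyl inequality combined with Stirling for part~\ref{enum_singular_ev_decay}. Your remark about the hypothesis ``infinitely many eigenvalues'' is also on point: it forces $|\lambda_n(\Acal)|>0$ for every $n$ and, incidentally, guarantees $s_n(\Acal)>0$ as well (otherwise $\Acal$ would be finite rank), so the logarithms are all well defined.
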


The second one is the Ky Fan theorem whose proof can be found in \cite[p. 32]{Gohberg-Krein69}.

\begin{theo}[Ky Fan theorem]
    \label{theo_ky_fan}
    If compact operators $\Acal$ and $\Bcal$ on a same Hilbert space satisfy $s_j (\Acal)\sim Cj^{-p}$ and $s_j (\Bcal)=o(j^{-p})$ for some $C>0$ and $p>0$, then
        \[
            s_j (\Acal+\Bcal)\sim Cj^{-p}.
        \]
\end{theo}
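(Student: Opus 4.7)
The plan is to derive the asymptotic $s_j(\Acal+\Bcal) \sim C j^{-p}$ from the classical Ky Fan singular value inequality
\begin{equation}
    s_{m+n-1}(X+Y) \leq s_m (X) + s_n(Y) \quad (m, n \geq 1)
\end{equation}
valid for any pair of compact operators $X, Y$ on the same Hilbert space. This inequality, which I would take as the starting input (it is the standard ``two-sided'' Fan inequality recorded in Gohberg--Krein), yields both the $\limsup$ and $\liminf$ parts of the conclusion after choosing the split of the index $j$ appropriately.

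For the upper bound, fix $\theta \in (0, 1)$ and for each sufficiently large $j$ set $m = \lceil \theta j \rceil$ and $n = j - m + 1$, so that $m + n - 1 = j$. Applying the Ky Fan inequality with $X = \Acal, Y = \Bcal$ gives
\begin{equation}
    s_j(\Acal + \Bcal) \leq s_{\lceil \theta j \rceil}(\Acal) + s_{j - \lceil \theta j \rceil + 1}(\Bcal).
\end{equation}
Multiplying by $j^p$ and passing to the limit, the first term tends to $C \theta^{-p}$ by the assumption $s_j(\Acal) \sim C j^{-p}$, while the second term tends to $0$ because $s_j(\Bcal) = o(j^{-p})$ and $j - \lceil \theta j \rceil + 1 \sim (1-\theta) j$. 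Thus $\limsup_{j\to \infty} j^p s_j(\Acal + \Bcal) \leq C \theta^{-p}$, and letting $\theta \to 1^-$ gives $\limsup \leq C$.

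For the lower bound, apply the same inequality to the splitting $\Acal = (\Acal + \Bcal) + (-\Bcal)$, using $s_n(-\Bcal) = s_n(\Bcal)$. For any $\varepsilon > 0$, choose $m = j$ and $n = \lceil \varepsilon j \rceil$ so that
\begin{equation}
    s_{j + \lceil \varepsilon j \rceil - 1}(\Acal) \leq s_j(\Acal + \Bcal) + s_{\lceil \varepsilon j \rceil}(\Bcal),
\end{equation}
which rearranges to a lower bound on $s_j(\Acal + \Bcal)$. Multiplying by $j^p$ and letting $j \to \infty$, the left-hand side contributes $C(1 + \varepsilon)^{-p}$ while the subtracted term contributes $0$, yielding $\liminf_{j\to \infty} j^p s_j(\Acal + \Bcal) \geq C (1+\varepsilon)^{-p}$. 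Sending $\varepsilon \to 0^+$ completes the matching lower bound, and together with the upper bound this gives $s_j(\Acal + \Bcal) \sim C j^{-p}$.

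The only real care needed is bookkeeping the ceilings/floors so that the indices appearing are positive integers and both $s_{\lceil \theta j \rceil}(\Acal)/j^{-p}$ and $s_{j+\lceil \varepsilon j \rceil - 1}(\Acal)/j^{-p}$ converge along the correct subsequences; this is routine since $s_j(\Acal) \sim C j^{-p}$ is a full (not merely subsequential) asymptotic. I do not foresee any substantive obstacle; the proof is a standard two-sided application of the Ky Fan inequality with a two-parameter limit procedure ($\theta \to 1$ and $\varepsilon \to 0$).
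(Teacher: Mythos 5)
Your proof is correct, and it is essentially the standard argument that the cited reference (Gohberg--Krein, p.~32) uses: the paper itself does not reprove the statement but simply quotes it. The two-sided application of the Fan inequality $s_{m+n-1}(X+Y)\le s_m(X)+s_n(Y)$ with the $\theta\to 1^-$ / $\varepsilon\to 0^+$ limits is exactly what is expected, and your bookkeeping of indices is sound.
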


Now we are ready to prove Theorem \ref{theo_eigenvalue_C1a_C2a}. 

\begin{proof}[Proof of Theorem \ref{theo_eigenvalue_C1a_C2a}]

Let $\lambda_j (\NP[0]^*)$ be eigenvalues of the zeroth mode operator $\NP[0]^*$ enumerated in the descending order. Then they are eigenvalues whose eigenfunctions are axially symmetric, namely, $\lambda_j (\NP[0]^*)=\rho_j (\NP^*)$.

We recall the decomposition \eqref{eq_sl_decomposition} in Proposition \ref{prop_sl_decomposition}. Taking a continuous global section $\xi: \partial \Sigma\to T^*\partial \Sigma$ such that $|\xi (p)|=1$ for all $p\in \partial \Sigma$, we have
\begin{align*}
    \frac{1}{2\pi}\int_{\partial\Sigma} \left(\left|-\frac{v_p}{4y|\xi(p)|}\right|+\left|-\frac{v_p}{4y|-\xi(p)|}\right|\right)\, \df \sigma (p)
    =\frac{1}{2\pi}\int_{\partial\Sigma} \frac{|v_p|}{2y}\,\df \sigma (p)
    =C_0.
\end{align*}
Thus, by Theorem \ref{theo_grubb}, we obtain 
\begin{equation}
    \label{eq_sl_pr_asympt}
    s_j \left(\Op \left(-\frac{v_p}{4y|\xi|}\right)\right) \sim C_0 j^{-1} \quad (j\to \infty). 
\end{equation}

Now we turn into the proof of the assertions \ref{enum_eigenvalue_C1a} and \ref{enum_eigenvalue_C2a}.

\ref{enum_eigenvalue_C1a} Suppose $\partial\Sigma$ is $C^{1, \alpha}$ for some $\alpha\in (0, 1)$. Then, by Proposition \ref{prop_sl_decomposition} and \eqref{eq_sl_pr_asympt}, we obtain $(v_p/2y)\SL[\partial\Sigma]\in \Scal^{1/\alpha}(\partial \Sigma)$. Now we apply Propositions \ref{theo_np_zero_mode_decomposition} and \ref{theo_nps_Sobolev} to the other terms in the decomposition \eqref{eq_operator_decomposition}. Then we obtain $\NP[0]^*\in \Scal^{1/\alpha+\varepsilon}(L^2 (\partial\Sigma))$ for all $\varepsilon>0$. We then infer from Lemma \ref{lemm_schatten_decay} \ref{enum_schatten_decay} that $|\lambda_j (\NP[0]^*, L^2 (\partial\Sigma))|=o(j^{-\alpha+\varepsilon})$ for all $\varepsilon>0$. This proves \eqref{1-100} since $\lambda_j (\NP[0]^*, L^2 (\partial\Sigma))=\lambda_j (\NP[0]^*, H^{-1/2}(\partial\Sigma))$ as we remarked in Subsection \ref{subs_schatten}.

\ref{enum_eigenvalue_C2a} Suppose $\partial\Sigma$ is $C^{2, \alpha}$ for some $\alpha\in (0, 1)$. We first prove
\begin{equation}
    \label{eq_singular_asymptotic_T}
    s_j(\Kcal_0^*)\sim C_0j^{-1}
\end{equation}
as $j\to \infty$. Here $C_0$ is the positive constant defined by \eqref{eq_C0}. By Propositions \ref{theo_np_zero_mode_decomposition}, \ref{theo_nps_Sobolev} and \ref{prop_sl_decomposition}, we have
\[
    \NP[0]^*-\Op \left(-\frac{v_p}{4y|\xi|}\right)=\NP[\partial\Sigma]^*+\Lcal+\Rcal^*\in \Scal^1(L^2(\partial\Sigma)).
\]
Hence we have $s_j(\NP[0]^*-\Op (-(v_p/4y|\xi|))) =o(j^{-1})$. Now we apply Theorem \ref{theo_ky_fan} and \eqref{eq_sl_pr_asympt} to obtain \eqref{eq_singular_asymptotic_T}.

We now apply Lemma \ref{lemm_schatten_decay} \ref{enum_singular_ev_decay} to obtain $|\lambda_j (\NP[0]^*)|=O(j^{-1})$ as $j\to \infty$. 
\end{proof}

\section{Proof of Theorem \ref{theo_np_ev_rot}}

\subsection{Principal symbol of the NP operator}

Throughout this section, we assume that $\partial\Omega$ is $C^\infty$. We do so since we employ the symbol calculus of $\Psi$DOs for the proof of Theorem \ref{theo_np_ev_rot}.

We abbreviate $\delta (p, p^\prime)$ to $\delta$. According to Proposition \ref{theo_ell_int_ae}, the integral kernels $K_0^*(p, p^\prime)$ and $S_0(p, p^\prime)$ of the zeroth mode NP operator and the zeroth mode single layer potential are expanded in $\delta=\delta (p, p^\prime)<1$ as
    \begin{align*}
        K_0^*(p, p^\prime)&=K_{\partial\Sigma}^* (p, p^\prime)\left(1+\frac{2(\Ell (\iu \delta)-(1+\delta^2)\Kell (\iu \delta))}{\pi (1+\delta^2)}\log \delta+\frac{\delta^2(g(\delta)-1)}{1+\delta^2}\right) \\
        &\quad -\frac{v_p}{4\pi y} \biggl(-\frac{2\Ell (\iu \delta)}{\pi (1+\delta^2)}\log \delta
        +\frac{4\log 2}{\pi}-1+\delta^2 \left(f(\delta)-\frac{g(\delta)-1}{1+\delta^2}\right)\biggr)
    \end{align*}
    and
    \[
        S_0 (p, p^\prime)=\frac{\Kell (\iu \delta)}{\pi^2}\log\delta-\frac{2\log 2}{\pi^2}-\frac{\delta^2 f(\delta)}{2\pi}
    \]
    where $f(z)$ and $g(z)$ are holomorphic functions. We observe that, if we define $\Fcal\subset L^1 ((0, 1))$ as
    \[
        \Fcal:=\{ h_1 (\delta)\log \delta+h_2 (\delta) \mid h_1(z), \ h_2(z) \text{ are holomorphic functions in } |z|<1\},
    \]
    then $K_0^*(p, p^\prime)$ and $S_0 (p, p^\prime)$ are of the form
    \[
        K_0^* (p, p^\prime)=K_{\partial\Sigma}^* (p, p^\prime)f_1(\delta (p, p^\prime))
        -\frac{v_p}{4\pi y}f_2 (\delta (p, p^\prime))
    \]
    and
    \[
        S_0 (p, p^\prime)=f_3 (\delta (p, p^\prime))
    \]
    where $f_j\in \Fcal$ ($j=1, 2, 3$). This implies that the operators $\NP[0]^*$ and $\SL[0]$ can be represented as $\Psi$DOs.

\begin{prop}
    \label{theo_symbols}
    If $\partial\Omega$ is $C^\infty$, then the principal symbol $\sigma^\mathrm{pr}(\NP[0])(x, \xi)$ of the operator $\NP[0]$ is
    \[
        \sigma^\mathrm{pr} (\NP[0]) (p, \xi)=-\frac{v_p}{4y |\xi|}.
    \]
\end{prop}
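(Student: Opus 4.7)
The plan is to take the $L^2$-adjoint of the decomposition \eqref{eq_operator_decomposition}. Since $\SL[\partial\Sigma]$ is self-adjoint (its kernel $\frac{1}{2\pi}\log|p-p'|$ is symmetric) and the multiplication operator $M_{v/(2y)}$ by the real-valued smooth function $v_p/(2y)$ is self-adjoint, we obtain
\[
    \NP[0] = \NP[\partial\Sigma] + \SL[\partial\Sigma]\circ M_{v/(2y)} + \Rcal,
\]
where $\Rcal$ is the $L^2$-adjoint of $\Rcal^*$. The three summands will be analyzed separately, and I will argue that only the middle one contributes at order $-1$.

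For the first summand, on a $C^\infty$ curve the two-dimensional NP kernel $K^*_{\partial\Sigma}(p,p')$ extends smoothly across the diagonal: parameterizing $\partial\Sigma$ by arclength and Taylor-expanding shows that the numerator $(p-p')\cdot\widetilde{n}_p$ vanishes to order $|p-p'|^2$ while $|p-p'|^2$ vanishes at the same order, so the quotient is $C^\infty$. Hence $\NP[\partial\Sigma]$ is a smoothing operator and contributes $0$ to the principal symbol.

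For the remainder $\Rcal$, I will show it is a classical $\Psi$DO of order at most $-3$. Combining Lemma \ref{prop_AB_formula} with Proposition \ref{theo_ell_int_ae}, and using the Taylor expansions $\Kell(\iu\delta)=\pi/2+O(\delta^2)$ and $\Ell(\iu\delta)=\pi/2+O(\delta^2)$ (immediate from \eqref{eq_elliptic_first} and \eqref{eq_elliptic_second}), the coefficient of $\log\delta$ in both $A_0(\delta)-1$ and $B_0(\delta)+\log\delta$ is $O(\delta^2)$, so
\[
    A_0(\delta)-1 = O(\delta^2\log\delta), \qquad B_0(\delta)+\log\delta = \frac{4\log 2}{\pi}-1+O(\delta^2\log\delta).
\]
Substituting these into \eqref{eq_a0_representation} and noting that the additive constant in the $B_0$ expansion multiplied by the smooth factor $v_p/(4\pi y)$ yields a smooth kernel, one finds that the kernel of $\Rcal^*$ (and hence of $\Rcal$) equals a $C^\infty$ function of $(p,p')$ plus a term of the form $c(p,p')|p-p'|^2\log|p-p'|$ with smooth $c$, corresponding to a classical $\Psi$DO of order at most $-3$.

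Finally, for the middle summand I use the fact cited just before the proposition that $\SL[\partial\Sigma]$ on a $C^\infty$ curve is a classical $\Psi$DO of order $-1$ with principal symbol $-\frac{1}{2|\xi|}$, while $M_{v/(2y)}$ is a classical $\Psi$DO of order $0$ with symbol $v_p/(2y)$ (smooth since $y$ is bounded away from zero on $\overline\Sigma$). The composition rule for classical $\Psi$DO's then gives
\[
    \sigma^{\mathrm{pr}}\bigl(\SL[\partial\Sigma]\circ M_{v/(2y)}\bigr)(p,\xi) = -\frac{1}{2|\xi|}\cdot \frac{v_p}{2y} = -\frac{v_p}{4y|\xi|},
\]
and together with the vanishing contributions from $\NP[\partial\Sigma]$ and $\Rcal$ this yields the claim. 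The main technical hurdle is the remainder estimate: one must carefully book-keep the elliptic-integral expansions to confirm that the residual kernel has singularity no worse than $|p-p'|^2\log|p-p'|$, so that it genuinely sits at order $\leq -3$ and cannot perturb the principal symbol.
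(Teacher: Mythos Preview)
Your proof is correct and follows essentially the same route as the paper: both use the decomposition \eqref{eq_operator_decomposition}, note that $\NP[\partial\Sigma]^*$ (hence $\NP[\partial\Sigma]$) has a $C^\infty$ kernel on a smooth curve and is therefore smoothing, and read off the principal symbol from the single-layer term via $\sigma^{\mathrm{pr}}(\SL[\partial\Sigma])=-1/(2|\xi|)$. The only differences are cosmetic: you pass to the adjoint first (matching the statement's $\NP[0]$ rather than $\NP[0]^*$), and you spell out in more detail than the paper why the remainder $\Rcal$ is a $\Psi$DO of order $\leq -3$, whereas the paper leaves this implicit in the $\Fcal$-structure discussion preceding the proposition.
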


\begin{proof}
    If $\partial\Omega$ is $C^\infty$, then the integral kernel of $\NP[\partial\Sigma]^*$ belongs to $C^\infty (\partial\Sigma \times \partial\Sigma)$ (see \cite{FKMdr}). Thus, by Proposition \ref{theo_ell_int_ae} and \eqref{eq_operator_decomposition}, the principal symbol $\sigma^\mathrm{pr}(\NP[0])$ of $\NP[0]^*$ is calculated as
    \[
        \sigma^\mathrm{pr} (\NP[0]^*)(p, \xi)=\sigma^\mathrm{pr} \left(\frac{v_p}{2y}\SL[\partial\Sigma]\right)(p, \xi)=-\frac{v_p}{4y|\xi|},
    \]
    as desired.
\end{proof}

\subsection{Proof of Theorem \ref{theo_np_ev_rot}}

Since $-\SL[0]$ is a positive definite self-adjoint operator on $L^2(\partial\Sigma)$, it admits the complex power $(-\SL[0])^z$ for $z\in \Cbb$. Moreover, since $\SL[0]$ is a $\Psi$DO, the complex power $(-\SL[0])^z$ is also a $\Psi$DO with principal symbol $\sigma^\mathrm{pr} ((-\SL[0])^z)=\sigma^\mathrm{pr} (-\SL[0])^z=(2|\xi|)^{-z}$ by Proposition \ref{theo_symbols}. In particular, $(-\SL[0])^{1/2}$ is a unitary operator from $H^{-1/2}(\partial\Sigma)$ to $L^2(\partial\Sigma)$ and its inverse is $(-\SL[0])^{-1/2}$. Let
\begin{equation}
    \label{eq_np_symmetrized}
    \widehat{\Kcal}_0:=(-\Scal_0)^{1/2} \Kcal_0^* (-\Scal_0)^{-1/2}.
\end{equation}

\begin{prop}
    \label{theo_symmetrization_zero}
    Assume that $\partial\Sigma$ is $C^\infty$. Then the operator $\widehat{\Kcal}_0$ is self-adjoint on $L^2(\partial\Sigma)$ and we have
    \[
        \sigma(\widehat{\Kcal}_0, L^2(\partial\Sigma))=\sigma (\Kcal_0^*, H^{-1/2}(\partial\Sigma)).
    \]
\end{prop}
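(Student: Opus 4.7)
The plan is to exploit Corollary \ref{coro_inner_product_decomposition}, according to which $-\Scal_0$ is positive-definite and invertible from $H^{-1/2}(\partial\Sigma)$ onto $H^{1/2}(\partial\Sigma)$. Because $\partial\Sigma$ is $C^\infty$, the operator $-\Scal_0$ is a classical $\Psi$DO of order $-1$ with positive principal symbol $(2|\xi|)^{-1}$, so Seeley's construction of complex powers produces $(-\Scal_0)^{1/2}$ as a classical $\Psi$DO of order $-1/2$ which is bounded and invertible as a map $H^{-1/2}(\partial\Sigma) \to L^2(\partial\Sigma)$, with inverse $(-\Scal_0)^{-1/2}$. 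Verifying these mapping properties of the complex power is essentially the only use of the $C^\infty$ hypothesis and is the main technical ingredient; the rest of the proof is formal.

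The first step I would take is to verify the intertwining of inner products
\[
\jbk{(-\Scal_0)^{1/2} f, (-\Scal_0)^{1/2} g}_{L^2(\partial\Sigma)} = \jbk{f, g}_0 \qquad (f, g \in H^{-1/2}(\partial\Sigma)).
\]
On smooth $f, g$ this is immediate from the $L^2$-self-adjointness of $(-\Scal_0)^{1/2}$ together with the definition \eqref{eq_inner_product_mode} of $\jbk{\cdot, \cdot}_0$, because for $L^2$ arguments the duality pairing reduces to the $L^2$ inner product; the identity then extends to all of $H^{-1/2}(\partial\Sigma)$ by density. Hence $(-\Scal_0)^{1/2}$ is a unitary isomorphism from $(H^{-1/2}(\partial\Sigma), \jbk{\cdot, \cdot}_0)$ onto $L^2(\partial\Sigma)$.

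Self-adjointness of $\widehat{\Kcal}_0$ on $L^2(\partial\Sigma)$ then follows by pure transport of structure from the self-adjointness of $\Kcal_0^*$ on $(H^{-1/2}(\partial\Sigma), \jbk{\cdot, \cdot}_0)$ established in Proposition \ref{theo_Plemelj_zero}: writing $u = (-\Scal_0)^{-1/2} f$ and $v = (-\Scal_0)^{-1/2} g$ for $f, g \in L^2(\partial\Sigma)$, the unitary intertwining yields
\[
\jbk{\widehat{\Kcal}_0 f, g}_{L^2} = \jbk{\Kcal_0^* u, v}_0 = \jbk{u, \Kcal_0^* v}_0 = \jbk{f, \widehat{\Kcal}_0 g}_{L^2}.
\]

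Finally, the spectral identity $\sigma(\widehat{\Kcal}_0, L^2(\partial\Sigma)) = \sigma(\Kcal_0^*, H^{-1/2}(\partial\Sigma))$ follows at once from the similarity
\[
\widehat{\Kcal}_0 - \lambda I = (-\Scal_0)^{1/2}(\Kcal_0^* - \lambda I)(-\Scal_0)^{-1/2}
\]
combined with the bounded invertibility of $(-\Scal_0)^{\pm 1/2}$ between $H^{-1/2}(\partial\Sigma)$ and $L^2(\partial\Sigma)$, which forces invertibility of one side to be equivalent to that of the other.
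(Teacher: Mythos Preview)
Your argument is correct and matches the paper's approach: both establish that $(-\Scal_0)^{1/2}$ is a unitary isomorphism from $(H^{-1/2}(\partial\Sigma),\jbk{\cdot,\cdot}_0)$ onto $L^2(\partial\Sigma)$ and then read off self-adjointness and the spectral identity from unitary equivalence. The only cosmetic difference is that the paper verifies self-adjointness by the one-line $L^2$-adjoint computation $(-\Scal_0)^{1/2}\Kcal_0^*(-\Scal_0)^{-1/2}=(-\Scal_0)^{-1/2}\Kcal_0(-\Scal_0)^{1/2}$ coming straight from the Plemelj relation \eqref{Plemelj_k}, whereas you transport the self-adjointness of $\Kcal_0^*$ on $\Hcal_0^*$ through the unitary; the content is identical.
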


\begin{proof}
    One can easily see from \eqref{Plemelj_k} that
    \[
        (-\Scal_0)^{1/2} \Kcal_0^* (-\Scal_0)^{-1/2}=(-\Scal_0)^{-1/2}\Kcal_0 (-\Scal_0)^{1/2}=((-\Scal_0)^{1/2} \Kcal_0^* (-\Scal_0)^{-1/2})^*.
    \]
    This means that the $\widehat{\Kcal}_0$ is self-adjoint. Since $\NP[0]^*$ and $\widehat{\NP[]}_0$ are unitarily equivalent, their spectra coincide.
\end{proof}

We also recall the following Weyl asymptotics by Birman and Solomjak (recall the terminologies in Subsection \ref{subs_psiDO}): 

\begin{theo}[\cite{Birman-Solomjak77}, see also {\cite[Section 6]{Ponge23}}]\label{theo_BS}
    Let $M$ be a $C^\infty$-smooth one-dimensional compact manifold and $\Acal=\Op (a)$ be a compact self-adjoint $\Psi$DO with the principal symbol $a^\mathrm{pr}(x, \xi)$ which is locally in $C^\tau S^{-1}(\Rbb \times (\Rbb\setminus \{0\}))$ for some $\tau>0$. We denote the positive and negative eigenvalues of $\Acal$ as $\lambda^+_j (\Acal)$ and $-\lambda^-_j (\Acal)$ respectively and enumerate them in the descending order 
    \[
        \lambda^\pm_1(\Acal)\geq \lambda^\pm_2 (\Acal)\geq \cdots
    \]
    as long as they exist. Then 
    \[
        \lambda^\pm_j (\Acal)\sim\frac{1}{2\pi j}\int_M (a^\mathrm{pr}(x, \xi (x))_\pm +a^\mathrm{pr}(x, -\xi (x))_\pm)\, \df \sigma (x), 
    \]
    where $\xi: M\to T^*M$ is a continuous global section such that $|\xi(x)|=1$ for all $x\in M$ with respect to the metric $g$ and 
    \[
        t_\pm:=
        \begin{cases}
            |t| & \text{if } \pm t\geq 0, \\
            0 & \text{if } \pm t\leq 0
        \end{cases}
    \]
    for $t\in \Rbb$. 
\end{theo}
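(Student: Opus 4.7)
The plan is to prove the Weyl-type eigenvalue asymptotics in two stages: first establish it for smooth principal symbols via the classical pseudodifferential Weyl law, then extend to the low-regularity case $a^\mathrm{pr} \in C^\tau S^{-1}$ by a mollification argument combined with a max--min perturbation estimate.

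\textbf{Stage 1 (smooth symbols).} Suppose first that $a^\mathrm{pr} \in C^\infty S^{-1}$ so that $\Acal$ is a classical self-adjoint $\Psi$DO of order $-1$. The eigenvalue counting functions $N^\pm(\lambda) := \#\{j : \lambda_j^\pm(\Acal) \geq \lambda\}$ satisfy the standard Weyl law
\[
N^\pm(\lambda) \sim \frac{1}{2\pi}\,\mathrm{vol}\bigl\{(x, \xi) \in T^*M : \pm a^\mathrm{pr}(x, \xi) \geq \lambda\bigr\}
\]
as $\lambda \to 0^+$, provable via heat-kernel or resolvent-trace expansions on a closed manifold. Since $\dim M = 1$ and $a^\mathrm{pr}$ is positively homogeneous of degree $-1$ in $\xi$ for $|\xi| \geq 1$, the cotangent fiber at each $x$ splits into two half-lines ($\xi > 0$ and $\xi < 0$); on each, the superlevel set $\{\pm a^\mathrm{pr}(x, \xi) \geq \lambda\}$ is an interval of length $\lambda^{-1}(a^\mathrm{pr}(x, \pm\xi(x)))_\pm$. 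Integrating over $M$ and inverting the monotone function $N^\pm$ yields $\lambda_j^\pm \sim C^\pm/j$ with the constants as stated.

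\textbf{Stage 2 (non-smooth symbols).} For $a^\mathrm{pr} \in C^\tau S^{-1}$, choose a finite atlas with subordinate partition of unity and mollify $a^\mathrm{pr}$ in the $x$-variable at scale $\varepsilon > 0$ in each chart, producing $a_\varepsilon \in C^\infty S^{-1}$ with
\[
\|a^\mathrm{pr} - a_\varepsilon\|_{C^0 S^{-1}} \lesssim \varepsilon^\tau.
\]
Stage 1 applied to $\Op(a_\varepsilon)$ gives $\lambda_j^\pm(\Op(a_\varepsilon)) \sim C_\varepsilon^\pm/j$, and $C_\varepsilon^\pm \to C^\pm$ as $\varepsilon \to 0$ by dominated convergence. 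The passage to the limit rests on the multiplicative comparison
\[
N^\pm_{\Acal}(\lambda) = N^\pm_{\Op(a_\varepsilon)}\bigl(\lambda(1 + O(\varepsilon^\tau))\bigr),
\]
obtained from the max--min principle combined with a Rayleigh-quotient bound $|\langle \Op(a^\mathrm{pr} - a_\varepsilon) u, u\rangle| \lesssim \varepsilon^\tau\,\langle |\Op(a^\mathrm{pr})|\, u, u\rangle$ on suitably microlocalized test functions. Inverting this relation and then sending $\varepsilon \to 0$ delivers $\lambda_j^\pm(\Acal) \sim C^\pm/j$.

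\textbf{Main obstacle.} The crux is the perturbation control at the threshold order $-1$: the difference $\Op(a^\mathrm{pr} - a_\varepsilon)$ has the same order as $\Acal$ itself, so both naive operator-norm bounds and direct Schatten-class estimates (of the type underlying Theorem \ref{theo_DR14} and Proposition \ref{theo_np_zero_mode_decomposition}) give at best $O(j^{-1})$ singular-value decay for the perturbation, not the $o(j^{-1})$ required for a clean application of Ky Fan's theorem (Theorem \ref{theo_ky_fan}). Birman and Solomjak circumvent this by exploiting the multiplicative nature of the perturbation: the symbol is pointwise small by a factor $\varepsilon^\tau$, which translates into a multiplicative rescaling of the spectral counting function rather than an additive one. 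Implementing this refined analysis with only $C^\tau$ regularity in $x$---where standard smooth symbol calculus fails---is the technical heart of the argument; an alternative semiclassical trace-formula route (as in Ponge's treatment) is available but demands comparable care at the same regularity threshold.
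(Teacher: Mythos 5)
The paper does not prove Theorem \ref{theo_BS}; it is cited directly from \cite{Birman-Solomjak77} (with \cite{Ponge23} as a modern reference) and invoked in the proof of Theorem \ref{theo_np_ev_rot}, so there is no paper-internal proof against which to compare your attempt. Your two-stage plan --- the classical Weyl law for smooth order-$(-1)$ $\Psi$DOs followed by a mollification-plus-bracketing argument for $C^\tau$ symbols --- does track the general shape of the Birman--Solomjak variational method and is a reasonable reconstruction of the cited result.

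That said, the perturbation bound you propose is mis-targeted and would fail as written. You write
\[
|\langle \Op(a^\mathrm{pr} - a_\varepsilon) u, u\rangle| \lesssim \varepsilon^\tau\,\langle |\Op(a^\mathrm{pr})|\, u, u\rangle,
\]
but $a^\mathrm{pr}$ is not assumed elliptic --- and in the paper's application to $a^\mathrm{pr}(p,\xi) = -v_p/(4y|\xi|)$ it genuinely changes sign and vanishes on part of $\partial\Sigma$ --- so $|\Op(a^\mathrm{pr})|$ is not a positive elliptic operator of order $-1$ and cannot dominate an arbitrary order-$(-1)$ perturbation. The comparison must instead be made against a fixed positive elliptic reference $\Psi$DO of order $-1$, e.g.\ $(1-\Delta_M)^{-1/2}$, from which one deduces $s_j(\Op(a^\mathrm{pr}-a_\varepsilon)) \le C\varepsilon^\tau j^{-1}$. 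One then needs a slightly stronger Ky Fan-type statement than Theorem \ref{theo_ky_fan} (which demands $o(j^{-1})$): namely that $\limsup_j j\,s_j(\Bcal)\le \varepsilon$ forces the leading asymptotic coefficient of $\Acal+\Bcal$ to lie within $O(\varepsilon)$ of that of $\Acal$, so the additive-in-coefficient estimate, not a multiplicative rescaling of $N^\pm$, is what actually closes the $\varepsilon\to 0$ limit. Finally, since $\Acal$ is not sign-definite, passing from singular-value asymptotics to separate asymptotics for $\lambda^+_j$ and $\lambda^-_j$ requires bracketing the positive and negative spectral subspaces individually, a step the sketch glosses over.
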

Now we are ready to prove Theorem \ref{theo_np_ev_rot}.

\begin{proof}[Proof of Theorem \ref{theo_np_ev_rot}]
    We denote the $j$-th positive and negative eigenvalue of $\Kcal_0^*$ by $\lambda^+_j(\Kcal_0^*)$ and $-\lambda^-_j (\Kcal_0^*)$, respectively. Since $\Kcal_0^*$ is the restriction of the NP operator $\NP^*$ to axially symmetric functions, we have
    \begin{equation}
        \label{eq_NP_NP0_pm_ev}
        \rho^\pm_j (\NP^*)=\lambda^\pm_j(\Kcal_0^*)=\lambda^\pm_j (\widehat{\Kcal}_0)
    \end{equation}
    by Proposition \ref{theo_symmetrization_zero}.

    Similarly, we denote the $j$-th eigenvalue of $\Kcal_0^*$ by $\lambda_j(\Kcal_0^*)$ in the sense of ordering
    \[
        |\lambda_1(\Kcal_0^*)|\geq |\lambda_2(\Kcal_0^*)|\geq \cdots.
    \]
    Then we have
    \begin{equation}
        \label{eq_NP_NP0_abs_ev}
        |\rho_j (\NP^*)|=|\lambda_j (\Kcal_0^*)|=|\lambda_j (\widehat{\Kcal}_0)|.
    \end{equation}

    By the definition of $\widehat{\NP[]}_0$, Proposition \ref{theo_symmetrization_zero} and the symbol calculus, $\widehat{\NP[]}_0$ is a self-adjoint $\Psi$DO of order $-1$ which has the same principal symbol $a^\mathrm{pr}(p, \xi):=-v_p/(4y|\xi|)$ as the operator $\NP[0]^*$. Thus we can employ Theorem \ref{theo_BS} and obtain
    \begin{equation}\label{eq_BS_conclusion}
        \lambda^\pm_j(\widehat{\NP[]}_0)\sim C^\pm_0 j^{-1}, \quad
        |\lambda_j(\widehat{\NP[]}_0)|\sim C_0 j^{-1}
    \end{equation}
    where
    \begin{align*}
        C_0^\pm :=&\,\frac{1}{2\pi}\int_{\partial\Sigma} (a^\mathrm{pr}(p, \xi (p))_\pm+a^\mathrm{pr}(p, -\xi (p))_\pm)\, \df \sigma (p) \\
        =&\,\mp\frac{1}{4\pi}\int_{\mp v_p\geq 0} \frac{v_p}{y}\, \df \sigma (p)
    \end{align*}
    and
    \[
        C_0:=\frac{1}{4\pi}\int_{\partial\Sigma} \frac{|v_p|}{y}\, \df \sigma (p).
    \]
    Thus we have
    \eqref{eq_Cpm0} and \eqref{eq_C0}.
\end{proof}

\section{Proof of Theorem \ref{theo_essential_spectrum}}

We compare the essential spectra of $\NP[\partial\Sigma]^*$ and $\NP[0]^*$. Since $\NP[\partial\Sigma]^*$ and $\NP[0]^*$ are realized as self-adjoint operators with respect to the inner products $\jbk{\cdot, \cdot}_{*}=\jbk{\cdot, \cdot}_{*, \partial\Sigma}$ and $\jbk{\cdot, \cdot}_0$ respectively, we introduce the notation
\begin{align*}
    \Hcal_{\partial\Sigma}^*:=(H^{-1/2}(\partial\Sigma), \jbk{\cdot, \cdot}_{*}),  \quad
    \Hcal_0^*:=(H^{-1/2}(\partial\Sigma), \jbk{\cdot, \cdot}_0)
\end{align*}
in order to avoid possible confusion. Let $\| \cdot \|_{*}$ and $\| \cdot \|_{0}$ respectively denote norms on $\Hcal_{\partial\Sigma}^*$ and $\Hcal_0^*$. We emphasize that both are equivalent to the $H^{-1/2}(\partial\Sigma)$-norm.

We obtain the following theorem.
\begin{theo}
    \label{theo_np_np0_ess_spec}
    If $\partial \Sigma$ is Lipschitz, then it holds that
    \begin{equation}
        \label{eq_np_np0_ess_spec}
        \sigma_\mathrm{ess}(\NP[\partial\Sigma]^*, \Hcal_{\partial\Sigma}^*)
        =\sigma_\mathrm{ess}(\NP[0]^*, \Hcal_0^*).
    \end{equation}
\end{theo}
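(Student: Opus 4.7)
The plan is to reduce Theorem \ref{theo_np_np0_ess_spec} to invariance of essential spectrum under compact perturbations, using the decomposition \eqref{eq_operator_decomposition}, namely
\[
    \NP[0]^* - \NP[\partial\Sigma]^* = \frac{v_p}{2y}\SL[\partial\Sigma] + \Rcal^*.
\]
Compactness of $\Rcal^*$ on $H^{-1/2}(\partial\Sigma)$ is already given by Corollary \ref{coro_rs_lipschitz}. It remains to argue that $\frac{v_p}{2y}\SL[\partial\Sigma]$ is also compact on $H^{-1/2}(\partial\Sigma)$; I would factor it as
\[
    H^{-1/2}(\partial\Sigma)\xrightarrow{\SL[\partial\Sigma]} H^{1/2}(\partial\Sigma)\xrightarrow{\mathrm{Rellich}}L^2(\partial\Sigma)\xrightarrow{v_p/(2y)\cdot}L^2(\partial\Sigma)\longhookrightarrow H^{-1/2}(\partial\Sigma),
\]
where the first arrow is bounded (standard layer potential theory on Lipschitz curves), the second is compact by Rellich-Kondrachov, the third is bounded because $v_p/(2y)\in L^\infty(\partial\Sigma)$ (unit normal component divided by a smooth function bounded away from zero), and the fourth is continuous. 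The composition is therefore compact, so that $\NP[0]^*-\NP[\partial\Sigma]^*$ is a compact operator on $H^{-1/2}(\partial\Sigma)$.

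The next step is to translate the compactness conclusion into equality of essential spectra in spite of the fact that $\NP[\partial\Sigma]^*$ and $\NP[0]^*$ are self-adjoint with respect to \emph{different} inner products $\jbk{\cdot,\cdot}_*$ and $\jbk{\cdot,\cdot}_0$ on the same space $H^{-1/2}(\partial\Sigma)$. The key observation is that both inner products induce norms equivalent to the standard Sobolev norm (the first by invertibility of $\SL$, the second by Corollary \ref{coro_inner_product_decomposition}). Fredholmness of a bounded operator is a purely topological property, so the Fredholm essential spectrum
\[
    \sigma_\mathrm{ess}^{\mathrm{Fr}}(T):=\{\Gl\in\Cbb\mid T-\Gl\text{ is not Fredholm on }H^{-1/2}(\partial\Sigma)\}
\]
is independent of which equivalent inner product we equip $H^{-1/2}(\partial\Sigma)$ with. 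For a bounded self-adjoint operator on a Hilbert space the Weyl essential spectrum agrees with $\sigma_\mathrm{ess}^{\mathrm{Fr}}$, so
\[
    \sigma_\mathrm{ess}(\NP[\partial\Sigma]^*,\Hcal_{\partial\Sigma}^*)=\sigma_\mathrm{ess}^{\mathrm{Fr}}(\NP[\partial\Sigma]^*),\quad \sigma_\mathrm{ess}(\NP[0]^*,\Hcal_0^*)=\sigma_\mathrm{ess}^{\mathrm{Fr}}(\NP[0]^*).
\]
Since $\sigma_\mathrm{ess}^{\mathrm{Fr}}$ is invariant under compact perturbation (no self-adjointness needed), the compactness of $\NP[0]^*-\NP[\partial\Sigma]^*$ established above yields the two Fredholm essential spectra coincide, proving \eqref{eq_np_np0_ess_spec}.

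The main obstacle is the conceptual one just addressed: invariance of the essential spectrum is usually quoted for compact self-adjoint perturbations within a single Hilbert space, whereas here the two operators live naturally on two different Hilbert-space structures on $H^{-1/2}(\partial\Sigma)$. The correct viewpoint is to argue at the Banach-space level via Fredholmness; the self-adjoint Hilbert-space structures are only needed to identify the two notions of essential spectrum with $\sigma_\mathrm{ess}^{\mathrm{Fr}}$. A minor technical point to verify, which should be routine, is boundedness of $\NP[\partial\Sigma]^*$ on $H^{-1/2}(\partial\Sigma)$ for Lipschitz $\partial\Sigma$ (this follows from the standard Coifman-McIntosh-Meyer $L^2$-boundedness together with Plemelj's symmetrization and duality).
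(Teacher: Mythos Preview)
Your proof is correct and the compactness argument for $\NP[0]^*-\NP[\partial\Sigma]^*$ is exactly the paper's: the same decomposition \eqref{eq_operator_decomposition}, Corollary \ref{coro_rs_lipschitz} for $\Rcal^*$, and the same factorization of $(v_p/2y)\SL[\partial\Sigma]$ through $H^{1/2}\hookrightarrow L^2$ via Rellich--Kondrachov.

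The one genuine difference is how you bridge the two inner products. The paper isolates this issue as a separate abstract lemma (Proposition \ref{lemm_weyl_variant}): given two equivalent Hilbert structures on the same Banach space and two operators, each self-adjoint for one of them, with compact difference, the essential spectra agree; the proof is by transporting Weyl sequences from one inner product to the other. You instead pass through the Fredholm essential spectrum $\sigma_\mathrm{ess}^{\mathrm{Fr}}$, observing that Fredholmness is a Banach-space notion (hence insensitive to which equivalent inner product is used), that $\sigma_\mathrm{ess}^{\mathrm{Fr}}$ is stable under arbitrary compact perturbations, and that for bounded self-adjoint operators the Weyl and Fredholm essential spectra coincide. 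Your route is shorter and more conceptual, at the cost of invoking the equivalence of essential-spectrum definitions for self-adjoint operators as a black box; the paper's route is more self-contained and yields a reusable lemma. Either way the result is the same.
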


Actually, this is an immediate consequence of the following variant of the Weyl theorem:

\begin{prop}\label{lemm_weyl_variant}
    Let $\Hcal$ be a Banach space and $\jbk{\cdot, \cdot}_1$ and $\jbk{\cdot, \cdot}_2$ be inner products on $\Hcal$ which induce norms equivalent to the original norm on $\Hcal$ and thus both of $\Hcal_1=(\Hcal, \jbk{\cdot, \cdot}_1)$ and $\Hcal_2=(\Hcal, \jbk{\cdot, \cdot}_2)$ are Hilbert spaces. If $\Acal_1$ and $\Acal_2$ are bounded linear operators on $\Hcal$ such that
    \begin{itemize}
        \item $\Acal_1$ and $\Acal_2$ are self-adjoint operators on $\Hcal_1$ and $\Hcal_2$, respectively;
        \item $\Acal_1-\Acal_2$ is compact on $\Hcal$,
    \end{itemize}
    then
    \[
        \sigma_\mathrm{ess} (\Acal_1, \Hcal_1)
        =\sigma_\mathrm{ess} (\Acal_2, \Hcal_2).
    \]
\end{prop}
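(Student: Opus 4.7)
The plan is to prove Proposition \ref{lemm_weyl_variant} via Weyl's singular-sequence criterion for the essential spectrum of a bounded self-adjoint operator on a Hilbert space, exploiting the fact that the weak topology and the class of compact operators are intrinsic to the underlying Banach space $\Hcal$.

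First, I would observe three structural facts that follow immediately from the equivalence of the two norms with the original norm on $\Hcal$. (a) A sequence is bounded in $\Hcal_1$ iff it is bounded in $\Hcal_2$ iff it is bounded in $\Hcal$. (b) Since each $\Hcal_i$ has the same continuous dual as $\Hcal$ (topological duals only depend on the norm up to equivalence, and equivalent norms yield the same dual), weak convergence coincides in $\Hcal_1$, $\Hcal_2$, and $\Hcal$. (c) An operator is compact on $\Hcal_1$ iff it is compact on $\Hcal_2$ iff it is compact on $\Hcal$; in particular $\Acal_1-\Acal_2$ is compact on each of them, and sends weakly null sequences to norm-null sequences in the single topology shared by the three spaces.

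Next I would recall Weyl's criterion: for a bounded self-adjoint operator $\Acal$ on a Hilbert space $\Hcal^\prime$, a real number $\lambda$ belongs to $\sigma_\mathrm{ess}(\Acal,\Hcal^\prime)$ if and only if there exists a \emph{singular Weyl sequence}, i.e.\ a sequence $(u_n)\subset \Hcal^\prime$ with $\|u_n\|_{\Hcal^\prime}=1$, $u_n\rightharpoonup 0$ weakly, and $(\Acal-\lambda)u_n\to 0$ in norm. Since $\Acal_1$ is self-adjoint on $\Hcal_1$ and $\Acal_2$ is self-adjoint on $\Hcal_2$, this criterion applies to each side of \eqref{eq_np_np0_ess_spec}.

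Now suppose $\lambda\in \sigma_\mathrm{ess}(\Acal_1,\Hcal_1)$ and pick a Weyl sequence $(u_n)$ for $\Acal_1$ with $\|u_n\|_1=1$, $u_n\rightharpoonup 0$, and $(\Acal_1-\lambda)u_n\to 0$ in $\Hcal_1$. By the norm equivalence, $\|u_n\|_2$ is bounded above and below by positive constants, so $v_n:=u_n/\|u_n\|_2$ still satisfies $v_n\rightharpoonup 0$ (weak convergence is the same topology) and $(\Acal_1-\lambda)v_n\to 0$ in the common norm topology. Since $\Acal_1-\Acal_2$ is compact and $v_n\rightharpoonup 0$, we have $(\Acal_1-\Acal_2)v_n\to 0$, whence
\begin{equation*}
    (\Acal_2-\lambda)v_n
    =(\Acal_1-\lambda)v_n-(\Acal_1-\Acal_2)v_n
    \longrightarrow 0.
\end{equation*}
Thus $(v_n)$ is a Weyl sequence for $\Acal_2$ at $\lambda$ in $\Hcal_2$, giving $\lambda\in \sigma_\mathrm{ess}(\Acal_2,\Hcal_2)$. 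Interchanging the roles of $\Acal_1$ and $\Acal_2$ yields the reverse inclusion, and Theorem \ref{theo_np_np0_ess_spec} then follows on applying the proposition to $\Acal_1=\NP[\partial\Sigma]^*$ and $\Acal_2=\NP[0]^*$ together with the compactness of $\Rcal^*$ from Corollary \ref{coro_rs_lipschitz} and the decomposition \eqref{eq_operator_decomposition} (the single-layer term is compact on $H^{-1/2}(\partial\Sigma)$ by a standard mapping property). The only delicate point is the bookkeeping in step three: one must verify that ``weak null,'' ``compact,'' and ``norm-null'' may all be interpreted unambiguously using any of the three equivalent norms, which is exactly what the norm-equivalence buys us.
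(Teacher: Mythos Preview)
Your proof is correct and follows the same Weyl-sequence strategy as the paper's own argument. Your version is in fact a bit cleaner: where the paper uses the Riesz representation to transfer weak nullity from $\Hcal_1$ to $\Hcal_2$ and then passes to a subsequence to make $(\Acal_1-\Acal_2)\varphi_n\to 0$, you observe directly that the weak topology and the class of compact operators depend only on the Banach-space structure, and use complete continuity of compact operators to avoid the subsequence.
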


\begin{proof}
    Let $\|\varphi\|_j:=\jbk{\varphi, \varphi}_j^{1/2}$ be the norm induced by the inner product $\jbk{\cdot, \cdot}_j$.

    Let $\lambda \in \sigma_\mathrm{ess}(\Acal_1, \Hcal_1)$ and let $\{\varphi_n\}_{n=1}^\infty$ be the Weyl sequence of $\lambda$ and $\Acal_1$, that is, $\| \varphi_n \|_1=1$ for all $n$, $\varphi_n$ weakly converges to $0$, and $\| \Acal_1 [\varphi_n]-\lambda \varphi_n\|_1\to 0$. Let $\psi_n:=\varphi_n/\|\varphi_n\|_2$. We show that a subsequence of $\psi_n$ makes a Weyl sequence of $\lambda$ and $\Acal_2$, and hence $\lambda \in \sigma_\mathrm{ess} (\Acal_2, \Hcal_2)$.

    It is obvious by the definition that $\|\psi_n\|_2=1$ for all $n$. Let $\varphi\in \Hcal$. By the Riesz representation theorem, there exists the unique $\Bcal[\varphi]\in \Hcal$ such that $\jbk{\psi, \varphi}_2=\jbk{\psi, \Bcal [\varphi]}_1$ for all $\psi \in \Hcal$, and
    \[
        \|\Bcal [\varphi]\|_1=\sup_{\psi\in \Hcal\setminus \{ 0 \}}\frac{|\jbk{\psi, \varphi}_2|}{\|\psi\|_1}\approx \|\varphi\|_2 \approx \|\varphi\|_1.
    \]
    In particular, $\Bcal$ is a bounded invertible operator on $\Hcal$. Thus, for any $\varphi\in \Hcal$, we have
    \[
        |\jbk{\psi_n, \varphi}_2|=\frac{|\jbk{\varphi_n, \Bcal[\varphi]}_1|}{\|\varphi_n\|_2} \approx |\jbk{\varphi_n, \Bcal[\varphi]}_1|\to 0
    \]
    as $n\to \infty$. 
    
    Since $\Acal_1-\Acal_2$ is compact and $\varphi_n$ weakly converges to $0$, there is a subsequence of $\varphi_n$, which we still denote by $\varphi_n$, such that $(\Acal_1-\Acal_2) [\varphi_n]$ converges to $0$. 
    We then see $\| \Acal_2 [\psi_n]-\lambda \psi_n\|_2 \to 0$ as $n \to \infty$ from the inequality
    \begin{align*}
        \| \Acal_2 [\psi_n]-\lambda \psi_n\|_2
        &\leq \| (\Acal_1-\Acal_2) [\psi_n]\|_2+\| \Acal_1 [\psi_n]-\lambda \psi_n\|_2 \\
        &\approx \| (\Acal_1-\Acal_2) [\varphi_n]\|_1+\| \Acal_1 [\varphi_n]-\lambda \varphi_n\|_1 .
    \end{align*}
    Thus $\psi_n$ is a Weyl sequence of $\lambda$ and $\Acal_2$.

    So far, we proved that $\sigma_\mathrm{ess}(\Acal_1, \Hcal_1)\subset \sigma_\mathrm{ess}(\Acal_2, \Hcal_2)$. By changing the roles of $\Acal_1$ and $\Acal_2$, we can prove the opposite inclusion relation.
\end{proof}

Now we prove Theorem \ref{theo_np_np0_ess_spec}.

\begin{proof}[Proof of Theorem \ref{theo_np_np0_ess_spec}]
    We already observed that the operator $\NP[0]^*$ and $\NP[\partial\Sigma]^*$ are self-adjoint on $\Hcal_0^*$ and $\Hcal_{\partial\Sigma}^*$, respectively, and that the norms on Hilbert spaces $\Hcal_0^*$ and $\Hcal_{\partial\Sigma}^*$ are equivalent. We claim that the operator $\NP[0]^*-\NP[\partial\Sigma]^*$ is compact on $H^{-1/2}(\partial\Sigma)$ in order to apply Lemma \ref{lemm_weyl_variant}. In fact, since $\Rcal^*$ is compact on $H^{-1/2}(\partial\Sigma)$ (Corollary \ref{coro_rs_lipschitz}) and the composition
    \begin{align*}
        f\in H^{-1/2}(\partial\Sigma)
        &\longrightarrow \SL[\partial\Sigma][f]\in H^{1/2}(\partial\Sigma) \longrightarrow \frac{v_p}{4\pi y}\SL[\partial\Sigma][f]\in L^2 (\partial\Sigma) \\
        &\longhookrightarrow \frac{v_p}{4\pi y}\SL[\partial\Sigma][f]\in H^{-1/2}(\partial\Sigma)
    \end{align*}
    is compact by the Rellich-Kondrachov embedding theorem, the decomposition \eqref{eq_a0_asymptotic_c1a} shows that $\NP[0]^*-\NP[\partial\Sigma]^*$ is compact on $H^{-1/2}(\partial\Sigma)$.

    Now we apply Lemma \ref{lemm_weyl_variant} and obtain the conclusion.
\end{proof}

Now we prove Theorem \ref{theo_essential_spectrum}.

\begin{proof}[Proof of Theorem \ref{theo_essential_spectrum}]
    According to Theorem \ref{theo_np_np0_ess_spec}, it suffices to prove the inclusion
    \begin{equation}
        \label{eq_nps_np3_ess_spec}
        \sigma_\mathrm{ess}(\NP[0]^*, \Hcal_0^*)
        \subset \sigma_\mathrm{ess}(\NP^*, H^{-1/2}(\partial\Omega)).
    \end{equation}

    Let $\Ucal: H^{-1/2}(\partial\Omega)\to H^{-1/2}(\partial\Sigma \times S^1)$ be the operator defined in \eqref{eq_defi_j}. Assume $\lambda\in \sigma_\mathrm{ess}(\NP[0]^*, \Hcal_0^*)$ and let $\{f_n\}_{n=1}^\infty$ be a Weyl sequence of $\lambda$ and $\NP[0]^*$. We show that $g_n:=\Ucal^{-1}[f_n]\in H^{-1/2}(\partial\Omega)$ yields a Weyl sequence of $\lambda$ and $\NP^*$. In fact, by Corollary \ref{coro_inner_product_decomposition} \ref{enum_inner_product_mode}, we have
    \[
        \| \Ucal^{-1}[f_n]\|_*=\|f_n\|_0=1.
    \]

    Let $h\in H^{-1/2}(\partial\Omega)$. It follows from \eqref{eq_J_unitary} and Proposition \ref{prop_single_layer_separation} that
    \begin{align*}
        \jbk{g_n, h}_*&= -\jbk{\SL \Ucal^{-1}[f_n], h}_{\partial\Omega} \\
        &= -\jbk{\Ucal^{-1}\SL[0][f_n], h}_{\partial\Omega}
        =-\jbk{\SL[0][f_n], \Ucal [h]}_{\partial\Sigma\times S^1} .
    \end{align*}
    Note that $\int_{S^1} \Ucal [h](\cdot, \varphi)\, \df \varphi \in H^{-1/2}(\partial\Sigma)$ and hence
    \begin{align*}
        -\jbk{\SL[0][f_n], \Ucal [h]}_{\partial\Sigma\times S^1} = \jbk{ f_n, \int_{S^1} \Ucal [h](\cdot, \varphi)\, \df \varphi }_0 \to 0
    \end{align*}
    as $n\to \infty$. Thus $\{g_n\}_{n=1}^\infty$ weakly converges to $0$ in $H^{-1/2}(\partial\Omega)$.

    By \eqref{eq_nps_separated} and Corollary \ref{coro_inner_product_decomposition} \ref{enum_inner_product_mode}, we have
    \begin{align*}
        \| (\NP^*-\lambda I)[g_n]\|_*^2
        = \| \Ucal^{-1}(\NP[0]^*-\lambda I)[f_n]\|_*^2
        =\| (\NP[0]^*-\lambda I)[f_n]\|_0^2 \to 0
    \end{align*}
    as $n\to \infty$. Thus, $\{g_n\}_{n=1}^\infty$ is a Weyl sequence of $\lambda$ and $\NP^*$. This implies $\lambda\in \sigma_\mathrm{ess}(\NP^*, H^{-1/2}(\partial\Omega))$.
\end{proof}

\appendix

\section{Proof of Proposition \ref{theo_ell_int_ae}}\label{sec_proof_elliptic_behavior}

As preliminaries, we introduce the digamma function 
\[
    \psi (z):=\frac{\Gamma^\prime (z)}{\Gamma (z)}
\]
where $\Gamma (z)$ is the Gamma function 
\[
    \Gamma (z):=\int_0^\infty t^{z-1}\e^{-t}\, \df t
\]
for $\Re z>0$. 

We introduce the power series 
\begin{equation}\label{eq_f}
    f(z):=\frac{2}{\pi}\sum_{n=1}^\infty \left(\frac{(2n-1)!!}{(2n)!!}\right)^2 \left(\psi (n+1)-\psi \left(n+\frac{1}{2}\right)\right) (-1)^nz^{2n-2}
\end{equation}
for $z\in \Cbb$ with $|z|<1$, which converges by virtue of the inequality 
\[
    \limsup_{n\to \infty} |\psi (n+a)|^{1/n}\leq 1
\]
for any $a\in \Cbb$, which is proved by the identity $\psi (z+1)=\psi (z)+1/z$ for any $z\in \Cbb\setminus \{0\}$ 
\cite[\href{https://dlmf.nist.gov/5.5.E2}{(5.5.2)}]{NIST}.

\begin{proof}[Proof of Proposition \ref{theo_ell_int_ae}]
    The identity \eqref{eq_full_expansion_first} with $f(z)$ defined by \eqref{eq_f} is obtained by combining the Gauss hypergeometric series representation of the elliptic integral \cite[\href{https://dlmf.nist.gov/19.5.E1}{(19.5.1)}]{NIST}, a formula of the Gauss hypergeometric series with the reciprocal argument \cite[\href{https://dlmf.nist.gov/15.8.E8}{(15.8.8)}]{NIST}, and special values and the reflection formula of the digamma function \cite[\href{https://dlmf.nist.gov/5.4.E12}{(5.4.12)}, \href{https://dlmf.nist.gov/5.4.E13}{(5.4.13)} and \href{https://dlmf.nist.gov/5.5.E4}{(5.5.4)}]{NIST}. 

    We define 
    \[
        g(z):=1+(1+z^2)\left(\frac{2\Kell (\iu z)/\pi-1}{z^2}-zf^\prime (z)-2f(z)\right)
    \]
    for $z\in \Cbb$ with $|z|<1$. The Taylor expansion of $\Kell (z)$ at $z=0$ implies that $z=0$ is a removable singularity of $g(z)$. Then, by the formula \cite[\href{https://dlmf.nist.gov/19.4.E1}{(19.4.1)}]{NIST}  
    \begin{equation}
        \label{eq_second_derivative}
        \Ell (z)=(1-z^2)\Kell (z)+z(1-z^2)\frac{\df}{\df z}\Kell (z)
    \end{equation}
    for $z\in \Cbb \setminus ((-\infty, -1]\cup [1, \infty))$ and \eqref{eq_full_expansion_first}, we obtain 
    \begin{align*}
        \Ell (\iu \delta^{-1})
        &=\frac{1+\delta^2}{\delta^2}\Kell (\iu \delta^{-1})+\frac{\iu (1+\delta^2)}{\delta^3}\left.\frac{\df \Kell}{\df z}\right|_{z=\iu \delta^{-1}} \\
        &=\frac{1+\delta^2}{\delta^2}\Kell (\iu \delta^{-1})-\frac{1+\delta^2}{\delta}\frac{\df}{\df \delta}[\Kell (\iu \delta^{-1})] \\
        &=\frac{2(1+\delta^2)\log \delta}{\pi}\frac{\df}{\df \delta}[\Kell (\iu \delta)]+\frac{1}{\delta}+\delta g(\delta). 
    \end{align*}
    We employ \eqref{eq_second_derivative} for $\df \Kell (\iu \delta)/\df \delta$ to obtain \eqref{eq_full_expansion_second}. 
\end{proof}

\section*{Acknowledgment}\addcontentsline{toc}{section}{\numberline{}Acknowledgment}
The authors express deep gratitude to Professor Yoshihisa Miyanishi for fruitful discussions, and to the referee for valuable comments and suggestions. 








\begin{thebibliography}{99}

\bibitem{AKSV99} M. S. Agranovich, B. Z. Katsenelenbaum, A. N. Sivov and N. N. Voitovich. \textit{Generalized method of eigenoscillations in diffraction theory}. Translated from the Russian manuscript by Vladimir Nazaikinskii. WILEY-VCH Verlag Berlin GmbH, Berlin,  1999.

\bibitem{ACKLM13} H. Ammari, G. Ciraolo, H. Kang, H. Lee and G. W. Milton. Spectral theory of a Neumann-Poincar\'e-type operator and analysis of cloaking due to anoumalous localized resonance, \textit{Arch. Ration. Mech. An.}, 208:667--692, 2013.

\bibitem{ACKLM14} H. Ammari, G. Ciraolo, H. Kang, H. Lee and G. W. Milton. Spectral theory of a Neumann-Poincar\'e-type operator and analysis of anomalous localized resonance II, \textit{Contemporary Math.}, 615:1--14, 2014.

\bibitem{AJKKM19} K. Ando, Y. Ji, H. Kang, D. Kawagoe and Y. Miyanishi. Spectral structure of the {N}eumann-{P}oincar\'{e} operator on tori, \textit{Ann. Inst. H. Poincar\'{e} C Anal. Non Lin\'{e}aire}, 36(7):1817--1828, 2019.
\bibitem{AK} K. Ando and H. Kang. Analysis of plasmon resonance on smooth domains using spectral properties of the {N}eumann-{P}oincar\'{e} operator. \textit{J. Math. Anal. Appl.}, 435(1):162--178, 2016.
\bibitem{AKMN21} K. Ando, H. Kang, Y. Miyanishi and
T. Nakazawa. Surface localization of plasmons in three dimensions and convexity, \textit{SIAM J. Appl. Math.}, 81(3):1020--1033, 2021.
\bibitem{Birman-Solomjak77} M. \v{S}. Birman and M. Z. Solomjak. Asymptotic behavior of the spectrum of pseudodifferential operators with anisotropically homogeneous symbols. \textit{Vestnik Leningrad. Univ.}, (13 Mat. Meh. Astronom. vyp. 3):13--21, 169, 1977. English translation, \textit{Vestnik Leningrad. Univ. Math.}, 10:237--247, 1982.
\bibitem{Delgado-Ruzhansky14} J. Delgado and M. Ruzhansky. Schatten classes on compact manifolds: kernel conditions. \textit{J. Funct. Anal.}, 267(3):772--798, 2014.

\bibitem{FKMdr} S. Fukushima, H. Kang and Y. Miyanishi. Decay rate of the eigenvalues of the {N}eumann-{P}oincar\'{e} operator. \textit{Potential Analysis}. Published online, 2023. DOI: 10.1007/s11118-023-10120-6.

\bibitem{Gilbarg-Trudinger01} D. Gilbarg and N. S. Trudinger. \textit{Elliptic partial differential equations of second order}. Classics in Mathematics. Reprint of the 1998 edition. Springer-Verlag, Berlin, 2001.

\bibitem{Gohberg-Krein69} I. C. Gohberg and M. G. Kre\u{\i}n. \textit{Introduction to the theory of linear nonselfadjoint operators}. Translations of Mathematical Monographs, Vol. 18. Translated from the Russian by A. Feinstein. American Mathematical Society, Providence, R.I., 1969.
\bibitem{Grubb14} G. Grubb. Spectral asymptotics for nonsmooth singular Green operators. \textit{Comm. Partial Differential Equations}, 39(3):530--573, 2014. With an appendix by H. Abels.

\bibitem{HP1} J. Helsing and K.-M. Perfekt.
On the Polarizability and Capacitance of the Cube,
\textit{Applied and Computational Harmonic Analysis}, 34:445--468, 2013.

\bibitem{HP2} J. Helsing and K.-M. Perfekt.
The spectra of harmonic layer potential operators on domains with rotationally symmetric conical points, \textit{J. Math. Pures Appl.}, 118:235--287, 2018.

\bibitem{Hormander85-1} L. H\"ormander. \textit{The Analysis of Linear Partial Differential Operators I}, Springer-Verlag Berlin Heidelberg, 1985. 

\bibitem{Hormander85-3} L. H\"ormander. \textit{The Analysis of Linear Partial Differential Operators III}. Springer-Verlag Berlin Heidelberg, 1985. 

\bibitem{JK} Y. Ji and H. Kang. A concavity condition for existence of a negative value  in Neumann-Poincar\'e spectrum in three dimensions, \textit{Proc. Amer. Math. Soc.}, 147:3431--3438, 2019.

\bibitem{Ji-Kang23MA} Y. Ji and H. Kang. Spectral properties of the {N}eumann-{P}oincar\'{e} operator on rotationally symmetric domains, \textit{Math. Ann.}, 387(1-2):1105--1123, 2023. 

\bibitem{KPS07} D. Khavinson, M. Putinar, and H. S. Shapiro. Poincar\'e's variational problem in potential theory. \textit{Arch. Ration. Mech. Anal.}, 185(1):143--184, 2007.

\bibitem{KLSW} R. V. Kohn, J. Lu, B. Schweizer and M. I. Weinstein. A variational perspective
on cloaking by anomalous localized resonance, \textit{Comm. Math. Phys.}, 328:1--27, 2014.

\bibitem{Kumano-go81} H. Kumano-go. \textit{Pseudodifferential operators}. Translated from the Japanese by the author, R\'{e}mi Vaillancourt and Michihiro Nagase, MIT Press, Cambridge, Mass.-London, 1981.

\bibitem{MM} R. C. McPhedran and G. W. Milton. A review of anomalous resonance, its associated cloaking, and superlensing, \textit{Comptes Rendus. Physique}, 21:409--423, 2020.


\bibitem{MN_06} G. W. Milton and N. A. P. Nicorovici. On the cloaking
effects associated with anomalous localized resonance, \textit{Proc. R.
Soc. A}, 462:3027--3059, 2006.

\bibitem{Miyanishi22} Y. Miyanishi. Weyl's law for the eigenvalues of the Neumann-Poincar\'e operators in three dimensions: Willmore energy and surface geometry. \textit{Adv. Math.}, 406: Paper No. 108547, 19, 2022.

\bibitem{Miyanishi-Rozenblum19} Y. Miyanishi and G. Rozenblum. Eigenvalues of the Neumann-Poincare operator in dimension 3: Weyl's law and geometry. \textit{Algebra i Analiz}, 31(2):248--268, 2019. Reprinted in \textit{St. Peterburg Math. J.}, 31(2):371--386, 2020.

\bibitem{Necas62} J. Ne\v{c}as. On domains of type $\mathfrak{N}$. \textit{Czechoslovak Math. J.}, 12(87):274--287, 1962.

\bibitem{NIST} F. W. J. Olver et al. \textit{NIST Digital Library of Mathematical Functions}. \url{https://dlmf.nist.gov/}, Release 1.1.12 of 2023-12-15. Retrieved in February 15th, 2024. 

\bibitem{Perfekt-Putinar17} K.-M. Perfekt and M. Putinar. The essential spectrum of the Neumann-Poincar\'{e} operator on a domain with corners. \textit{Arch. Ration. Mech. Anal.}, 223(2):1019--1033, 2017.

\bibitem{Ponge23} R. Ponge. Connes' integration and {W}eyl's laws. \textit{J. Noncommut. Geom.}, 17(2):719--767, 2023.
    
\bibitem{Verchota84} G. Verchota. Layer potentials and regularity for the {D}irichlet problem for {L}aplace's equation in {L}ipschitz domains, J. Funct. Anal. 59(3):572--611, 1984.

\bibitem{Vladimirov71} V. S. Vladimirov. \textit{Equations of mathematical physics}. Pure and Applied Mathematics 3, Marcel Dekker, Inc., New York, 1971. Translated from the Russian by A. Littlewood.

\bibitem{Zworski12} M. Zworski. \textit{Semiclassical analysis}. Graduate Studies in Mathematics 138, American Mathematical Society, Providence, RI, 2012.

\end{thebibliography}
\end{document}